\theoremstyle{plain}
\newtheorem{theorem}{Theorem}
\newtheorem{lemma}[theorem]{Lemma}
\newtheorem{proposition}[theorem]{Proposition}
\theoremstyle{definition}
\newtheorem{example}[theorem]{Example}
\newtheorem{remark}[theorem]{Remark}
\newtheorem*{remark*}{Remark}
\begin{document}
\title[Conditional limit theorems for ordered random walks]{Conditional limit theorems for ordered random walks}
\author[Denisov]{Denis Denisov}
\address{School of MACS, Heriot-Watt University, Edinburgh EH14 4AS, UK}
\email{denisov@ma.hw.ac.uk}

\author[Wachtel]{Vitali Wachtel}
\address{Mathematical Institute, University of Munich, Theresienstrasse 39, D--80333
Munich, Germany}
\email{wachtel@mathematik.uni-muenchen.de}
\begin{abstract}
In a recent paper of Eichelsbacher and K{\"o}nig (2008) the model of ordered random walks has been considered.
There it has been shown that, under certain moment conditions, one can construct a $k$-dimensional random walk conditioned
to stay in a strict order at all times. Moreover, they have shown that the rescaled random walk converges to the 
Dyson Brownian motion. In the present paper we find the optimal moment assumptions for the construction of the
conditional random walk and generalise the limit theorem for this conditional process. 
\end{abstract}


\keywords{Dyson's Brownian Motion, Doob $h$-transform, Weyl chamber}
\subjclass{Primary 60G50; Secondary 60G40, 60F17}
\maketitle

\section{Introduction, main results and discussion}

\subsection{Introduction}
A number of important results have been recently proved relating the limiting distributions 
of random matrix theory with certain other models. These models include the longest increasing 
subsequence, the last passage percolation, non-colliding particles, the tandem queues, random tilings, 
growth models and many others. A thorough review of these results can be found in \cite{K05}. 

Apparently it was Dyson who first established a connection between random matrix theory and non-colliding
particle systems. It was shown in his classical paper~\cite{Dy62} that the process of eigenvalues of the Gaussian Unitary Ensemble
of size $k\times k$ coincides in distribution with the $k$-dimensional diffusion, which can be represented as the evolution
of $k$ Brownian motions conditioned never to collide. Such conditional versions of random walks have attract a lot of attention in the recent past, see e.g. \cite{OY02,KOR02}. The approach in these papers is based on explicit formulas for nearest-neighbour random walks. 
However, it turns out that the results have a more general nature, that is, they remain valid for random walks with arbitrary jumps,
see \cite{BS06} and \cite{EK08}. The main motivation for the present work was to find minimal conditions, under which one can define 
multidimensional random walks conditioned never to collide.

Consider a random walk $S_n=(S_n^{(1)},\ldots,S_n^{(k)})$ on 
$\mathbf R^k$, where 
$$
S_n^{(j)}=\xi_1^{(j)}+\cdots+\xi_n^{(j)}, \quad j=1,\ldots, k,
$$
and $\{\xi^{(j)}_n,1\leq j\leq k,\ n\geq1\}$ is a family of independent and identically distributed random variables.
Let 
$$
W=\{x=(x^{(1)},\ldots,x^{(k)})\in\mathbf R^k: x^{(1)}<\ldots<x^{(k)}\}
$$
be the Weyl chamber.

In this paper we study the asymptotic behaviour of the random walk $S_n$  conditioned to stay in $W$.
Let $\tau_x$ be the exit time from the Weyl chamber of the random walk with starting point $x\in W$, that is,
$$
\tau_x=\inf\{n\ge 1: x+S_n\notin W\}.
$$

One can attribute two different meanings to the words 'random walk conditioned to stay in $W$.' On the one hand, the
statement could refer to the path $(S_0,S_1,\ldots,S_n)$ conditioned on $\{\tau_x>n\}$. On the other hand, one can 
construct a new Markov process, which never leaves $W$. There are two different ways of defining such a conditioned processes.
First, one can determine its finite dimensional distributions via the following limit
\begin{equation}
\label{dist.lim}
\mathbf {P}_x\left(\widehat{S}_i\in D_i,\,0\leq i\leq n\right)=
\lim_{m\to\infty}\mathbf P(x+S_i\in D_i,\,0\leq i\leq n | \tau_x>m ).
\end{equation}
Second, one can use an appropriate Doob $h$-transform. 
If there exists a function $h$ (which is usually called \emph{invariant function}) such that $h(x)>0$ for all $x\in W$ and 
\begin{equation}
\label{inv}
\mathbf E[h(x+S(1));\tau_x>1]=h(x),\quad x\in W,
\end{equation}
then one can make a {\it change of measure} 
$$
\mathbf {\widehat P}_x^{(h)}(S_n\in dy)=
\mathbf P(x+S_n\in dy,\tau_x>n)\frac{h(y)}{h(x)}.
$$
As a result, one obtains a random walk  $S_n$ under a new measure $\mathbf {\widehat P}_x^{(h)}$. This transformed random walk is
a Markov chain which lives on the state space $W$. 

To realise the first approach one needs to know the asymptotic behaviour of $\mathbf{P}(\tau_x>n)$.
And for the second approach one has to find a function satisfying (\ref{inv}). It turns out that
these two problems are closely related to each other: The invariant function reflects the dependence
of $\mathbf{P}(\tau_x>n)$ on the starting point $x$. Then both approaches give the same Markov chain.
For one-dimensional random walks conditioned to stay positive it was shown by Bertoin and Doney \cite{BD94}.
They proved that if the first moment of a random walk is finite, then the function 
$V(x)=x-\mathbf{E}(x+S_{\sigma_x})$ is invariant and that $\mathbf{P}(\sigma_x>n)\sim  CV(x)\mathbf{P}(\sigma_0>n)$,
where $\sigma_x=\min\{k\geq1:x+S_k\leq 0\}$. The analogous program for random walks in the Weyl chamber was carried out
by Eichelsbacher and K{\"o}nig \cite{EK08}. If we define the direct analogue of the invariant function used by
Bertoin and Doney as follows
\begin{equation}\label{defn.V}
 V(x)= \Delta(x)-\mathbf E\Delta(x+S_{\tau_x}),
\end{equation}
where $\Delta(x)$ denotes the Vandermonde determinant, that is,
$$
\Delta(x)=\prod_{1\leq i<j\leq k}(x^{(j)}-x^{(i)}),\quad x\in W.
$$
Then it was shown in \cite{EK08} 
that if $\mathbf{E}|\xi|^{r_k}<\infty$ with some $r_k>ck^3$, then 
it can be concluded that $V$ is a finite 
and strictly positive invariant function. Moreover, the authors determined the 
behaviour of $\mathbf{P}(\tau_x>n)$ and studied some asymptotic properties
of the conditioned random walk. They also posed a question about minimal moment 
assumptions under which one can construct a conditioned random walk by using $V$. 
In the present paper we answer this question. We prove that the 
results of \cite{EK08} remain valid under the following conditions:
\begin{itemize}
\item {\it Centering assumption:} We assume that $\mathbf E\xi=0$. 
\item {\it Moment assumption:} We assume that $\mathbf E|\xi|^{\alpha}<\infty$ with $\alpha=k-1$ if $k>3$ and some $\alpha>2$ if $k=3$.
Furthermore, we shall assume, without loss of generality, that $\mathbf{E}\xi^2=1$.
\end{itemize}
It is obvious, that this moment condition is the minimal one for the finiteness of the function $V$
defined by (\ref{defn.V}). Indeed, from the definition of $\Delta$ it is not difficult to see that the 
finiteness of the $(k-1)$-th moment of $\xi$ is necessary for the finiteness of $\Delta(x+S_1)$. 
Thus, this moment condition is also necessary for the integrability of $\Delta(x+S_{\tau_x})$, which
is equivalent to the finiteness of $V$. In other words, if $\mathbf{E}|\xi|^{k-1}=\infty$, then one has 
to define the invariant function in a different way.
Moreover, we give an example, which shows that if the moment assumption does not hold,
then $\mathbf{P}(\tau_x>n)$ has a different rate of divergence.
\subsection{On the tail of $\tau_x$}
Here is our \emph{main} result:

\begin{theorem}\label{T}
Assume that $k\geq3$ and let the centering as well as the moment assumption hold.
Then the function $V$ is finite and strictly positive. Moreover, as $n\to\infty$,
\begin{equation}\label{eq.asym.tau}
 \mathbf P(\tau_x>n)\sim\varkappa V(x) n^{-k(k-1)/4},\quad x\in W,
\end{equation}
where $\varkappa$ is an absolute constant.
\end{theorem}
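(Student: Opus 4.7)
The plan is to follow the Bertoin--Doney template realised in the Weyl-chamber setting by Eichelsbacher--K\"onig \cite{EK08}, but refined so that only the minimal moment $\mathbf{E}|\xi|^{k-1}<\infty$ is needed. The backbone of the argument is the fact that the Vandermonde $\Delta$ is a harmonic polynomial of degree $k(k-1)/2$, so it plays for the random walk in $W$ the same role that $x$ plays for the one-dimensional walk in $(0,\infty)$: $\Delta(x+S_n)$ is, up to higher-order Taylor terms, a martingale. On the analytic side the target exponent $k(k-1)/4$ comes from Grabiner's formula for Brownian motion in $W$, according to which $\mathbf{P}(\tau_x^{\text{BM}}>n)\sim c\,\Delta(x)\,n^{-k(k-1)/4}$.

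I would first analyse the sequence $u_n(x):=\mathbf{E}[\Delta(x+S_n);\tau_x>n]$. Optional stopping applied to $\Delta(x+S_{n\wedge\tau_x})$ (which is a martingale at the level of the Laplacian but only an approximate one for the random walk) should give $u_n(x)=\Delta(x)-\mathbf{E}[\Delta(x+S_{\tau_x});\tau_x\leq n]+R_n(x)$, where $R_n(x)$ collects errors coming from moments of $\xi$ of order $\geq 3$. The moment assumption ensures that $\Delta(x+S_{\tau_x})$ is integrable (at $\tau_x$ two adjacent coordinates cross, so one factor in $\Delta$ is controlled by $|\xi_{\tau_x}|$ while the remaining $k(k-1)/2-1$ factors require at most the $(k-1)$-th moment). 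A uniform-integrability argument then forces $R_n\to 0$ and $\mathbf{E}[\Delta(x+S_{\tau_x});\tau_x\leq n]\to\mathbf{E}\Delta(x+S_{\tau_x})$, simultaneously proving finiteness of $V$ and identifying $V(x)=\lim_n u_n(x)$. Strict positivity would be obtained via a minimum principle: $V$ is non-negative, harmonic for the killed walk, and strictly positive somewhere, hence strictly positive throughout $W$.

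For the tail asymptotic, I would combine the identification $V(x)=\lim_n u_n(x)$ with a strong approximation. Split $[0,n]$ at a scale $m=m_n$ with $m\to\infty$ and $m=o(n)$. During $[0,m]$ the walk spreads to scale $\sqrt m$, and the computation of the previous step gives $\mathbf{E}[\Delta(x+S_m);\tau_x>m]\to V(x)$. During $[m,n]$, conditionally on $S_m$, one couples the walk to a Brownian motion via KMT; on $\{\tau_x>m\}$ Grabiner's formula yields $\mathbf{P}(\tau_x>n\mid \mathcal{F}_m)\sim \varkappa\,\Delta(x+S_m)\,n^{-k(k-1)/4}$. Taking expectations converts the right-hand side into $\varkappa V(x)\,n^{-k(k-1)/4}$, which is the asserted asymptotic with $\varkappa$ equal to Grabiner's constant.

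The principal technical obstacle is that the standard KMT coupling requires finite exponential moments, whereas we only have $\mathbf{E}|\xi|^{k-1}<\infty$ (or $\alpha>2$ for $k=3$). The remedy is to perform the coupling on a truncated walk: increments of modulus exceeding $n^{\beta}$ with a carefully chosen $\beta<1/2$ are removed, the truncated bulk is coupled to Brownian motion, and the contribution of the removed large jumps to $\{\tau_x>n\}$ is bounded using the $(k-1)$-th moment alone. Balancing the truncation scale against the coupling error and against the target decay $n^{-k(k-1)/4}$ is what forces the specific moment threshold $k-1$, and it is this balance --- rather than any single estimate --- that constitutes the hard part of the proof.
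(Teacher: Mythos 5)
Your high-level outline (martingale structure of $\Delta(x+S_n)$, integrability of $\Delta(x+S_{\tau_x})$ to define $V$, strong approximation plus Grabiner's Brownian estimate for the tail) matches the paper, but the proposal glosses over exactly the step the paper identifies as hardest. You claim that at $\tau_x$ "one factor in $\Delta$ is controlled by $|\xi_{\tau_x}|$ while the remaining $k(k-1)/2-1$ factors require at most the $(k-1)$-th moment," but those remaining factors involve the random position $S_{\tau_x}$, which can be large, and bounding their contribution jointly with the crossing factor is \emph{not} a one-line moment computation — applying H\"older here is precisely what costs the superfluous moments in \cite{EK08}. The paper's route is to replace $\tau_x$ by the stopping time $T_x=\min\{n\geq1:\Delta(x+S_n)\leq 0\}\geq\tau_x$. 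The payoff, which your sketch does not use and cannot easily substitute for, is that $\Delta(x+S_n)\mathbf{1}\{T_x>n\}$ is a genuine submartingale (because $\Delta(x+S_{T_x})\leq 0$). That monotone structure drives an iterative bound: introduce $\nu_n$, the first entrance time into the spread-out set $W_{n,\varepsilon}$; on $W_{n,\varepsilon}$ one can afford an elementary bound on $\mathbf{E}[\Delta(x+S_T);T\leq n]$ because $\Delta(x)$ dominates all lower-degree terms there; and iterating the resulting recursion from $n$ down to $n^{(1-\varepsilon)^m}$ yields the uniform bound $\mathbf{E}[\Delta(x+S_n);T>n]\leq C\Delta_1(x)$, from which integrability of $\Delta(x+S_{\tau_x})$ follows. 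Without this, your appeal to "a uniform-integrability argument" is a gap.

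The positivity argument has a related gap: a minimum principle needs $V$ strictly positive \emph{somewhere}. The paper obtains this from $V(x)\sim\Delta(x)$ as all gaps diverge, which in turn requires showing $\mathbf{E}[\Delta(x+S_{\tau_x});T_x>\tau_x]=o(\Delta(x))$; that estimate rests on the \emph{supermartingale} property of $V^{(T)}(x+S_n)\mathbf{1}\{\tau_x>n\}$ — again built on the $T_x$ construction you don't have.

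Two smaller inaccuracies: $\Delta(x+S_n)$ is an exact martingale (the Vandermonde is discrete-harmonic for i.i.d. coordinates), so there is no Taylor remainder $R_n$ to track; and the coupling actually used is Major's polynomial-moment KMT variant, which already applies under $\mathbf{E}|\xi|^{2+\delta}<\infty$ with a polynomial error rate, so your proposed truncation-then-couple workaround is unnecessary.
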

All the claims in the theorem have been proved in \cite{EK08} under more restrictive assumptions: 
As we have already mentioned, the authors have assumed that $\mathbf{E}|\xi|^{r_k}<\infty$ with 
some $r_k$ such that $r_k\geq ck^3$, $c>0$. Furthermore, they needed some additional regularity 
conditions, which ensure the possibility to use an asymptotic expansion in the local central 
limit theorem. As our result shows, these regularity conditions are superfluous and one needs $k-1$ 
moments only.

Under the condition that $\xi^{(1)},\ldots, \xi^{(k)}$ are identically distributed, the centering assumption
does not restrict the generality: One has only to change to the random walk $S_n-n\mathbf{E}\xi$. But if the drifts 
are allowed to be unequal, then the asymptotic behaviour of $\tau_x$ and that of the conditioned random walk
might be different, see \cite{PR08} for the case of the Brownian motion. 

We now turn to the discussion of the moment condition in the theorem. We start with the following example.
\begin{example}
Assume that $k\geq4$ and consider the random walk, which satisfies
\begin{equation}\label{Ex}
\mathbf{P}(\xi\geq u)\sim u^{-\alpha}\quad\text{as }u\to\infty,
 \end{equation}
with some $\alpha\in(k-2,k-1)$. Then,
\begin{eqnarray*}
\mathbf P(\tau_x>n) \ge 
\mathbf P\left(\xi^{(k)}_1>n^{1/2+\varepsilon}, \min_{1\le i\le n} S^{(k)}_i>0.5n^{1/2+\varepsilon}\right)\\
\times
\mathbf P\left(\max_{1\le i\le n} S_{i}^{(k-1)}\le 0.5n^{1/2+\varepsilon},
\tilde{\tau}_{x}>n\right),
\end{eqnarray*}
where $\tilde{\tau}_x$ is the time of the first collision in the random walk $(S_n^{(1)},\ldots,S_n^{(k-1)})$.
Now, by the Central Limit Theorem,
\begin{align*}
&\mathbf P\left(\xi^{(k)}_1>n^{1/2+\varepsilon}, \min_{1\le i\le n} S^{(k)}_i>0.5n^{1/2+\varepsilon}\right)\\
&\hspace{1cm}\ge\mathbf P\left(\xi^{(k)}_1>n^{1/2+\varepsilon}\right) 
\mathbf P\left(\min_{1\le i\le n} (S^{(k)}_i-\xi_1^{(k)})>-0.5n^{1/2+\varepsilon}\right)
\sim n^{-\alpha(1/2+\varepsilon)}.
\end{align*}
The CLT because is applicable due to the condition $\alpha>k-2$, which implies the finiteness of the variance.

For the second term in the product we need to analyse $(k-1)$ random walks under the condition $\mathbf E|\xi|^{k-2+\varepsilon}<\infty$. 
Using Theorem~\ref{T}, we have
$$
\mathbf P(\tilde{\tau}_{x}>n)\sim \tilde{V}(x)n^{-(k-1)(k-2)/4}. 
$$
Since $S_n$ is of order $\sqrt{n}$ on the event $\{\tilde{\tau}_{x}>n\}$, we have
\begin{eqnarray*}
\mathbf P\left(\max_{1\le i\le n} S_{i}^{(k-1)}\le 0.5n^{1/2+\varepsilon},\tilde{\tau}_{x}>n\right)\sim
\mathbf P\left(\tilde{\tau}_{x}>n\right)\sim 
\tilde{V}(x)n^{-(k-1)(k-2)/4}. 
\end{eqnarray*}

As a result the following estimate holds true for sufficiently small 
$\varepsilon$,
\begin{eqnarray*}
\mathbf P(\tau_x>n) \ge C(x) n^{-(k-1)(k-2)/4}n^{-\alpha(1/2+\varepsilon)}.
\end{eqnarray*}
The right hand side of this inequality decreases slower than $n^{-k(k-1)/4}$ for all sufficiently small $\varepsilon$.

Moreover, using the same heuristic arguments, one can find a similar lower bound in case 
(\ref{Ex}) holds with $\alpha\in(k-j-1,k-j)$, $j\leq k-3$:
\begin{eqnarray*}
\mathbf P(\tau_x>n) \ge C(x) n^{-(k-j)(k-j-1)/4}n^{-\alpha j(1/2+\varepsilon)}.
\end{eqnarray*}

We believe that the lower bounds constructed above are quite precise, and we conjecture that
$$
\mathbf{P}(\tau_x>n)\sim U(x)n^{-(k-j)(k-j-1)/4-\alpha j/2}
$$
in case (\ref{Ex}) holds.
\hfill$\diamond$
\end{example}

It is clear that $\mathbf E|\xi|^{k-1}<\infty$ is necessary for the finiteness of $V$.
Furthermore, the example shows that this condition is almost necessary for the validity of (\ref{eq.asym.tau}):
One can not obtain the relation $\mathbf{P}(\tau_x>n)\sim C(x)n^{-k(k-1)/4}$ assuming that
$\mathbf E|\xi|^{k-1-\varepsilon}<\infty$ with some $\varepsilon>0$. 

If we have two random walks, i.e. $k=2 $, then $\tau_x$ is the 
exit time from $(0,\infty)$ of the random walk $Z_n:=(x^{(2)}-x^{(1)})+(S_n^{(2)}-S_n^{(1)})$. 
It is well known that, for symmetrically distributed random walks, $\mathbf{E}Z_{\tau_x}<\infty$ if and only if $\mathbf{E}(\xi_1^{(2)}-\xi_1^{(1)})^2<\infty$. However, the existence of $\mathbf{E}Z_{\tau_x}$
is not necessary for the relation $\mathbf{P}(\tau_x>n)\sim C(x)n^{-1/2}$, which holds 
for all symmetric random walks. This is contary to the high-dimensional case ($k\geq4$), where the 
integrability of $\Delta(x+S_{\tau_x})$ and the rate $n^{-k(k-1)/4}$ are quite close to each other.

In case we have three random walks our moment condition is not optimal. We think that the existence of
the variance is sufficient for the integrability of $\Delta(x+S_{\tau_x})$. But our approach requires more than two moments.
Furthermore, we conjecture that, as in the case $k=2$, the tail of the distribution of $\tau_x$ is of order $n^{-3/2}$ for
$\it{all}$ random walks.
\subsection{Scaling limits of conditioned random walks}
Theorem \ref{T} allows us to construct the conditioned random walk via the distributional limit (\ref{dist.lim}).
In fact, if (\ref{eq.asym.tau}) is used, we obtain, as $m\to\infty$,
\begin{align*}
\mathbf{P}(x+S_n\in D|\tau_x>m)&=\frac{1}{\mathbf{P}(\tau_x>m)}\int_D\mathbf{P}(x+S_n\in dy)\mathbf{P}(\tau_y>m-n)\\
&\to\frac{1}{V(x)}\int_D\mathbf{P}(x+S_n\in dy)V(y).
\end{align*}
But this means that the distribution of $\widehat{S}_n$ is given by the Doob transform with function $V$.
(This transformation is possible, because $V$ is well-defined, strictly positive on $W$ and satisfies
$\mathbf{E}[V(x+S_1);\tau_x>1]=V(x)$.)
In other words, both ways of construction described above give the same process.

We now turn to the asymptotic behaviour of $\widehat{S}_n$. To state our results we introduce the limit process.
For the $k$-dimensional Brownian motion with starting point $x\in W$ one can change the measure using the Vandermonde
determinant:
$$
\mathbf {\widehat P}_x^{(\Delta)}(B_t\in dy)=
\mathbf P(x+B_t\in dy )\frac{\Delta(y)}{\Delta(x)}.
$$
The corresponding process is called Dyson's Brownian motion. Furthermore, one can define 
Dyson's Brownian motion with starting point $0$ via the weak limit of $\mathbf {\widehat P}_x^{(\Delta)}$,
for details see Section 4 of O'Connell and Yor \cite{OY02}. We will denote the corresponding probability measure 
as $\mathbf {\widehat P}_0^{(\Delta)}$.

\begin{theorem}\label{T2}
If $k\geq3$ and the centering as well as the moment assumption are valid, then
\begin{equation}\label{eq.cond.dist}
\mathbf P\left(\frac{x+S_n}{\sqrt n} \in \cdot \Big| \tau_x>n\right)\to\mu\quad\text{weakly},
\end{equation}
where $\mu$ is the probability measure on $W$ with density proportional to $\Delta(y)e^{-|y|^2/2}$.\\ 
Furthermore, the process $X^n(t)=\frac{S_{[nt]}}{\sqrt n}$  under the probability measure 
$\mathbf{\widehat P}^{(V)}_{x\sqrt n},x \in W$ converges weakly to the Dyson Brownian motion under the measure
$\mathbf{\widehat P}^{(\Delta)}_{x}$. Finally, the process $X^n(t)=\frac{S_{[nt]}}{\sqrt n}$  under the probability measure 
$\mathbf{\widehat P}^{(V)}_{x},x \in W$ converges weakly to the Dyson Brownian motion under the measure
$\mathbf{\widehat P}^{(\Delta)}_{0}$.
\end{theorem}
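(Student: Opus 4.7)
The plan is to derive all three assertions of Theorem~\ref{T2} from Theorem~\ref{T} together with two supporting ingredients: a local refinement of Theorem~\ref{T},
\begin{equation*}
\mathbf P\bigl(x+S_n\in \sqrt n\,\mathrm dz,\,\tau_x>n\bigr)\sim V(x)\,K\,n^{-k(k-1)/4-k/2}\,\Delta(z)\,e^{-|z|^2/2}\,\mathrm dz
\end{equation*}
uniformly on compact subsets of $W$, where $K$ is the universal constant with $\varkappa=K\int_W\Delta(w)e^{-|w|^2/2}\,\mathrm dw$, so that integration over $z\in W$ returns the asymptotic of Theorem~\ref{T}, and an asymptotic $V(y)\sim C_0\,\Delta(y)$ for $y$ moving to infinity in $W$ with all coordinate gaps diverging. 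The second ingredient is obtained from the defining identity $V(y)=\Delta(y)-\mathbf E\Delta(y+S_{\tau_y})$ by showing $\mathbf E\Delta(y+S_{\tau_y})=o(\Delta(y))$ in that regime; the first is a local central limit theorem for the killed walk, naturally proved alongside Theorem~\ref{T} itself.

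Claim~(i) then follows at once from the ratio
\begin{equation*}
\mathbf P\bigl((x+S_n)/\sqrt n\in\mathrm dz\,\big|\,\tau_x>n\bigr)=\frac{(\sqrt n)^k\,\mathbf P(x+S_n\in\sqrt n\,\mathrm dz,\tau_x>n)}{\mathbf P(\tau_x>n)},
\end{equation*}
whose limit is $K\Delta(z)e^{-|z|^2/2}/\varkappa$, the normalised density of $\mu$. For claims~(ii) and~(iii) I verify convergence of finite-dimensional distributions and tightness separately. Under $\mathbf{\widehat P}^{(V)}_{x\sqrt n}$ the joint density of $(X^n(t_1),\dots,X^n(t_r))$ is a telescoping product of Doob kernels $p^{\mathrm{kill}}_{n(t_{i+1}-t_i)}(u,\mathrm dv)\,V(v)/V(u)$; the local limit theorem at scale $\sqrt n$ converts each $p^{\mathrm{kill}}$ into the killed-Brownian transition kernel $q_{t_{i+1}-t_i}(u/\sqrt n,v/\sqrt n)$, and by the $V$-asymptotic the factor $V(\sqrt n z)/V(\sqrt n y)\to\Delta(z)/\Delta(y)$ deep in $W$, so the product collapses to the Dyson Brownian motion kernel $q_t(y,z)\Delta(z)/\Delta(y)$. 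Tightness follows from absolute continuity of $\mathbf{\widehat P}^{(V)}$ with respect to the unconditioned walk, Donsker's theorem, and a moment bound $V(y)/V(x\sqrt n)\lesssim\Delta(y)/\Delta(x\sqrt n)$ on the Radon--Nikodym derivative. For claim~(iii) the initial point $x/\sqrt n\to 0$; fixing a small $\varepsilon>0$ and applying a Doob-reweighted version of claim~(i), the law of $X^n(\varepsilon)$ under $\mathbf{\widehat P}^{(V)}_x$ converges to the entrance law of the Dyson Brownian motion at time $\varepsilon$ from $0$ (density proportional to $\Delta(z)^2 e^{-|z|^2/(2\varepsilon)}$); the argument for claim~(ii) propagates to $t\in[\varepsilon,1]$, and $\varepsilon\downarrow 0$ completes the proof, using that Dyson Brownian motion started from $0$ is continuous at $t=0$.

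The main obstacle is the $V$-asymptotic $V(y)\sim C_0\Delta(y)$ for $y$ moving to infinity with all coordinate gaps diverging: it quantifies $\mathbf E\Delta(y+S_{\tau_y})=o(\Delta(y))$ and requires careful control on the overshoot distribution of the walk at $\partial W$, which is precisely where the sharp $(k-1)$-st moment condition enters; crucially it must yield a single constant $C_0$ independent of the approach direction, so that the ratios $V(\sqrt n z)/V(\sqrt n y)$ converge to $\Delta(z)/\Delta(y)$. A secondary delicate point is tightness under the Doob transform near $\partial W$, handled by a quantitative lower bound on $V$ close to the boundary combined with standard exit estimates for the transformed walk.
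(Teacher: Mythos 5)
Your route for the functional convergence differs genuinely from the paper's. The paper does not go through finite-dimensional distributions plus tightness at all: it couples the random walk with a Brownian motion \`a la Koml\'os--Major--Tusn\'ady (Lemma~\ref{lem4}), shows that for a bounded uniformly continuous $f$ on $C[0,1]$ the functional $f(X^n)$ agrees with $f(B^n)$ up to $O(n^{-a})$ on the coupling event, and then compares the two Doob transforms directly using the monotonicity of $V$ (shifting the starting point by $\pm 2(j-1)n^{1/2-a}$) together with $V(y)\sim\Delta(y)$ from Proposition~\ref{prop1}(d). Since convergence of $\mathbf E f(X^n)$ for all such $f$ is already weak convergence, no separate tightness argument or local limit theorem for the transition kernels is needed. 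Your fdd-plus-tightness scheme is the classical alternative and would also work in principle; it buys a more modular argument but at the cost of having to prove a local CLT for the killed kernel uniformly over starting points at scale $\sqrt n$ and a tightness estimate under the transformed measure, both of which the coupling sidesteps. Your treatments of (i) and (iii) are essentially the paper's: (i) is the first lemma of Section~5 (decomposition at the entrance time $\nu_n$ into $W_{n,\varepsilon}$, the killed-Brownian density asymptotics, and the convergence $\mathbf E[\Delta(x+S_{\nu_n});\tau_x>\nu_n,\nu_n\le n^{1-\varepsilon}]\to V(x)$), and (iii) is handled in the paper by decomposing at $\nu_n$ and invoking O'Connell--Yor for the Brownian entrance law, which parallels your $\varepsilon\downarrow0$ argument. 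Note also that Proposition~\ref{prop1}(d) gives your constant $C_0=1$.

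The one genuine gap is in your tightness step, and you have misdiagnosed where the difficulty sits. Near $\partial W$ the density $V(y)/V(x\sqrt n)$ is \emph{small}, so the boundary only helps; no ``quantitative lower bound on $V$ close to the boundary'' is needed. The real problem is that the Radon--Nikodym derivative $V(S_n)\mathbf 1\{\tau>n\}/V(x\sqrt n)\le C\Delta_1(x\sqrt n+S_n)/V(x\sqrt n)$ is unbounded for large $|S_n|$, so ``absolute continuity plus Donsker'' does not transfer a small unconditioned probability into a small transformed probability. One must show that $\Delta_1(x\sqrt n+S_n)/n^{k(k-1)/4}$ is uniformly integrable, i.e.\ that $\mathbf E[\Delta_1(x\sqrt n+S_n);|S_n|>R\sqrt n]=o(n^{k(k-1)/4})$ uniformly in $n$ as $R\to\infty$. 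Under only $\mathbf E|\xi|^{k-1}<\infty$ a H\"older bound fails (this is exactly the superfluous-moment trap the paper is designed to avoid); one has to expand $\Delta_1$ into monomials in which each coordinate appears at most $k-1$ times, factor by independence, and control the tails via Fuk--Nagaev, as in Lemma~\ref{lem0} and in the estimate of $E_2$ at the end of the paper's Lemma~\ref{lem.weak.convergence}. Without this ingredient your tightness claim, and hence claims (ii) and (iii), are not established under the stated moment assumption.
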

Relation (\ref{eq.cond.dist}) and the convergence of the rescaled process with starting point $x\sqrt{n}$ were proven in
\cite{EK08} under more restrictive conditions. Convergence towards $\mathbf{\widehat P}^{(\Delta)}_{0}$ was proven
for nearest-neighbour random walks, see \cite{OY02} and \cite{Sch09}. A comprehensive treatment of the case $k=2$ can be found
in \cite{BD06}.

One can guess that the convergence towards Dyson's Brownian motion holds even if we have finite variance only. However, it is not clear
how to define an invariant function in that case.

\subsection{Description of the approach}
The proof of finiteness and positivity of the 
function $V$ is the most difficult part of the paper. To derive these properties
of $V$ we use martingale methods. It is well known that $\Delta(x+S_n)$ is a martingale.
And in the case of a nearest-neighbour random walk, or in the case of the Brownian motion,
we can define $\tau_x$ as the first time of $\Delta(x+S_n)$ being non-positive. But in
general it could happen that $\Delta(x+S_{\tau_x})>0$. In other words, the martingale
$\Delta(x+S_n)$ does not 'feel' the stopping time $\tau_x$. So the stopping 
time $T_x=\min\{k\geq1:\Delta(x+S_k)\leq0\}$ seems to be more natural 
for the martingale $\Delta(x+S_n)$. Moreover, it helps us to obtain the desired properties
of $V$. We first show that $\Delta(x+S_{T_x})$ is integrable, which yields the integrability
of $\Delta(x+S_{\tau_x})$, see Subsection \ref{sect.integrability}. Furthermore, 
it follows from the integrability of $\Delta(x+S_{T_x})$ that the function 
$V^{(T)}(x)=\lim_{n\to\infty}\mathbf{E}\{\Delta(x+S_n),T_x>n\}$ is well defined 
on the set $\{x:\Delta(x)>0\}$. To show that the function $V$ is strictly positive, 
we use the interesting observation that the sequence $V^{(T)}(x+S_n){\bf 1}\{\tau_x>n\}$ 
is a supermartingale, see Subsection 2.2. 

It is worth mentioning that the detailed 
analysis of the martingale properties of the random walk $S_n$ allows one to keep the 
minimal moment conditions for positivity and finiteness of $V$. The authors of
\cite{EK08} used the H{\"o}lder inequality at many places in their proof. 
This explains the superfluous moment condition in their paper.

To prove the asymptotic relations in our theorems we use a version of the 
Komlos-Major-Tusnady coupling proposed in \cite{M76}, see Section 3.
A similar coupling has been used in \cite{BM05} and \cite{BS06}.
In order to have a good control over the quality of the Gaussian 
approximation we need more than two moments of the random walk. 
This fact explains partially why we required the finiteness of 
$\mathbf{E}|\xi|^{2+\delta}<\infty$ in the case $k=3$.
\section{Finiteness and positivity of $V$}\label{sect.V.is.good}
The main purpose of the present section is to prove the following statement.
\begin{proposition}\label{prop1}
The function $V$ has the following properties:
\begin{itemize}
\item[(a)] $V(x)=\lim_{n\to\infty}\mathbf{E}[\Delta(x+S_n);\tau_x>n]$; 
\item[(b)] $V$ is monotone, i.e. if $x^{(j)}-x^{(j-1)}\leq y^{(j)}-y^{(j-1)}$ for all $2\leq j\leq k$, then
$V(x)\leq V(y)$;
\item[(c)] $V(x)\leq c\Delta_1(x)$ for all $x\in W$, where $\Delta_t(x)=\prod_{1\le i<j\le k}\left(t+|x^{(j)}-x^{(i)}|\right)$;
\item[(d)] $V(x)\sim \Delta(x)$ provided that $\displaystyle \min_{2\leq j\leq k}(x^{(j)}-x^{(j-1)})\to\infty$;
\item[(e)] $V(x)>0$ for all $x\in W$.
\end{itemize}
\end{proposition}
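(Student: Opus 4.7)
Across all five parts the key input is that $\Delta(x+S_n)$ is a martingale (which follows from $\Delta$ being a Laplace-harmonic polynomial together with the centering, unit-variance and $(k-1)$-moment assumptions), combined with the integrability of $\Delta(x+S_{\tau_x})$ proved via the auxiliary stopping time $T_x=\min\{k\geq1:\Delta(x+S_k)\leq0\}$ in Subsection~\ref{sect.integrability}. Part~(a) then drops out by applying optional sampling to the martingale at the bounded stopping time $n\wedge\tau_x$:
\[
\Delta(x)=\mathbf{E}[\Delta(x+S_n);\tau_x>n]+\mathbf{E}[\Delta(x+S_{\tau_x});\tau_x\leq n],
\]
and dominated convergence applied to the second term (using the integrability just recalled) identifies the limit of the first term as $V(x)$.

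Part~(b) is a coupling argument: the hypothesis telescopes to $y^{(j)}-y^{(i)}\geq x^{(j)}-x^{(i)}$ for all $i<j$, so on a common probability space one has $\{\tau_x>n\}\subset\{\tau_y>n\}$ and $\Delta(x+S_n)\leq\Delta(y+S_n)$ there, whence the approximants from~(a) are pointwise ordered. For~(c) I would use the elementary inequality $|a+b|\leq(1+|a|)(1+|b|)$ to extract a factor of $\Delta_1(x)$ from each pair in $\Delta(x+S_{\tau_x})$, leaving to bound $\mathbf{E}\prod_{i<j}(1+|S_{\tau_x}^{(j)}-S_{\tau_x}^{(i)}|)$ uniformly in $x$ by means of the overshoot estimates of Subsection~\ref{sect.integrability}. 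Part~(d) then refines~(c): since $\mathbf{P}(\tau_x\leq N)\to0$ as all gaps of $x$ diverge (the walk needs of order $\min_j(x^{(j)}-x^{(j-1)})^2$ steps to bring two adjacent coordinates together), splitting the expectation on $\{\tau_x\leq N\}$ and $\{N<\tau_x<\infty\}$ and choosing $N$ to grow slowly shows $\mathbf{E}\Delta(x+S_{\tau_x})=o(\Delta(x))$, which combined with~(a) yields $V(x)\sim\Delta(x)$.

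The main obstacle is~(e). By~(c), $V$ is finite on $W$, and the strong Markov property applied to the representation in~(a) then shows $V$ is harmonic: $V(y)=\mathbf{E}[V(y+S_N);\tau_y>N]$ for every $N\geq1$. Since~(d) already gives $V(y^*)>0$ for $y^*$ with sufficiently large gaps, it would suffice to show that from any $y\in W$ the walk reaches such a point while staying in $W$ with positive probability. This is where the supermartingale idea flagged in the introduction enters: defining $V^{(T)}(x)=\lim_n\mathbf{E}[\Delta(x+S_n);T_x>n]$ on $\{\Delta>0\}$, optional sampling at $n\wedge T_x$ combined with the sign of $\Delta(x+S_{T_x})$ yields the pointwise lower bound $V^{(T)}(x)\geq\Delta(x)>0$ there, and the sequence $V^{(T)}(x+S_n)\mathbf{1}\{\tau_x>n\}$ is a nonnegative supermartingale. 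Combining this supermartingale inequality with the harmonicity of $V$ and the comparison $V\leq V^{(T)}$ on $W$ (from $\{\tau_x>n\}\subset\{T_x>n\}$) should produce the required strictly positive lower bound for $V$. The delicacy is that $\tau_x$ and $T_x$ disagree precisely when a single step produces an even number of simultaneous inversions, and the whole argument must be run under only $(k-1)$ moments --- no H\"older slack to spare --- which is exactly the reason the moment requirement of \cite{EK08} was so much stronger.
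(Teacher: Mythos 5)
Parts~(a) and~(b) match the paper's argument precisely: optional sampling at $n\wedge\tau_x$ together with dominated convergence (once $\Delta(x+S_{\tau_x})$ is known to be integrable) for~(a), and the pointwise chain $\Delta(x+S_n)\mathbf 1\{\tau_x>n\}\le\Delta(y+S_n)\mathbf 1\{\tau_x>n\}\le\Delta(y+S_n)\mathbf 1\{\tau_y>n\}$ followed by~(a) for~(b).

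Your route to~(c), however, does not close. You propose $\mathbf E|\Delta(x+S_{\tau_x})|\le\Delta_1(x)\,\mathbf E\prod_{i<j}\bigl(1+|S_{\tau_x}^{(j)}-S_{\tau_x}^{(i)}|\bigr)$ and then assert the second factor is bounded uniformly in $x$. It is not: when all the gaps of $x$ are of order $M$, $\tau_x$ is typically of order $M^2$, so $|S_{\tau_x}^{(j)}-S_{\tau_x}^{(i)}|$ is typically of order $M$, and the expected product grows like $M^{k(k-1)/2}$, giving a bound of the wrong order $\Delta_1(x)^2$. The paper instead proves the uniform estimate $\mathbf E[\Delta(x+S_n);T_x>n]\le C\Delta_1(x)$ (Lemma~\ref{lem3}) through a multiscale iteration from $n$ down to $n^{1-\varepsilon}$, using the submartingale property of $\Delta(x+S_n)\mathbf 1\{T_x>n\}$ (Lemma~\ref{lem-1}), the bound on $\mathbf E[\Delta(x+S_T);T\le n]$ for $x\in W_{n,\varepsilon}$ (Lemma~\ref{lem0}), and the exponential tail of $\nu_n$ (Lemmas~\ref{lem1} and~\ref{lem2}); both the integrability of $\Delta(x+S_{\tau_x})$ (which you invoked in~(a)) and~(c) itself are consequences of that lemma, not inputs available for free.

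Part~(d) is where the gap is most serious. Splitting $\mathbf E\Delta(x+S_{\tau_x})$ on $\{\tau_x\le N\}$ versus $\{\tau_x>N\}$ does not touch the real difficulty, which is the event $\{T_x>\tau_x\}$: on $\{T_x=\tau_x\}$ the quantity $\Delta(x+S_{\tau_x})$ is nonpositive and only improves the lower bound $V(x)\ge\Delta(x)-\mathbf E[\Delta(x+S_{\tau_x});T_x>\tau_x]$, so what must be shown is $\mathbf E[\Delta(x+S_{\tau_x});T_x>\tau_x]=o(\Delta(x))$, and the size of $\tau_x$ alone gives no handle on this. The paper accomplishes it with the $V^{(T)}$ supermartingale: after showing $V^{(T)}(x)\sim\Delta(x)$ (Lemma~\ref{lem33}), it runs the multiscale iteration in the opposite direction on the supermartingale $V^{(T)}(x+S_n)\mathbf 1\{\tau_x>n\}$ to obtain $\lim_n\mathbf E[V^{(T)}(x+S_n);\tau_x>n]\ge(1-o(1))\Delta(x)$, decomposes $V^{(T)}(x)$ as that limit plus $\mathbf E[V^{(T)}(x+S_{\tau_x});T_x>\tau_x]$, subtracts, and uses $V^{(T)}\ge\Delta$. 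Finally, your sketch of~(e) has the supermartingale argument, which belongs to~(d), grafted onto it with the wrong orientation: $V\le V^{(T)}$ can only give an \emph{upper} bound on $V$ and cannot supply positivity. Once~(d) is in hand, (e) is elementary: pick $R,\delta$ with $V\ge\delta$ on $S_R=\{y:\min_j(y^{(j)}-y^{(j-1)})>R\}$, then $V(x)\ge\mathbf E[V(x+S_N);\tau_x>N]\ge\delta\,\mathbf P(\tau_x>N,\,x+S_N\in S_R)>0$, and no supermartingale comparison is needed.
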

As it was already mentioned in the introduction our approach relies on the investigation of properties 
of the stopping time $T_x$ defined by 
$$
T_x=T=\min\{k\geq1:\Delta(x+S_k)\leq 0\}.
$$
It is easy to see that $T_x\geq\tau_x$ for every $x\in W$.
\subsection{Integrability of $\Delta(x+S_{T_x})$}\label{sect.integrability}
We start by showing that $\mathbf{E}[\Delta(x+S_{T_x})]$ is finite under the conditions of Theorem~\ref{T}.
In this paragraph we omit the 
subscript $x$ if there is no risk of confusion. 

\begin{lemma}\label{lem-1}
The sequence $Y_n:=\Delta(x+S_n){\rm 1}\{T>n\}$ is a submartingale.
\end{lemma}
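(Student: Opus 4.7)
The plan is to combine the well-known martingale identity $\mathbf{E}[\Delta(x+S_{n+1})\mid\mathcal{F}_n]=\Delta(x+S_n)$ (valid under the centering and $(k{-}1)$-th moment assumptions, since $\Delta$ is a homogeneous polynomial of degree $k(k-1)/2$ whose expansion, after taking expectations of the i.i.d.\ centered increments, collapses to $\Delta$ itself) with the defining property of the stopping time $T$.

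First I would check integrability of $Y_n$: on $\{T>n\}$ one has $\Delta(x+S_n)>0$, and $|\Delta(x+S_n)|$ is a polynomial of degree $k(k-1)/2$ in the increments $\xi_i^{(j)}$, hence integrable under the assumption $\mathbf{E}|\xi|^{k-1}<\infty$.

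Next I would split the verification of the submartingale inequality according to whether $T\le n$ or $T>n$. On $\{T\le n\}$ we have $Y_n=0$, and since $\{T>n+1\}\subset\{T>n\}$ also $Y_{n+1}=0$ on this event, so the inequality is trivial. The only real case is $\{T>n\}$. Here the key observation — which is exactly the reason for introducing $T$ rather than $\tau_x$ — is that on $\{T>n\}$
\[
\{T>n+1\}=\{\Delta(x+S_{n+1})>0\}.
\]
Therefore
\[
\mathbf{E}[Y_{n+1}\mid\mathcal{F}_n]\mathbf{1}\{T>n\}
=\mathbf{E}\bigl[\Delta(x+S_{n+1})\mathbf{1}\{\Delta(x+S_{n+1})>0\}\mid\mathcal{F}_n\bigr]\mathbf{1}\{T>n\}.
\]
Since $z\mathbf{1}\{z>0\}\ge z$ for every real $z$, dropping the positivity indicator can only decrease the right-hand side, and the martingale property yields
\[
\mathbf{E}[Y_{n+1}\mid\mathcal{F}_n]\mathbf{1}\{T>n\}\ge\mathbf{E}[\Delta(x+S_{n+1})\mid\mathcal{F}_n]\mathbf{1}\{T>n\}=\Delta(x+S_n)\mathbf{1}\{T>n\}=Y_n,
\]
which is the desired submartingale inequality.

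I do not expect a genuine obstacle. The only substantive input is the classical martingale property of $\Delta(x+S_n)$; the rest is a formal manipulation exploiting the tailored definition of $T$. The same manipulation would fail if $T$ were replaced by $\tau_x$, precisely because on $\{\tau_x\le n\}$ one can have $\Delta(x+S_{\tau_x})>0$, so that dropping $\mathbf{1}\{\Delta(x+S_{n+1})>0\}$ no longer yields a clean lower bound — this matches the heuristic stated in the introduction that the martingale $\Delta(x+S_n)$ does not ``feel'' the exit time $\tau_x$.
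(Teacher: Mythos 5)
Your proof is correct and is essentially the paper's argument in a lightly repackaged form. The paper writes $Y_{n+1}-Y_n$ and observes that after using the martingale property of $\Delta(x+S_n)$ the residual term is $-\mathbf{E}[\Delta(x+S_{n+1})\mathbf{1}\{T=n+1\}\mid\mathcal{F}_n]\ge 0$ because $\Delta(x+S_T)\le 0$; you instead rewrite $\mathbf{1}\{T>n+1\}=\mathbf{1}\{T>n\}\mathbf{1}\{\Delta(x+S_{n+1})>0\}$ and invoke $z\mathbf{1}\{z>0\}\ge z$, which is the same observation in disguise since $z\mathbf{1}\{z>0\}-z=-z\mathbf{1}\{z\le 0\}$.
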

\begin{proof}
Clearly,
\begin{align*}
\mathbf{E}\left[Y_{n+1}-Y_n|\mathcal{F}_n\right]
&=\mathbf{E}\left[\left(\Delta(x+S_{n+1})-\Delta(x+S_{n})\right){\rm 1}\{T>n\}|\mathcal{F}_n\right]\\
&\hspace{2cm}-\mathbf{E}\left[\Delta(x+S_{n+1}){\rm 1}\{T=n+1\}|\mathcal{F}_n\right]\\
&={\rm 1}\{T>n\}\mathbf{E}\left[\left(\Delta(x+S_{n+1})-\Delta(x+S_{n})\right)|\mathcal{F}_n\right]\\
&\hspace{2cm}-\mathbf{E}\left[\Delta(x+S_{n+1}){\rm 1}\{T=n+1\}|\mathcal{F}_n\right].
\end{align*}
The statement of the lemma follows now from the facts that
$\Delta(x+S_n)$ is a martingale and $\Delta(x+S_{T})$ is non-positive.
\end{proof}

For any $\varepsilon>0$, define the following set 
$$
W_{n,\varepsilon}=\{x\in R^k:|x^{(j)}-x^{(i)}|>n^{1/2-\varepsilon},1\le i<j\le k\}.
$$
\begin{lemma}\label{lem0}
For any sufficiently small $\varepsilon>0$ there exists 
$\gamma>0$ such the following inequalities hold
\begin{eqnarray}\label{eq0}
\left|\mathbf E[\Delta(x+S_T);T\le n]\right|\le\frac{C}{n^{\gamma}}\Delta(x),\quad x\in W_{n,\varepsilon}\cap\{\Delta(x)>0\}
\end{eqnarray}
and 
\begin{eqnarray}\label{eq00}
\left|\mathbf E[\Delta_1(x+S_\tau);\tau\le n]\right|\le\frac{C}{n^{\gamma}}\Delta(x),\quad x\in W_{n,\varepsilon}\cap W.
\end{eqnarray}
\end{lemma}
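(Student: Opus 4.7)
My plan is based on the following common mechanism for both estimates: on $\{T\le n\}$ (respectively $\{\tau\le n\}$), one factor of the product $\Delta(x+S_T)=\prod_{i<j}\bigl((x+S_T)^{(j)}-(x+S_T)^{(i)}\bigr)$ (respectively $\Delta_1(x+S_\tau)$) has just crossed zero and is hence of order $|\xi_T|$ rather than the typical excursion scale $\sqrt n$, while the remaining factors only blow up by $n^{O(\varepsilon)}$ for $x\in W_{n,\varepsilon}$. This yields a net saving of $n^{-1/2+O(\varepsilon)}$, producing the required exponent $\gamma$.

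\medskip

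For \eqref{eq0}, since $\Delta(x+S_n)$ is a martingale, optional stopping at the bounded stopping time $T\wedge n$ gives
\[
\Delta(x)=\mathbf E[\Delta(x+S_n);T>n]+\mathbf E[\Delta(x+S_T);T\le n].
\]
Because $\Delta(x+S_T)\le 0$ by the definition of $T$, one has $|\mathbf E[\Delta(x+S_T);T\le n]|=\mathbf E[|\Delta(x+S_T)|;T\le n]$, reducing the inequality to a pointwise estimate. On $\{T\le n\}$ at least one pair $(i,j)$ was still in correct order at time $T-1$ and reversed at time $T$, so $|(x+S_T)^{(j)}-(x+S_T)^{(i)}|\le|\xi_T^{(j)}|+|\xi_T^{(i)}|$; the remaining $k(k-1)/2-1$ factors are bounded by $|x^{(j)}-x^{(i)}|\bigl(1+|S_T|/|x^{(j)}-x^{(i)}|\bigr)\le|x^{(j)}-x^{(i)}|\,n^{O(\varepsilon)}$ on the no-large-jump event, for $x\in W_{n,\varepsilon}$. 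Taking expectations and using $\mathbf E|\xi|<\infty$ to handle the flipped factor then produces the bound $\mathbf E[|\Delta(x+S_T)|;T\le n]\le C\Delta(x)n^{O(\varepsilon)-1/2}$. Inequality \eqref{eq00} is proved by the same dissection, without the martingale step: on $\{\tau\le n\}$ the merged consecutive pair $(i,i+1)$ contributes a factor $\le 1+|\xi_\tau^{(i)}|+|\xi_\tau^{(i+1)}|$ to $\Delta_1(x+S_\tau)$, the remaining factors still give at most $C\Delta(x)n^{O(\varepsilon)}$, and multiplication yields the analogous estimate.

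\medskip

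The main obstacle is that $\xi_T$ and $\xi_\tau$ are not independent of the past, so one cannot straightforwardly separate the small flipped factor from the remaining product. My plan is to split on the event $\bigl\{\max_{m\le n,\,j}|\xi_m^{(j)}|\le n^{1/2-\varepsilon}\bigr\}$: its complement has probability $\le Ckn\cdot n^{-(1/2-\varepsilon)(k-1)}\mathbf E|\xi|^{k-1}$, which is a negative power of $n$ for $k\ge 4$ by the moment assumption (and borderline for $k=3$ once $\mathbf E|\xi|^{2+\delta}<\infty$), and on the good event one controls $\max_{m\le n}|S_m|$ by Kolmogorov's maximal inequality. Balancing the polynomial loss $n^{O(\varepsilon)}$ coming from the non-flipped factors against the $n^{-1/2}$ saving from the flipped one, and ensuring this balance is not destroyed by the tail contribution of atypically large jumps, is the main technical step; the moment exponent $k-1$ is exactly what makes this balance positive.
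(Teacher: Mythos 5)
Your high-level mechanism is the same as the paper's: on $\{T\le n\}$ one pair of coordinates has just crossed, so the corresponding factor of $\Delta(x+S_T)$ is of the size of a single jump rather than of the excursion scale, while for $x\in W_{n,\varepsilon}$ the remaining factors cost only $n^{O(\varepsilon)}\Delta(x)$; and you correctly propose to neutralise the dependence of $\xi_T$ on the path by truncating on the event that all jumps are at most $n^{1/2-\varepsilon}$. However, two steps that you leave as "the main technical step" are exactly where the work lies, and as sketched they do not go through. First, your pointwise bound of each remaining factor by $|x^{(j)}-x^{(i)}|\,n^{O(\varepsilon)}$ "on the no-large-jump event" is false: $\max_{m\le n}|S_m|$ is not bounded by $n^{1/2+o(1)}$ almost surely on that event, so the control must be in expectation. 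Since the residual product has total degree up to $k(k-1)/2-1$, with each coordinate appearing with power up to $k-1$, Kolmogorov's maximal inequality (which only handles second moments) is insufficient; one needs $\mathbf E(\max_{m\le n}|S_m^{(j)}|)^{p}\le Cn^{p/2}$ for $p$ up to $k-1$, which the paper gets by expanding the product into monomials, using independence across coordinates, and applying Doob's $L^p$ inequality together with Rosenthal's inequality — this is precisely where the moment assumption $\mathbf E|\xi|^{k-1}<\infty$ is consumed. Relatedly, your claimed saving of $n^{-1/2+O(\varepsilon)}$ via "$\mathbf E|\xi|<\infty$ to handle the flipped factor" is not available: the jump at the random time $T$ is size-biased towards large values, so $\mathbf E[|\xi_T|;T\le n]$ need not be $O(1)$; after truncation the honest saving is only $n^{-\delta}$ (flipped factor $\le 2n^{1/2-\delta}$ against the generic $n^{1/2}$), which is still enough once $\varepsilon$ is chosen small relative to $\delta$.

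Second, and more seriously, you bound only the \emph{probability} of the complement of the truncation event, but the quantity to be controlled there is $\mathbf E[|\Delta(x+S_T)|;T\le n,\overline{A_n}]$, and $|\Delta(x+S_T)|$ is largest precisely when a big jump has occurred. A H\"older argument to separate the indicator from $|\Delta|$ would require strictly more than $k-1$ moments (this is exactly the source of the superfluous moment conditions in earlier work that this lemma is designed to avoid). The paper's resolution is the estimate
$\mathbf{E}[(M_n^{(r)})^{i_r};D_r]\le C(\delta)n^{i_r/2-\alpha/2+1+(i_r+\alpha)\delta}$,
proved via the Fuk--Nagaev inequality and its maximal version, where $D_r$ is the event that coordinate $r$ has a large jump; the factor $n^{1-\alpha/2}$ is what makes the bad-event contribution a negative power of $n$ under the minimal moment assumption. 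Without an ingredient of this type your argument has a genuine gap: the good-event and bad-event contributions cannot both be closed with the tools you name.
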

\begin{proof}
We shall prove (\ref{eq0}) only. The proof of (\ref{eq00}) requires some minor changes, and we omit it.

For a constant $\delta >0$, which we define later, let
$$
A_n=\left\{\max_{1\le i\le n, 1\le j\le k}|\xi_i^{(j)}|\le n^{1/2-\delta}\right\}
$$
and  split the  expectation into 2 parts,
\begin{align}\label{L0.0}
\nonumber
\mathbf E [\Delta(x+S_T);\,T\le n]
&=
\mathbf E [\Delta(x+S_T);\,T\le n, A_n]+\mathbf E [\Delta(x+S_T);\,T\le n, \overline A_n]\\
&=: E_1(x)+E_2(x).
\end{align}
It follows from the definition of the stopping time $T$ that at least one of the differences
$(x^{(r)}+S^{(r)}-x^{(s)}-S^{(s)})$ changes the sign at time $T$, i.e. one of the following events
occurs
$$
B_{s,r}:=\left\{(x^{(r)}+S^{(r)}_{T-1}-x^{(s)}-S^{(s)}_{T-1})(x^{(r)}+S^{(r)}_T-x^{(s)}-S^{(s)}_T)\leq0\right\},
$$
$1\leq s<r\leq k$. Clearly,
$$
|E_1(x)|\le 
\sum_{1\le s<r\le k}
\mathbf E [|\Delta(x+S_T)|;\,T\le n, A_n, B_{s,r}].
$$
On the event $A_n\cap B_{s,r}$,
\begin{eqnarray*}
\Big|x^{(s)}-x^{(r)}+S^{(s)}_{T}-S^{(r)}_T\Big|\leq 
\Big|\xi^{(s)}_{T}-\xi^{(r)}_{T}\Big|\le 2n^{1/2-\delta}.
\end{eqnarray*}
This implies that on the event 
$A_{n}\cap B_{s,r}$,
$$
|\Delta(x+S_T)|\le 2n^{1/2-\delta}\left| \frac{\Delta(x+S_T)}{x^{(s)}-x^{(r)}+S^{(r)}_{T}-S^{(s)}_T}\right|.
$$
Put $\mathcal P=\{(i,j), 1\le i<j\le k\}$. Then,
\begin{align*}
\frac{\Delta(x+S_T)}{x^{(s)}-x^{(r)}+S^{(r)}_{T}-S^{(s)}_T}
&=
\prod_{(i,j)\in \mathcal P\backslash (s,r)}
\left(x^{(j)}-x^{(i)}+S_T^{(j)}-S_T^{(i)}\right)
\\
&=
 \sum_{\mathcal J \subset \mathcal P\backslash(s,r)}
	\prod_{\mathcal  J} \left(x^{(i_2)}-x^{(i_1)}\right) 
	\prod_{\mathcal P\backslash (\mathcal  J\cup(s,r))} \left(S^{(j_2)}_T-S^{(j_1)}_T\right).
\end{align*}
As is not difficult to see, 
$$
\prod_{\mathcal P\backslash (\mathcal  J\cup(s,r))} (S^{(j_2)}_T- S^{(j_1)}_T)
=p_{\mathcal J}(S_T)
=\sum_{i_1,i_2,\ldots,i_k}\alpha^{\mathcal J}_{i_1,i_2,\ldots,i_k}(S^{(1)}_T)^{i_1}\ldots(S^{(k)}_T)^{i_k},
$$
where the sum is taken over all $i_1,i_2,\ldots,i_k$ such that $i_1+i_2+\ldots+i_k=|\mathcal P|-|\mathcal  J|-1$.

Put $M^{(j)}_n=\max_{0\le i \le n}{|S^{(j)}_i|}$. Combining Doob's and Rosenthal's inequalities, one has
\begin{equation}
\label{DR}
\mathbf{E}\left(M^{(j)}_n\right)^p\leq C(p)\mathbf{E}\left|S^{(j)}_n\right|^p\leq
C(p)\mathbf{E}[|\xi|^p]n^{p/2}
\end{equation}
Then, 
\begin{eqnarray}\label{L0.1}
\nonumber
\mathbf E |p_{\mathcal J}(S_T)\mathbf 1_{\{T\le n\}}|
&\le & 
 \sum_{i_1,i_2,\ldots,i_k}|\alpha^{\mathcal J}_{i_1,i_2,\ldots,i_k}|\mathbf E(M^{(1)}_n)^{i_1}\ldots\mathbf E (M^{(k)}_n)^{i_k}\\
\nonumber
&\le & 
 \sum_{i_1,i_2,\ldots,i_k}|\alpha^{\mathcal J}_{i_1,i_2,\ldots,i_k}|C_{i_1}  n^{i_1/2} \ldots C_{i_k}  n^{i_k/2}\\
&\le &
C_{\mathcal J} (n^{1/2})^{|\mathcal P|-|\mathcal  J|-1}. 
\end{eqnarray}
where   $C_1,C_2,\ldots $ are universal constants. 
Now note that since $x\in W_{n,\varepsilon}$,
we have a simple estimate 
\begin{equation}\label{L0.2}
n^{1/2}=n^\varepsilon n^{1/2-\varepsilon}\le n^\varepsilon 
|x^{(j_2)}-x^{(j_1)}|
\end{equation}
for any $j_1<j_2$.
Using (\ref{L0.1}) and (\ref{L0.2}), we obtain
\begin{align*}
&\mathbf E \left[ \left|\frac{\Delta(x+S_T)}{x^{(r)}-x^{(s)}+S^{(r)}_{T}-S^{(s)}_T}\right|;\,T\le n, A_n, B_{s,r}\right]\\
&\hspace{2cm}
\le \sum_{\mathcal J \subset \mathcal P\backslash(s,r)}
C_{\mathcal J} (n^{1/2})^{|\mathcal P|-|\mathcal  J|-1}\prod_{\mathcal  J} |x^{(i_2)}-x^{(i_1)}|\\
&\hspace{2cm}
\le \sum_{\mathcal J \subset \mathcal P\backslash(s,r)}\mathcal C_J( n^\varepsilon)^{|\mathcal P|-|\mathcal  J|-1}
 \prod_{\mathcal  J} |x^{(i_2)}-x^{(i_1)}|
  \prod_{\mathcal P\backslash (\mathcal  J\cup(s,r))} |x^{(j_2)}-x^{(j_1)}|\\
&\hspace{2cm} \le C_k n^{\varepsilon \frac{k(k-1)-1}{2}}\frac{\Delta(x)}{|x^{(r)}-x^{(s)}|}\le 
  C_{k} n^{\varepsilon \frac{k(k-1)}{2}}n^{-1/2}\Delta(x).
 \end{align*}
Thus,
\begin{equation}\label{L0.3}
E_1(x)\le \sum_{1\le s<r\le k} 
2{n^{1/2-\delta}}C_{k} n^{\varepsilon \frac{k(k-1)}{2}}n^{-1/2}\Delta(x)=
k(k-1) C_{k} n^{\varepsilon \frac{k(k-1)}{2}-\delta}\Delta(x).
\end{equation}

Now we estimate $E_2(x)$. 
Clearly,
$$
\overline A_n= \bigcup_{r=1}^k D_r,
$$
 where $D_r=\{\max_{1\le i\le n}|\xi_i^{(r)}|>n^{1/2-\delta}\}$. 
As in the first part of the proof,
\begin{eqnarray*}
\Delta (x+S_T)=\sum_{\mathcal J \subset \mathcal P}\prod_{\mathcal  J} (x^{(i_2)}-x^{(i_1)}) 
\prod_{\mathcal P\backslash\mathcal J} (S^{(j_2)}_T-S^{(j_1)}_T)
\end{eqnarray*}
and
\begin{eqnarray*}
\prod_{\mathcal P\backslash\mathcal J} (S^{(j_2)}_T-S^{(j_1)}_T)
=
\sum_{i_1,i_2,\ldots,i_k}\alpha^{\mathcal J}_{i_1,i_2,\ldots,i_k}(S^{(1)}_T)^{i_1}\ldots( S^{(k)}_T)^{i_k}.
\end{eqnarray*}
Then, using (\ref{DR}) once again, we get
\begin{align*}
&\mathbf E \left[\left|\prod_{\mathcal P\backslash\mathcal J} (S^{(j_2)}_T-S^{(j_1)}_T)\right|;T\leq n, D_r\right]\\ 
&\hspace{2cm}\le\sum_{i_1,i_2,\ldots, i_k}\left|\alpha^{\mathcal J}_{i_1,i_2,\ldots,i_k}\right|
C_{i_1}n^{i_1/2}\ldots\mathbf E \left[(M^{(r)}_{n})^{i_r};D_r\right]\ldots C_{i_k}n^{i_k/2}.
\end{align*}
Applying the following estimate, which will be proved at the end of the lemma,
\begin{equation}\label{L0.4}
\mathbf{E} \left[(M^{(r)}_{n})^{i_r};D_r\right]\leq C(\delta)n^{i_r/2-\alpha/2+1+(i_r+\alpha)\delta},
\end{equation}
we obtain
\begin{align*}
&\mathbf E \left[\left|\prod_{\mathcal P\backslash\mathcal J} (S^{(j_2)}_T-S^{(j_1)}_T)\right|;T\leq n, D_r\right]
\le C_{\mathcal J}C(\delta)(n^{1/2})^{|\mathcal P|-|\mathcal J|}\ n^{-\alpha/2+1+2\alpha\delta}.
\end{align*}
This implies that
\begin{align*}
&\mathbf{E}\left[|\Delta(x+S_T)|;\,T\leq n,D_r\right]\\
&\hspace{1cm}
\le C(\delta)n^{-\alpha/2+1+2\alpha\delta}\sum_{\mathcal J \subset \mathcal P}
C_{\mathcal J} (n^{1/2})^{|\mathcal P|-|\mathcal  J|}\prod_{\mathcal  J} |x^{(i_2)}-x^{(i_1)}|\\
&\hspace{1cm}
\le C(\delta)n^{-\alpha/2+1+2\alpha\delta}\sum_{\mathcal J \subset \mathcal P}\mathcal C_J( n^\varepsilon)^{|\mathcal P|-|\mathcal  J|}
 \prod_{\mathcal  J} |x^{(i_2)}-x^{(i_1)}|
  \prod_{\mathcal P\backslash \mathcal  J} |x^{(j_2)}-x^{(j_1)}|\\
&\hspace{1cm} \le C(\delta) n^{\varepsilon \frac{k(k-1)}{2}}n^{-\alpha/2+1+2\alpha\delta}\Delta(x).
\end{align*}
Consequently,
\begin{equation}
\label{L0.5}
E_2(x)\leq\sum_{r=1}^k\mathbf{E}\left[|\Delta(x+S_T)|;\,T\leq n,D_r\right]\leq
kC(\delta) n^{\varepsilon \frac{k(k-1)}{2}}n^{-\alpha/2+1+2\alpha\delta}\Delta(x).
\end{equation}
Applying (\ref{L0.3}) and (\ref{L0.5}) to the right hand side of (\ref{L0.0}), and choosing $\varepsilon$ and $\delta$ in an
appropriate way, we arrive at the conclusion. 

Thus, it remains to show (\ref{L0.4}).

It is easy to see that, for any $i_r\in(0,\alpha)$,
\begin{align*}
\mathbf{E} \left[(M^{(r)}_{n})^{i_r};D_r\right]&=i_r\int_0^\infty x^{i_r-1}\mathbf{P}(M^{(r)}_{n}>x, D_r)dx\\
&\leq n^{i_r(1/2+\delta)}\mathbf{P}(D_r)+i_r\int_{n^{1/2+\delta}}^\infty x^{i_r-1}\mathbf{P}(M^{(r)}_{n}>x)dx
\end{align*}
Putting $y=x/p$ in Corollary 1.11 of \cite{Nag79}, we get the inequality
$$
\mathbf{P}(|S^{(r)}_{n}|>x)\leq C(p)\Bigl(\frac{n}{x^2}\Bigr)^p+n\mathbf{P}(|\xi|>x/p).
$$
As was shown in \cite{Bor72}, this inequality remains valid for $M^{(r)}_{n}$, i.e.
$$
\mathbf{P}(M^{(r)}_{n}>x)\leq C(p)\Bigl(\frac{n}{x^2}\Bigr)^p+n\mathbf{P}(|\xi|>x/p).
$$
Using the latter bound with $p>i_r/2$,
we have
\begin{align*}
&\int_{n^{1/2+\delta}}^\infty x^{i_r-1}\mathbf{P}(M^{(r)}_{n}>x)dx\\
&\hspace{2cm}\leq
C(p) i_r n^p \int_{n^{1/2+\delta}}^\infty x^{i_r-1-2p}dx+n\int_{n^{1/2+\delta}}^\infty x^{i_r-1}\mathbf{P}(|\xi|>x/p)dx\\
&\hspace{2cm}\leq C(p) \frac{i_r}{2p-i_r}n^{p-(2p-i_r)(1/2+\delta)}+p^pn\mathbf{E}[|\xi|^{i_r},|\xi|>n^{1/2+\delta}/p]\\
&\hspace{2cm}\leq C(p)\Bigl(n^{p-(2p-i_r)(1/2+\delta)}+n^{1+(1/2+\delta)(i_r-\alpha)}\Bigr).
\end{align*}
Choosing $p>\alpha/2\delta$, we get
$$
\int_{n^{1/2+\delta}}^\infty x^{i_r-1}\mathbf{P}(M^{(r)}_{n}>x)dx\leq C(\delta)n^{i_r/2+1-\alpha/2}.
$$
Note that
\begin{equation}
 \label{L0.6}
\mathbf{P}(D_r)\leq n\mathbf{P}(|\xi|>n^{1/2-\delta})\leq C n^{1-\alpha(1/2-\delta)},
\end{equation}
we obtain
$$
\mathbf{E} \left[(M^{(r)}_{n})^{i_r};D_r\right]\leq C(\delta)n^{i_r/2+1-\alpha/2+(\alpha+i_r)\delta}.
$$
Thus, (\ref{L0.4}) is proved for $i_r\in(0,\alpha)$.
If $i_r=0$, then $\mathbf{E} \left[(M^{(r)}_{n})^{i_r};A_r\right]=\mathbf{P}(D_r)$. Therefore,
(\ref{L0.4}) with $i_r=0$ follows from (\ref{L0.6}).
\end{proof}
Define
$$
\nu_n:=\min\{k\geq1: x+S_k\in W_{n,\varepsilon}\}.
$$
\begin{lemma}
\label{lem1}
For every $\varepsilon>0$ holds
$$
\mathbf{P}(\nu_n>n^{1-\varepsilon})\leq\exp\{-Cn^{\varepsilon}\}.
$$
\end{lemma}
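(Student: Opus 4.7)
The plan is to partition the interval $[0,n^{1-\varepsilon}]$ into $N:=\lfloor n^\varepsilon\rfloor$ consecutive blocks of length $m:=\lfloor n^{1-2\varepsilon}\rfloor$ (assuming $\varepsilon<1/2$, which is the only regime of interest), and to apply the Markov property to the sampled walk $(x+S_{jm})_{j=0,\ldots,N}$. The crucial intermediate assertion will be a uniform lower bound: there is a constant $p=p_k>0$ depending only on $k$ such that, for every $y\in\mathbf R^k$ and all sufficiently large $n$,
\begin{equation*}
\mathbf P(y+S_m\in W_{n,\varepsilon})\ge p.
\end{equation*}
Granted this estimate, a routine block-by-block argument yields
\begin{equation*}
\mathbf P(\nu_n>n^{1-\varepsilon})\le\mathbf P(x+S_{jm}\notin W_{n,\varepsilon},\ j=1,\ldots,N)\le(1-p)^N\le\exp(-Cn^\varepsilon).
\end{equation*}

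To establish the uniform bound I would construct an explicit event that forces $y+S_m\in W_{n,\varepsilon}$ and whose probability does not depend on $y$ at all. Given $y$, let $\sigma$ be any permutation of $\{1,\ldots,k\}$ sorting its coordinates, $y^{(\sigma(1))}\le\ldots\le y^{(\sigma(k))}$, and set
\begin{equation*}
E_\sigma:=\bigcap_{i=1}^{k-1}\bigl\{S_m^{(\sigma(i+1))}-S_m^{(\sigma(i))}>2\sqrt{m}\bigr\}.
\end{equation*}
On $E_\sigma$, summing the consecutive gaps and exploiting $y^{(\sigma(j))}\ge y^{(\sigma(i))}$ for $i<j$ yields
\begin{equation*}
(y^{(\sigma(j))}+S_m^{(\sigma(j))})-(y^{(\sigma(i))}+S_m^{(\sigma(i))})>(y^{(\sigma(j))}-y^{(\sigma(i))})+2\sqrt{m}(j-i)\ge 2\sqrt{m}>n^{1/2-\varepsilon},
\end{equation*}
so every pairwise difference of $y+S_m$ exceeds $n^{1/2-\varepsilon}$ in absolute value, and hence $E_\sigma\subset\{y+S_m\in W_{n,\varepsilon}\}$. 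Because the coordinate random walks $S^{(1)},\ldots,S^{(k)}$ are i.i.d.\ sequences, the joint law of $(S_m^{(1)},\ldots,S_m^{(k)})$ is exchangeable under permutations of indices, so $\mathbf P(E_\sigma)=\mathbf P(S_m^{(i+1)}/\sqrt{m}-S_m^{(i)}/\sqrt{m}>2,\ i=1,\ldots,k-1)$ does not depend on $\sigma$ (or on $y$). By the multidimensional central limit theorem this probability tends, as $n\to\infty$, to $\mathbf P(G_{i+1}-G_i>2,\ i=1,\ldots,k-1)$ for i.i.d.\ standard normals $G_i$, which is a strictly positive constant depending only on $k$.

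The remainder is routine. Conditioning successively on $\mathcal F_{(j-1)m}$ and applying the uniform bound at $y=x+S_{(j-1)m}$ gives $\mathbf P(x+S_{jm}\notin W_{n,\varepsilon}\mid\mathcal F_{(j-1)m})\le 1-p_k/2$ for all sufficiently large $n$, and iterating delivers the geometric estimate displayed above. The main point requiring care is producing a lower bound on $\mathbf P(y+S_m\in W_{n,\varepsilon})$ that is genuinely independent of the starting point $y$; the device of sorting $y$ and then invoking coordinate exchangeability neatly circumvents this obstacle without any recourse to local limit theorems or density estimates on $\xi$.
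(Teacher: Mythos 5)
Your argument is correct, and it takes a genuinely different route from the paper. The paper also samples the walk at $\sim n^{\varepsilon}$ equally spaced times of order $n^{1-2\varepsilon}$ apart, but then proceeds by contradiction: on $\{\nu_n>n^{1-\varepsilon}\}$ some pair of coordinates is within $n^{1/2-\varepsilon}$ at each sampled time, a pigeonhole step extracts one fixed pair that is close at a positive fraction of these times (at the cost of a combinatorial factor $\binom{[n^\varepsilon/a^2]}{[n^\varepsilon/(2a^2k^2)]}(2k^2)^{\cdots}$), and the CLT shows each successive ``stay close'' increment event has probability at most $4/a$; the free parameter $a$ in the block length must then be taken large enough to beat the combinatorial factor. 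You instead prove the complementary, stronger statement directly: from \emph{any} configuration $y$, one block of length $m\approx n^{1-2\varepsilon}$ lands the walk in $W_{n,\varepsilon}$ with probability bounded below by a constant $p_k>0$, which immediately gives the geometric bound $(1-p_k)^{n^{\varepsilon}}$ by the Markov property. The device of sorting $y$ and forcing each consecutive sorted gap to grow by $2\sqrt m$, combined with exchangeability of the coordinates to make the bound independent of $y$, is the key point, and it eliminates both the pigeonhole step and the tuning of $a$. Both proofs use only the finite-variance CLT, so neither is more demanding on the hypotheses; yours is the cleaner of the two. Two small points to make explicit when writing it up: $Nm\le n^{1-\varepsilon}$ so the sampled times are indeed covered by the event $\{\nu_n>n^{1-\varepsilon}\}$, and the limiting Gaussian orthant probability $\mathbf P(G_{i+1}-G_i>2,\ i=1,\dots,k-1)$ is strictly positive because the vector of consecutive differences has a nondegenerate (tridiagonal) covariance matrix.
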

\begin{proof}
To shorten formulas in the proof we set $S_0=x$.
Also, set, for brevity, $b_n=[an^{1/2-\varepsilon}]$. 
The parameter $a$ will be chosen at the end of the proof.

First note that 
\begin{eqnarray*}
\{\nu_n>n^{1-\varepsilon}\}
\subset
\bigcap_{i=1}^{[n^{\varepsilon}/a^2]}
\bigcup_{1\leq j<l\leq k}\{|S^{(l)}_{i\cdot b_n^2}-S^{(j)}_{i\cdot b_n^2}|\leq n^{1/2-\varepsilon}\}.
\end{eqnarray*}
Then there exists at least one pair $\widehat j,\widehat l$ such that for at least at $[n^\varepsilon/(a^2k^2)]$ points 
$$
\mathcal I=\{i_1,\ldots, i_{[n^\varepsilon/(a^2k^2)]}\}\subset
\{b_n^2,2b_n^2,\ldots, [n^{\varepsilon}/a^2]b_n^2\}
$$
we have 
$$
|S^{(\widehat l)}_{i}-S^{(\widehat j)}_{i}|\leq
n^{1/2-\varepsilon} \mbox{ for } i\in \mathcal I.
$$
Without loss of generality we may assume that $\widehat j=1$ and $\widehat l=2$.
There should exist at least $[n^{\varepsilon}/(2a^2k^2)]$ points with the distance less than $2k^2b_n^2$. To simplify notation assume that points $i_1,\ldots i_{n^{\varepsilon}/(2a^2k^2)}$ enjoy this property:
$$
\max(i_2-i_1,i_3-i_2,\ldots, i_{[n^{\varepsilon}/(2a^2k^2)]}-i_{[n^{\varepsilon}/(2a^2k^2)]-1})\le 
2k^2b_n^2.
$$
In fact this means that $i_s-i_{s-1}$ can take only values 
$\{jn^{1-2\varepsilon},\ 1\le j\leq 2k^2\}$.
The above considerations imply that 
\begin{align*}
&\mathbf P\left(\nu_n>n^{1-\varepsilon}\right)\\
&\le {k\choose 2}{[n^\varepsilon/a^2]\choose [n^\varepsilon/(2a^2k^2]}
\mathbf P \left(|S^{( 2)}_{i}-S^{(1)}_{i}|
\leq n^{1/2-\varepsilon}\mbox{ for all }i\in\{i_1,\ldots,i_{[n^{\varepsilon}/(2a^2k^2)]}\}\right)\\
&\le {k\choose 2}{[n^\varepsilon/a^2]\choose [n^\varepsilon/(2a^2k^2]}
\prod_{s=2}^{[n^{\varepsilon}/(2a^2k^2)]}
\mathbf P \left(\left|(S^{(2)}_{i_s}-S^{(2)}_{i_s-1})-(S^{(1)}_{i_s}-S^{(1)}_{i_{s-1}})\right|
\leq 2n^{1/2-\varepsilon}\right).
\end{align*}
Using the Stirling formula, we get
$$
{[n^\varepsilon/a^2]\choose [n^\varepsilon/(2a^2k^2]}\leq 
\frac{a}{n^\varepsilon}(2k^2)^{n^\varepsilon/a^2}.
$$
By the Central Limit Theorem,
\begin{eqnarray*}
\lim_{n\to\infty}\mathbf P \left(\left|S^{(2)}_{jb_n^2}-S^{(1)}_{jb_n^2}\right|
\leq 2n^{1/2-\varepsilon}\right)=\int_{-\sqrt{2}/(a\sqrt{j})}^{\sqrt{2}/(a\sqrt{j})} \frac{1}{\sqrt{2\pi}}e^{-u^2/2}du
\leq\frac{2}{a}.
\end{eqnarray*}
Thus, for all sufficiently large $n$,
$$
\prod_{s=2}^{[n^{\varepsilon}/(2a^2k^2)]}
\mathbf P \left(\left|(S^{(2)}_{i_s}-S^{(2)}_{i_s-1})-(S^{(1)}_{i_s}-S^{(1)}_{i_{s-1}})\right|\right)
\leq \Bigl(\frac{4}{a}\Bigr)^{n^{\varepsilon}/(2a^2k^2)-1}
$$
Consequently,
$$
\mathbf P\left(\nu_n>n^{1-\varepsilon}\right)\leq
\left(\frac{4(2k^2)^{2k^2}}{a}\right)^{n^{\varepsilon}/(2a^2k^2)}
$$
Choosing $a=8(2k^2)^{2k^2}$, we complete the proof.
\end{proof}
\begin{lemma}\label{lem2}
For every $\varepsilon>0$ the inequality
$$
\mathbf{E}[|\Delta_t(x+S_n)|;\nu_n>n^{1-\varepsilon}]\leq c_t \Delta_1(x)\exp\{-Cn^\varepsilon\}
$$
holds.
\end{lemma}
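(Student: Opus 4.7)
The plan is to decouple the walk at time $m:=[n^{1-\varepsilon}]$ into the block $S_m$ (which is $\mathcal F_m$-measurable, and on which the event $\{\nu_n>m\}$ is entirely determined) and the fresh block $\widetilde S:=S_n-S_m$ (independent of $\mathcal F_m$), then to exploit the exponential bound of Lemma \ref{lem1} on $\mathbf P(\nu_n>m)$.

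First, using the two elementary inequalities $t+|u|\le (t+1)(1+|u|)$ and $1+|u+v|\le(1+|u|)(1+|v|)$ factor by factor in the product $\Delta_t(x+S_n)$, and splitting each difference as $x^{(j)}-x^{(i)}+S_n^{(j)}-S_n^{(i)}=(x^{(j)}-x^{(i)})+(S_m^{(j)}-S_m^{(i)})+(\widetilde S^{(j)}-\widetilde S^{(i)})$, I obtain the deterministic factorisation
\[
|\Delta_t(x+S_n)|\le c_t\,\Delta_1(x)\cdot Z_m\cdot\widetilde Z,
\]
where $Z_m:=\prod_{i<j}(1+|S_m^{(j)}-S_m^{(i)}|)$ is $\mathcal F_m$-measurable and $\widetilde Z:=\prod_{i<j}(1+|\widetilde S^{(j)}-\widetilde S^{(i)}|)$ is independent of $\mathcal F_m$. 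Since $\{\nu_n>m\}\in\mathcal F_m$, taking expectations yields
\[
\mathbf E\bigl[|\Delta_t(x+S_n)|;\nu_n>m\bigr]\le c_t\,\Delta_1(x)\cdot\mathbf E\widetilde Z\cdot\mathbf E[Z_m;\nu_n>m].
\]
Expanding $\widetilde Z$ into monomials of degree at most $k-1$ per coordinate and applying Rosenthal's inequality exactly as in the proof of Lemma \ref{lem0} (each coordinate is independent and $\mathbf E|\xi|^{k-1}<\infty$) gives the purely polynomial bound $\mathbf E\widetilde Z\le Cn^{k(k-1)/4}$.

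It remains to show $\mathbf E[Z_m;\nu_n>m]\le C\exp\{-C'n^\varepsilon\}$, for which I follow the truncation template of Lemma \ref{lem0}. Fix a small $\delta>0$ and set $B_n:=\{\max_{i\le m,\,j\le k}|\xi_i^{(j)}|\le n^{1/2-\delta}\}$. On $B_n$ one has $|S_m^{(j)}|\le mn^{1/2-\delta}\le n^{3/2-\varepsilon-\delta}$, hence $Z_m\le n^{D(k,\varepsilon,\delta)}$; combining with Lemma \ref{lem1},
\[
\mathbf E[Z_m;\nu_n>m,B_n]\le n^D\mathbf P(\nu_n>m)\le n^D e^{-Cn^\varepsilon},
\]
which is already of the required form since polynomial factors are absorbed in the exponential. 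On $\bar B_n=\bigcup_r D_r$ with $D_r:=\{\max_{i\le m}|\xi_i^{(r)}|>n^{1/2-\delta}\}$, I expand $Z_m$ as a sum of monomials in the $S_m^{(\ell)}$, use independence of the coordinates, and estimate each monomial using the refined bound \eqref{L0.4} on $\mathbf E[(M_m^{(r)})^{i_r};D_r]$; coupled with $\mathbf P(\{\nu_n>m\}\cap\bar B_n)\le e^{-Cn^\varepsilon}$, a suitable choice of $\delta$ (roughly $\delta<\varepsilon/(k-1)$) together with $\varepsilon$ small yields the required exponential bound.

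The main obstacle is precisely the $\bar B_n$ contribution for $k\ge 4$: with only $k-1$ moments available, $Z_m$ lies in $L^1$ but not in any $L^{1+\eta}$, so a crude Cauchy–Schwarz or Hölder split is unavailable. The workaround is exactly the monomial expansion of Lemma \ref{lem0} combined with the tailored truncated moment estimate \eqref{L0.4}, which replaces the missing fractional moment by a sharp control on truncated maxima.
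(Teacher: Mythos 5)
Your factorisation $|\Delta_t(x+S_n)|\le c_t\Delta_1(x)\,Z_m\,\widetilde Z$, the bound $\mathbf E\widetilde Z\le Cn^{k(k-1)/4}$, and the treatment of the truncated event $B_n$ are all fine. The genuine gap is in the $\bar B_n$ contribution, and you have in fact put your finger on it without resolving it. The estimate \eqref{L0.4} is purely polynomial: combined with the other monomial factors it gives at best $\mathbf E[Z_m;D_r]\le Cm^{k(k-1)/4-(k-3)/2+O(\delta)}$, with no exponential decay. To upgrade this to $\exp\{-Cn^\varepsilon\}$ you must use the event $\{\nu_n>m\}$ inside the expectation, which requires a H\"older/Cauchy--Schwarz split; but, as you yourself note, with only $k-1$ moments $Z_m$ has no $L^{1+\eta}$ norm, so a naive H\"older against $\mathbf P(\nu_n>m,\bar B_n)$ fails. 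Your phrase ``coupled with $\mathbf P(\{\nu_n>m\}\cap\bar B_n)\le e^{-Cn^\varepsilon}$'' is precisely where the argument is missing: a polynomial $L^1$-moment bound and an exponential probability bound on a correlated event cannot simply be multiplied.

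The paper's proof avoids this by not decoupling at the deterministic time $m=[n^{1-\varepsilon}]$. Instead it isolates a pair $(1,2)$ and a random time $\mu\le n^{1-\varepsilon}$ (the moment of the $[n^\varepsilon/(a^2k^2)]$-th closeness of that pair along the grid $jb_n^2$), so that (i) $\mathbf P(\mu\le n^{1-\varepsilon})\le e^{-Cn^\varepsilon}$, and crucially (ii) $|x^{(2)}-x^{(1)}+S^{(2)}_\mu-S^{(1)}_\mu|\le n^{1/2-\varepsilon}$ \emph{at the decoupling time}. Point (ii) lets one pull out the small factor $(2+|x^{(2)}-x^{(1)}+S^{(2)}_\mu-S^{(1)}_\mu|)^{-1}$ from $\Delta_1(x+S_\mu)$ \emph{before} separating $x$- and $S_\mu$-contributions. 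After this extraction the monomial degrees of coordinates $1$ and $2$ are at most $k-2$ rather than $k-1$, which creates exactly the slack in the exponent that makes H\"older against $\mathbf P(\mu\le n^{1-\varepsilon})$ legitimate under $\mathbf E|\xi|^{k-1}<\infty$. Your approach loses this structure twice over: by fixing $m$ deterministically you do not have the closeness of a specific pair at the decoupling time, and by defining $Z_m$ as $\prod(1+|S_m^{(j)}-S_m^{(i)}|)$ (with $x$ already factored off into $\Delta_1(x)$) even the closeness of a pair at time $m$ (which does follow from $\nu_n>m$) gives no control on $|S_m^{(j)}-S_m^{(i)}|$ when $|x^{(j)}-x^{(i)}|$ is large. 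To repair the proof you would have to decouple at a time where a specific pair is forced to be close and keep the factor $(1+|x^{(j)}-x^{(i)}+S^{(j)}-S^{(i)}|)$ for that pair intact until after the degree reduction --- which is essentially the paper's argument.
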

\begin{remark}
If $\mathbf{E}|\xi|^\alpha<\infty$ for some $\alpha>k-1$, then the claim in the lemma follows easily from the
H{\"o}lder inequality and Lemma~\ref{lem1}. But our moment assumption requires more
detailed analysis.\hfill $\diamond$
\end{remark}
\begin{proof}
We give the proof only for $t=0$.

For $1\le l<i\le k$ define
$$
G_{l,i}=\left\{|x^{(l)}-x^{(i)}+S^{(l)}_{jb_n^2}-S^{(i)}_{jb_n^2}|\leq n^{1/2-\varepsilon}\mbox{\,for at least\,}
\left[\frac{n^\varepsilon}{a^2k^2}\right]
\mbox{\,values of\,}j\leq\frac{n^\varepsilon}{a^2}\right\}.
$$
Noting that $\{\nu_n>n^{1-\varepsilon}\}\subset\bigcup G_{l,i}$, we get
$$
\mathbf{E}[|\Delta(x+S_n)|;\nu_n>n^{1-\varepsilon}]\leq 
{k\choose 2}\mathbf{E}[|\Delta(x+S_n)|;G_{1,2}].
$$
Therefore, we need to derive an upper bound for $\mathbf{E}[|\Delta(x+S_n)|;G_{1,2}]$.

Let $\mu=\mu_{1,2}$ be the moment when $|x^{(2)}-x^{(1)}+S^{(2)}_{jb_n^2}-S^{(1)}_{jb_n^2}|\leq n^{1/2-\varepsilon}$ for the
$[n^\varepsilon/(a^2k^2)]$ time. Then it follows from the proof of the previous lemma that
\begin{equation}
\label{L2.1}
\mathbf{P}(\mu\leq n^{1-\varepsilon})=\mathbf{P}(G_{1,2})\leq \exp\{-Cn^\varepsilon\}.
\end{equation}
Using the inequality $|a+b|\le (1+|a|)(1+|b|)$ one can see that 
\begin{align}\label{L2.2}
\nonumber
\mathbf{E}[|\Delta(x+S_n)|;G_{1,2}]
&\leq\mathbf{E}[|\Delta(x+S_n)|;\mu\leq n^{1-\varepsilon}]
=\sum_{m=1}^{n^{1-\varepsilon}}
\mathbf{E}[|\Delta(x+S_n)|;\mu=m]
\\\nonumber
&\le 
\sum_{m=1}^{n^{1-\varepsilon}}
\mathbf{E}[\Delta_1(S_n-S_m)]\mathbf{E}[\Delta_1(x+S_m);\mu=m]\\
&\le \max_{m\le n^{1-\varepsilon}} \mathbf{E}[\Delta_1(S_n-S_m)]
\mathbf{E}[\Delta_1(x+S_\mu);\mu\leq n^{1-\varepsilon}].
\end{align}
Making use of (\ref{DR}), one can verify that
\begin{equation}\label{L2.3}
\max_{m\le n^{1-\varepsilon}} \mathbf{E}[\Delta_1(S_n-S_m)]\leq Cn^{k(k-1)/4}.
\end{equation}

Recall that by the definition of $\mu$ we have  $|x^{(2)}-x^{(1)}+S^{(2)}_\mu-S^{(1)}_\mu|\leq n^{1/2-\varepsilon}.$ 
Therefore,
\begin{align*}
\Delta_1(x+S_\mu)&\le n^{1/2-\varepsilon}\frac{\Delta_1(x+S_\mu)}{1+|x^{(2)}-x^{(1)}+S^{(2)}_\mu-S^{(1)}_\mu|}\\
&\le n^{1/2-\varepsilon}\frac{\Delta_2(x)}{2+|x^{(2)}-x^{(1)}|}\frac{\Delta_2(S_\mu)}{2+|S^{(2)}_\mu-S^{(1)}_\mu|}
 \end{align*}
It is easy to see that
$$
\frac{\Delta_2(S_\mu)}{2+|S^{(2)}_\mu-S^{(1)}_\mu|}\leq \sum_{i_1,\ldots,i_k}C_{(i_1,\ldots,i_k)}\prod\left(|S_\mu^{(r)}|\right)^{i_r},
$$
where the sum is taken over all $i_1,\ldots,i_k$ such that all $i_1,i_2\le k-2, i_3,\ldots i_k\leq k-1$, there is at most one
$i_j=k-1$, and the sum $\sum i_r$ does not exceed  $k(k-1)/2$. Thus,
\begin{align*}
&\mathbf{E}\left[\left|\frac{\Delta_2(S_\mu)}{2+|S^{(2)}_\mu-S^{(1)}_\mu|}\right|;\mu\leq n^{1-\varepsilon}\right]
\leq \sum_{i_1,\ldots,i_k} C_{(i_1,\ldots,i_k)}
\mathbf{E}\left[\prod_{r=1}^k\left(|S_\mu^{(r)}|\right)^{i_r};\mu\leq n^{1-\varepsilon}\right]\\
&\hspace{0.5cm}\leq \sum_{i_1,\ldots,i_k} C_{(i_1,\ldots,i_k)}
\mathbf{E}\left[\left(|S_\mu^{(1)}|\right)^{i_1}\left(|S_\mu^{(2)}|\right)^{i_2};\mu\leq n^{1-\varepsilon}\right]
\prod_{r=3}^k\mathbf{E}\left(M_n^{(r)}\right)^{i_r}.
\end{align*}
Since $i_1\le k-2$ and $i_2\le k-2$, we can apply  the H{\"o}lder inequality, which gives
$$
\mathbf{E}\left[\left(|S_\mu^{(1)}|\right)^{i_1}\left(|S_\mu^{(2)}|\right)^{i_2};\mu\leq n^{1-\varepsilon}\right]
\leq n^{(i_1+i_2)/2}\exp\{-Cn^\varepsilon\}.
$$
Consequently,
\begin{equation}\label{L2.4}
\mathbf{E}[|\Delta(x+S_\mu)|;\mu\leq n^{1-\varepsilon}]\leq
c \Delta_2(x)n^{k(k-1)/2}\exp\{-Cn^\varepsilon\}.
\end{equation}
Plugging (\ref{L2.3}) and (\ref{L2.4}) into (\ref{L2.2}), we arrived at the conclusion.
\end{proof}
\begin{lemma}\label{lem3}
There exists a constant $C$ such that
$$
\mathbf{E}[\Delta(x+S_n);T>n]\leq C\Delta_1(x)
$$
for all $n\geq1$ and all $x\in W$.
\end{lemma}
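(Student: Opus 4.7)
The plan is to combine optional stopping with Lemmas~\ref{lem0} and~\ref{lem2} to set up a self-similar inequality for the quantity we want to bound. The moment assumption ensures $\mathbf{E}|\Delta(x+S_n)|\le\Delta_1(x)\mathbf{E}\Delta_1(S_n)<\infty$, so $\Delta(x+S_n)$ is an integrable martingale. Stopping it at the bounded time $T\wedge n$ and using $\Delta(x+S_T)\le 0$ gives
\begin{equation*}
\mathbf{E}[\Delta(x+S_n);T>n]=\Delta(x)+\psi(n),\qquad\psi(n):=\mathbf{E}[|\Delta(x+S_T)|;T\le n],
\end{equation*}
reducing the lemma to showing $\psi(n)\le C\Delta_1(x)$ uniformly in $n$.

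Write $M=n^{1-\varepsilon}$ and split the left-hand side according to $\nu_n\le M$ or $\nu_n>M$. The piece on $\{\nu_n>M\}$ is bounded by $c\Delta_1(x)e^{-Cn^\varepsilon}$ via Lemma~\ref{lem2}. On $\{T>n,\nu_n\le M\}$ one automatically has $T>\nu_n$, so $y:=x+S_{\nu_n}\in W\cap W_{n,\varepsilon}$. Applying the strong Markov property at $\nu_n$ and the bound
\begin{equation*}
g(y,k):=\mathbf{E}_y[\Delta(y+S_k);T_y>k]=\Delta(y)+|\mathbf{E}_y[\Delta(y+S_{T_y});T_y\le k]|\le(1+Ck^{-\gamma})\Delta(y)
\end{equation*}
(coming from a second use of optional stopping plus Lemma~\ref{lem0}, valid for $y\in W\cap W_{k,\varepsilon}$, in particular for $y=x+S_{\nu_n}$ and $k=n-\nu_n\ge n/2$ since $W_{n,\varepsilon}\subset W_{k,\varepsilon}$) reduces this piece to at most $(1+Cn^{-\gamma})\mathbf{E}[\Delta(x+S_{\nu_n});\nu_n\le M,\,T>\nu_n]$.

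The remaining expectation is controlled by optional stopping once more at the bounded stopping time $\sigma:=\nu_n\wedge T\wedge M$. Decomposing $\Delta(x)=\mathbf{E}\Delta(x+S_\sigma)$ by cases, using $\Delta(x+S_T)\le 0$ on the $\sigma=T$ piece, and bounding $\mathbf{E}[|\Delta(x+S_M)|;\nu_n>M]\le c\Delta_1(x)e^{-Cn^\delta}$ by a direct rerun of the proof of Lemma~\ref{lem2} at time $M$ (the key fact being that $\{\nu_n>M\}\subset\bigcup_{l,i}G_{l,i}$ carries over verbatim), one arrives at
\begin{equation*}
\mathbf{E}[\Delta(x+S_{\nu_n});\nu_n\le M,T>\nu_n]\le\Delta(x)+\psi(M)+c\Delta_1(x)e^{-Cn^\delta}.
\end{equation*}
Collecting all bounds yields the self-similar inequality
\begin{equation*}
\psi(n)\le(1+Cn^{-\gamma})\psi(n^{1-\varepsilon})+C'\Delta_1(x)\bigl(n^{-\gamma}+e^{-Cn^{\min(\varepsilon,\delta)}}\bigr).
\end{equation*}

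The main obstacle is closing this iteration. Setting $n_0=n$ and $n_{j+1}=n_j^{1-\varepsilon}$, one iterates $O(\log\log n)$ steps until $n_k\le K$ for some fixed threshold $K$, where the trivial base bound $\psi(K)\le C_K\Delta_1(x)$ applies. For uniform boundedness of $\psi(n)$ one must verify that the multiplicative product $\prod_j(1+Cn_j^{-\gamma})$ and the induced additive sum both remain $O(1)$. Since $n_j^{-\gamma}=n^{-\gamma(1-\varepsilon)^j}$, only $O(1)$ terms of $\sum_jn_j^{-\gamma}$ are of order $1$ (those with $j$ near $k$, where $n_j\approx K$), while earlier terms decay doubly geometrically; the same logic controls $\sum_je^{-Cn_j^{\min(\varepsilon,\delta)}}$. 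Both series are therefore finite, the multiplicative product is bounded, and one concludes $\psi(n)\le C\Delta_1(x)$ for all $n\ge 1$, finishing the proof.
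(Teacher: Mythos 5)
Your proof is correct and follows the same core strategy as the paper: split on $\{\nu_n\le n^{1-\varepsilon}\}$ vs.\ $\{\nu_n>n^{1-\varepsilon}\}$, dispose of the latter via Lemma~\ref{lem2}, apply the strong Markov property at $\nu_n$ together with Lemma~\ref{lem0} to the former, obtain the self-similar inequality, and close the iteration using the doubly-geometric decay of $n_j^{-\gamma}$ and $e^{-Cn_j^\varepsilon}$ down to a fixed threshold (the paper merely asserts the product and sum ``remain uniformly bounded''; you spell out why, which is a genuine improvement in exposition). The one structural difference is that you avoid the submartingale Lemma~\ref{lem-1} entirely: where the paper uses the monotonicity of $m\mapsto\mathbf E[\Delta(y+S_m);T_y>m]$ to pass from time $n-k$ to time $n$, you instead apply Lemma~\ref{lem0} directly at time $n-\nu_n$ (legitimate, since $W_{n,\varepsilon}\subset W_{n-\nu_n,\varepsilon}$ and $n-\nu_n\ge n/2$); and where the paper invokes optional stopping for the nonnegative submartingale at $\nu_n\wedge n^{1-\varepsilon}$, you instead stop the martingale $\Delta(x+S_j)$ at $\nu_n\wedge T\wedge n^{1-\varepsilon}$ and pay for the $\{\nu_n>n^{1-\varepsilon}\}$ piece with an extra run of Lemma~\ref{lem2} at time $n^{1-\varepsilon}$. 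The paper's route is marginally shorter; yours is a clean alternative that trades one use of Lemma~\ref{lem-1} for one extra exponentially small error term. One small slip: on $\{T>\nu_n\}$ you only know $x+S_{\nu_n}\in\{\Delta>0\}\cap W_{n,\varepsilon}$, not $W\cap W_{n,\varepsilon}$; this is harmless since Lemma~\ref{lem0}'s bound \eqref{eq0} is stated precisely on $\{\Delta>0\}\cap W_{n,\varepsilon}$.
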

\begin{proof}
We first split the  expectation into 2 parts,
\begin{align*}
&\mathbf E [\Delta(x+S_n);T>n]=E_1(x)+E_2(x)\\
&\hspace{1cm}=\mathbf E\left[\Delta(x+S_n);T>n,\nu_n\leq n^{1-\varepsilon}\right]
+\mathbf E\left[\Delta(x+S_n);T>n,\nu_n> n^{1-\varepsilon}\right].
\end{align*}
By Lemma~\ref{lem2}, the second term on the right hand side is bounded by
$$
E_2(x)\le c\Delta_1(x)\exp\{-Cn^\varepsilon\}.
$$ 
Using Lemma~\ref{lem-1}, we have
\begin{align*}
 E_1(x)&\le\sum_{i=1}^{n^{1-\varepsilon}}
\int_{W_{n,\varepsilon}}\mathbf P\{\nu_n=k,T>k, x+S_k\in dy\}
\mathbf E[\Delta(y+S_{n-k});T>n-k]\\
&\le\sum_{i=1}^{n^{1-\varepsilon}}
\int_{W_{n,\varepsilon}}\mathbf P\{\nu_n=k,T>k, x+S_k\in dy\}
\mathbf E[\Delta(y+S_{n});T>n]\\
&=\sum_{i=1}^{n^{1-\varepsilon}}
\int_{W_{n,\varepsilon}}\mathbf P\{\nu_n=k,T>k, x+S_k\in dy\}
\left(\Delta(y)-\mathbf E[\Delta(y+S_{T});T\leq n]\right),
\end{align*}
in the last step we used the fact that $\Delta(x+S_n)$ is a martingale.
Then, by Lemma~\ref{lem0},
\begin{eqnarray*}
E_1(x)&\le &\left(1+\frac{C}{n^\gamma}\right)
\sum_{i=1}^{n^{1-\varepsilon}}
\int_{W_{n,\varepsilon}}\mathbf P\{\nu_n=k,T>k, x+S_k\in dy\}\Delta(y)\\
&\le &\left(1+\frac{C}{n^\gamma}\right)
\mathbf E[\Delta(x+S_{\nu_n}); \nu_n\le n^{1-\varepsilon},T>\nu_n].
\end{eqnarray*}
Using Lemma~\ref{lem-1} once again, we arrive at the bound
$$
E_1(x)\le \left(1+\frac{C}{n^\gamma}\right)
\mathbf E[\Delta(x+S_{n^{1-\varepsilon}}); T>n^{1-\varepsilon}].
$$
As a result we have
\begin{align}\label{L3.1}
\nonumber
&\mathbf E [\Delta(x+S_n);T>n]\\
&\hspace{1cm}\leq
\left(1+\frac{C}{n^\gamma}\right)
\mathbf E[\Delta(x+S_{n^{1-\varepsilon}}); T>n^{1-\varepsilon}]
+c\Delta_1(x)\exp\{-Cn^\varepsilon\}.
\end{align}
Iterating this procedure $m$ times, we obtain
\begin{align}\label{L3.2}
\nonumber
&\mathbf E [\Delta(x+S_n);T>n]\leq
\prod_{j=0}^m\left(1+\frac{C}{n^{\gamma(1-\varepsilon)^j}}\right)\times\\
&\hspace{0.1cm}\left(\mathbf E[\Delta(x+S_{n^{(1-\varepsilon)^{m+1}}}); T>n^{(1-\varepsilon)^{m+1}}]
+c\Delta_1(x)\sum_{j=0}^m\exp\{-Cn^{\varepsilon(1-\varepsilon)^j}\}\right).
\end{align}
Choosing $m=m(n)$ such that $n^{(1-\varepsilon)^{m+1}}\leq10$ and noting that the product and the sum remain 
uniformly bounded, we finish the proof of the lemma.
\end{proof}
\begin{lemma}\label{lem33}
The function $V^{(T)}(x):=\lim_{n\to\infty}\mathbf{E}[\Delta(x+S_n);T>n]$ has the following properties:
\begin{equation}
\label{L33.1}
\Delta(x)\leq V^{(T)}(x)\leq C\Delta_1(x)
\end{equation}
and
\begin{equation}
\label{L33.2}
V^{(T)}(x)\sim\Delta(x)\quad\text{if}\quad\min_{j<k}(x^{(j+1)}-x^{(j)})\to\infty.
\end{equation}
\end{lemma}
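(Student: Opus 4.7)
My plan is to get \eqref{L33.1} essentially for free from the submartingale property plus the uniform upper bound of Lemma~\ref{lem3}, and to prove \eqref{L33.2} by refining the iteration used in that same proof. For \eqref{L33.1}: by Lemma~\ref{lem-1}, $Y_n=\Delta(x+S_n)\mathbf{1}\{T>n\}$ is a submartingale, so $\mathbf E[Y_n]$ is non-decreasing in $n$; Lemma~\ref{lem3} shows it is bounded above by $C\Delta_1(x)$, so the limit defining $V^{(T)}(x)$ exists and satisfies $V^{(T)}(x)\le C\Delta_1(x)$. Applying the submartingale inequality at time $0$ gives $V^{(T)}(x)\ge \mathbf E[Y_0]=\Delta(x)$.

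For \eqref{L33.2}, the idea is to revisit the iteration \eqref{L3.1}, stopping the recursion not at a bounded scale (as in Lemma~\ref{lem3}) but at the largest scale $R=R(x)$ at which $x$ itself belongs to $W_{R,\varepsilon}$. Writing $M=\min_{j<k}(x^{(j+1)}-x^{(j)})$ and choosing $R\asymp M^{1/(1/2-\varepsilon)}$, we have $x\in W_{R,\varepsilon}$ and $R\to\infty$ as $M\to\infty$. At this terminal scale we can apply Lemma~\ref{lem0} to $x$ directly: combined with the martingale identity
\[
\mathbf E[\Delta(x+S_R);T>R]=\Delta(x)-\mathbf E[\Delta(x+S_T);T\le R],
\]
it gives $\mathbf E[\Delta(x+S_R);T>R]\le\Delta(x)(1+CR^{-\gamma})$, replacing the cruder terminal bound $\le C\Delta_1(x)$ that was unavoidable in Lemma~\ref{lem3} when the iteration had to be stopped at a bounded horizon.

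The main obstacle will be showing that the accumulated product $\prod_j(1+Cn_j^{-\gamma})$ and tail sum $\sum_j e^{-Cn_j^\varepsilon}$ generated by the iteration remain controlled by $1+CR^{-\gamma'}$ and $Ce^{-CR^{\varepsilon'}}$ respectively (for some $\gamma',\varepsilon'>0$), uniformly in the starting scale $n$. With $n_j=n^{(1-\varepsilon)^j}$ and the recursion terminated at the first $m$ with $n_m$ of order $R$, reindexing $k=m-j$ shows that $n_{m-k}^{-\gamma}=n_m^{-\gamma(1-\varepsilon)^{-k}}$ decays super-geometrically in $k$ as soon as $n_m\asymp R$ is large; the analogous computation handles the exponential sum. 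Assembling these estimates yields, uniformly in $n\ge R$,
\[
\mathbf E[\Delta(x+S_n);T>n]\le \Delta(x)\bigl(1+CR^{-\gamma'}\bigr)+C\Delta_1(x)e^{-CR^{\varepsilon'}}.
\]
Sending $n\to\infty$ transfers this bound to $V^{(T)}(x)$. Since $\Delta_1(x)/\Delta(x)=\prod_{i<j}(1+(x^{(j)}-x^{(i)})^{-1})$ is bounded and in fact tends to $1$ as $M\to\infty$, and $R\to\infty$ in that limit, both error terms are $o(\Delta(x))$. Combined with the matching lower bound from \eqref{L33.1}, this yields $V^{(T)}(x)\sim\Delta(x)$.
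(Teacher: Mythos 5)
Your proof is correct, and it follows the same core strategy as the paper's proof (the submartingale bound for the lower estimate in \eqref{L33.1}, Lemma~\ref{lem3} for the upper, and iterating \eqref{L3.1} together with Lemma~\ref{lem0} at the terminal scale for \eqref{L33.2}), but the bookkeeping for \eqref{L33.2} is genuinely different. The paper fixes the terminal scale $n_0 = n_0(\delta)$ large enough that the accumulated product and exponential tail sum in \eqref{L3.2} are bounded by $1+\delta$ and $\delta$ for all $m$, sends $m\to\infty$ for fixed $n_0$, and only then observes that $\mathbf E[\Delta(x+S_{n_0});T>n_0]\sim\Delta(x)$ as the gaps diverge; the terminal estimate is thus qualitative and wrapped in an $\varepsilon$--$\delta$ argument. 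You instead let the terminal scale $R=R(x)$ grow with $\min_{j<k}(x^{(j+1)}-x^{(j)})$ so that $x\in W_{R,\varepsilon}$, apply Lemma~\ref{lem0} directly there to get $\mathbf E[\Delta(x+S_R);T>R]\le\Delta(x)(1+CR^{-\gamma})$, and control the accumulated errors by the same super-geometric decay that underlies the paper's choice of $n_0$. This buys you an explicit quantitative form $V^{(T)}(x)\le\Delta(x)(1+CR^{-\gamma'})+C\Delta_1(x)e^{-CR^{\varepsilon'}}$, which is slightly sharper than the paper's unquantified $o(1)$, and it makes the role of Lemma~\ref{lem0} at the terminal scale fully explicit (whereas the paper leaves the step ``$\mathbf E[\Delta(x+S_{n_0});T>n_0]\sim\Delta(x)$'' to the reader). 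One small point to keep straight: since the iterates $n^{(1-\varepsilon)^m}$ form a discrete set, the actual terminal scale will lie in some window $[R,R^{1/(1-\varepsilon)}]$, so you should choose $R$ a constant power below $M^{1/(1/2-\varepsilon)}$ (or absorb this into constants) to ensure $x\in W_{R',\varepsilon}$ for every admissible terminal scale $R'$; this is cosmetic and does not affect the argument.
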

\begin{proof}
Since $\Delta(x+S_n){\bf 1}\{T_x>n\}$ is a submartingale, the limit $\lim_{n\to\infty}\mathbf{E}[\Delta(x+S_n);T>n]$
exists, and the function $V^{(T)}$ satisfies $V^{(T)}(x)\geq \Delta(x),\quad x\in\{y:\Delta(y)>0\}$. The upper bound
in (\ref{L33.1}) follows immediately from Lemma \ref{lem3}.

To show (\ref{L33.2}) it suffices to obtain an upper bound of the form $(1+o(1))\Delta(x)$.
Furthermore, because of monotonicity of $\mathbf{E}[\Delta(x+S_n);T>n]$, we can get such a bound for a specially chosen
subsequence $\{n_m\}$. Choose $\varepsilon$ so that (\ref{L3.2}) is valid, and set $n_m=(n_0)^{(1-\varepsilon)^{-m}}$.
Then we can rewrite (\ref{L3.2}) in the following form
\begin{align*}
&\mathbf E [\Delta(x+S_{n_m});T>n_m]\leq\\
&\hspace{1cm}\prod_{j=0}^{m-1}\left(1+\frac{C}{n_j^\gamma}\right)\times
\left(\mathbf E[\Delta(x+S_{n_0}); T>n_0]
+c\Delta_1(x)\sum_{j=0}^{m-1}\exp\{-Cn_j^{\varepsilon}\}\right).
\end{align*}
It is clear that for every $\delta>0$ we can choose $n_0$ such that
$$
\prod_{j=0}^{m-1}\left(1+\frac{C}{n_j^\gamma}\right)\leq 1+\delta
\quad\text{and}\quad
\sum_{j=0}^{m-1}\exp\{-Cn_j^{\varepsilon}\}\leq\delta
$$
for all $m\geq1$. Consequently,
$$
V^{(T)}(x)=\lim_{m\to\infty}\mathbf E [\Delta(x+S_{n_m});T>n_m]
\leq (1+\delta)\mathbf E[\Delta(x+S_{n_0}); T>n_0]+C\delta\Delta_1(x).
$$
It remains to note that $E[\Delta(x+S_{n_0}); T>n_0]\sim\Delta(x)$ and that
$\Delta_1(x)\sim\Delta(x)$ as $\min_{j<k}(x^{(j+1)}-x^{(j)})\to\infty$.
\end{proof}


\subsection{Proof of Proposition~\ref{prop1}}
We start by showing that Lemma ~\ref{lem3} implies the integrability of
 $\Delta(x+S_{\tau_x})$. Indeed, setting $\tau_x(n):=\min\{\tau_x,n\}$
and $T_x(n):=\min\{T_x,n\}$, and using the fact that $|\Delta(x+S_n)|$ is a submartingale, we have
\begin{align*}
\mathbf{E}|\Delta(x+S_{\tau_x(n)})|&\leq \mathbf{E}|\Delta(x+S_{T_x(n)})|\\
&=
\mathbf{E}[\Delta(x+S_n){\rm 1}\{T_x(n)>n\}]-\mathbf{E}[\Delta(x+S_{T_x}){\rm 1}\{T_x\le n\}].
\end{align*}
Since $\Delta(x+S_n)$ is a martingale, we have  
$$
\mathbf{E}[\Delta(x+S_T){\rm 1}\{T\le n\}]=
\mathbf{E}[\Delta(x+S_n){\rm 1}\{T\le n\}]=\Delta(x)-\mathbf{E}[\Delta(x+S_n){\rm 1}\{T>n\}].
$$
Therefore, we get
$$
\mathbf{E}|\Delta(x+S_{\tau_n})|\leq 2\mathbf{E}[\Delta(x+S_n){\rm 1}\{T>n\}]-\Delta(x).
$$
This, together with Lemma~\ref{lem3}, implies that the sequence
$\mathbf{E}[|\Delta(x+S_{\tau})|\mathbf 1\{\tau\le n\}]$ is uniformly bounded. Then, the
finiteness of the expectation
$\mathbf{E}|\Delta(x+S_\tau)|$ follows from the monotone convergence. 

To prove (a) note that since $\Delta(x+S_n)$ is a martingale, we have an equality 
$$
\mathbf{E}[\Delta(x+S_n);\tau_x>n]
=
\Delta(x)-\mathbf{E}[\Delta(x+S_n);\tau_x\le n]
=
\Delta(x)-\mathbf{E}[\Delta(x+S_{\tau_x});\tau_x\le n].
$$
Letting $n$ to infinity we obtain (a) by the dominated convergence theorem.

For (b) note that 
$$
\Delta(x+S_n) {\bf 1}\{\tau_x>n\}\le \Delta(y+S_n) {\bf 1}\{\tau_x>n\}\le \Delta(y+S_n) {\bf 1}\{\tau_y>n\}.
$$
Then letting $n$ to infinity and applying (a) we obtain (b).

(c) follows directly from Lemma~\ref{lem3}.

We now turn to the proof of (d). 
It follows from (\ref{L33.2}) and the inequality $\tau_x\leq T_x$ that
$$
V(x)\leq V^{(T)}(x)\leq (1+o(1))\Delta(x).
$$
Thus, we need to get a lower bound of the form $(1+o(1))\Delta(x)$.
We first note that 
$$
V(x)=\Delta(x)-\mathbf{E}[\Delta(x+S_{\tau_x})]\geq\Delta(x)-\mathbf{E}[\Delta(x+S_{\tau_x});T_x>\tau_x].
$$
Therefore, it is sufficient to show that 
\begin{equation}
\label{suff}
\mathbf{E}[\Delta(x+S_{\tau_x});T_x>\tau_x]=o(\Delta(x))
\end{equation}
under the condition $\min_{j<k}(x^{(j+1)}-x^{(j)})\to\infty$.

The sequence $V^{(T)}(x+S_n){\bf 1}\{T_x>n\}$ is a non-negative martingale. 
Then, arguing as in Lemma~\ref{lem-1}, one can easily see
that $V^{(T)}(x+S_n){\bf 1}\{\tau_x>n\}$ is a supermartingale.

We bound $\mathbf E [V^{(T)}(x+S_n){\bf 1}\{\tau_x>n\}]$ from below using its supermartingale property.  This is similar to the Lemma~\ref{lem3}, where an upper bound has been obtained using submartingale properties of  $\Delta(x+S_n){\bf 1}\{T_x>n\}$. We have
\begin{align*}
&\mathbf{E}[V^{(T)}(x+S_n);\tau_x>n]\\
&\ge\sum_{i=1}^{n^{1-\varepsilon}}
\int_{W_{n,\varepsilon}}\mathbf P\{\nu_n=k,\tau_x>k, x+S_k\in dy\}
\mathbf E[V^{(T)}(y+S_{n-k});\tau_y>n-k]\\
&\ge\sum_{i=1}^{n^{1-\varepsilon}}
\int_{W_{n,\varepsilon}}\mathbf P\{\nu_n=k,\tau_x>k, x+S_k\in dy\}
\mathbf E[V^{(T)}(y+S_{n});\tau_y>n]\\
&=\sum_{i=1}^{n^{1-\varepsilon}}
\int_{W_{n,\varepsilon}}\hspace{-0.3cm}\mathbf P\{\nu_n=k,\tau_x>k, x+S_k\in dy\}
\left(V^{(T)}(y)-\mathbf E[V^{(T)}(y+S_{\tau_y});\tau_y\leq n]\right).
\end{align*}
Then, applying (\ref{L33.1}) and (\ref{eq00}), we obtain
$$
\mathbf{E}[V^{(T)}(x+S_n);\tau_x>n]\geq
\left(1-\frac{C}{n^\gamma}\right)\mathbf{E}[V^{(T)}(x+S_{n^{1-\varepsilon}});\tau_x>n^{1-\varepsilon},\nu_n\leq n^{1-\varepsilon}].
$$
Using now Lemma~\ref{lem2}, we have
$$
\mathbf{E}[V^{(T)}(x+S_n);\tau_x>n]\geq
\left(1-\frac{C}{n^\gamma}\right)\mathbf{E}[V^{(T)}(x+S_{n^{1-\varepsilon}});\tau_x>n^{1-\varepsilon}]
-C\Delta_1(x)e^{-Cn^\varepsilon}.
$$
Starting from $n_0$ and iterating this procedure, we obtain for the sequence $n_m=(n_0)^{(1-\varepsilon)^{-m}}$
the inequality
\begin{align*}
\mathbf E [V^{(T)}(x+S_{n_m});\tau_x>n_m]
\geq
\prod_{j=1}^m\left(1-\frac{C}{n_0^{\gamma(1-\varepsilon)^{-j}}}\right)
\mathbf E[V^{(T)}(x+S_{n_0}); \tau_x>n_0]\\
-c\Delta_1(x)\sum_{j=1}^m\exp\{-Cn_0^{\varepsilon(1-\varepsilon)^{-j}}\}.
\end{align*}
Next we fix a constant $\delta>0$ and pick $n_0$ such that 
$$
\prod_{j=1}^\infty\left(1-\frac{C}{n_0^{\gamma(1-\varepsilon)^{-j}}}\right)\ge (1-\delta),\quad 
c\sum_{j=1}^\infty\exp\{-Cn_0^{\varepsilon(1-\varepsilon)^{-j}}\}\le \delta. 
$$
This is possible since  both the series and the product converge. 
Together with the fact that  $V^{(T)}(x+S_n){\bf 1}\{\tau_x>n\}$ is a supermartingale 
and the with lower bound in (\ref{L33.1})  this gives us,
$$
\lim_{n\to\infty}\mathbf E [V^{(T)}(x+S_n);\tau_x>n]\ge (1-\delta)\mathbf E[\Delta(x+S_{n_0}); \tau_x>n_0]-\delta\Delta_1(x).
$$
As is not difficult to see  $\mathbf E[\Delta(x+S_{n_0}); \tau_x>n_0]\sim \Delta(x)$ and $\Delta_1(x)\sim\Delta(x)$ 
as $\min_{2\leq j\leq k}(x^{(j)}-x^{(j-1)})\to \infty$. Therefore, since $\delta>0$ is arbitrary we have a lower asymptotic bound 
\begin{equation}\label{P3}
\lim_{n\to\infty}\mathbf E [V^{(T)}(x+S_n);\tau_x>n]\ge(1-o(1))\Delta(x), 
\end{equation}
provided that $\min_{2\leq j\leq k}(x^{(j)}-x^{(j-1)})\to\infty$.

Using the martingale property of $V^{(T)}(x+S_n){\bf 1}\{T_x>n\}$ and noting that
$$
\{T_x>n\}=\{\tau_x>n\}\bigcup\left(\bigcup_{k=1}^n\{T_x>n,\,\tau_x=k\}\right),
$$ 
we get
\begin{align*}
V^{(T)}(x)&=\mathbf{E}[V^{(T)}(x+S_n){\bf 1}\{T_x>n\}]\\
&=\mathbf{E}[V^{(T)}(x+S_n);\tau_x>n]+\sum_{k=1}^n\mathbf{E}[V^{(T)}(x+S_n){\bf 1}\{T_x>n\};\tau_x=k]\\
&=\mathbf{E}[V^{(T)}(x+S_n);\tau_x>n]+\sum_{k=1}^n\mathbf{E}[V^{(T)}(x+S_k){\bf 1}\{T_x>k\};\tau_x=k]\\
&=\mathbf{E}[V^{(T)}(x+S_n);\tau_x>n]+\mathbf{E}[V^{(T)}(x+S_{\tau_x}){\bf 1}\{T_x>\tau_x\};\tau_x\leq n].
\end{align*}
Letting $n\to\infty$, we obtain
\begin{equation}\label{P4}
\lim_{n\to\infty}\mathbf E [V^{(T)}(x+S_n);\tau_x>n]=V^{(T)}(x)-
\mathbf E [V^{(T)}(x+S_{\tau_x});T_x>\tau_x].
\end{equation}
Combining (\ref{L33.2}), (\ref{P3}) and (\ref{P4}), we have $E [V^{(T)}(x+S_{\tau_x});T_x>\tau_x]=o(\Delta(x))$.
Now (\ref{suff}) follows from the obvious bound
$$
\mathbf E [\Delta(x+S_{\tau_x});T_x>\tau_x]\le \mathbf E [V^{(T)}(x+S_{\tau_x});T_x>\tau_x].
$$
Thus, the proof of (d) is finished.

To prove (e) note that it follows from (d) that there exists $R$ and $\delta>0$ such that 
$V(x)\ge \delta$ on the set $S_R=\{x: \min_{2\leq j\leq k}(x^{(j)}-x^{(j-1)})>R\}.$ Then, with a positive probability $p$ the 
random walk can reach this set after $N$ steps if $N$ is sufficiently large. Therefore, 
\begin{align*}
V(x)=\sup_{n\geq1}\mathbf{E}[\Delta(x+S_n);\tau_x>n]\ge \int_{S_R}\mathbf P\{x+S_N\in dy\}\sup_{n\geq1}\mathbf{E}[\Delta(y+S_n);\tau_y>n]
\\
= \int_{S_R}\mathbf P\{x+S_N\in dy\}V(y)\ge \delta p>0.
\end{align*}
This completes the proof of the proposition. 

\section{Coupling}
We start by formulating a classical result on the normal approximation of random walks.
\begin{lemma}
\label{lem4}
If $\mathbf{E}\xi^{2+\delta}<\infty$ for some $\delta\in(0,1)$, then one can define a Brownian motion $B_t$ 
on the same probability space such that, for any $a$ satisfying $0<a<\frac{\delta}{2(2+\delta)}$,
\begin{equation}\label{L4}
\mathbf{P}\left(\sup_{u\leq n}|S_{[u]}-B_{u}|\geq n^{1/2-a}\right)=o\left(n^{2a+a\delta-\delta/2}\right).
\end{equation}
\end{lemma}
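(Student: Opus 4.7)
The statement is essentially a restatement in tail-probability form of Major's strong approximation theorem from \cite{M76}. My plan is: (i) invoke Major's coupling to obtain a $(2+\delta)$-moment bound on $S_n - B_n$; (ii) upgrade this to a uniform-in-$k$ moment bound; (iii) convert to a tail bound by Markov's inequality.

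The first step is to appeal to the construction in \cite{M76}, which, under $\mathbf{E}|\xi|^{2+\delta}<\infty$ with $\delta\in(0,1)$, produces on a common probability space the i.i.d.\ sequence $(\xi_i)$ and a standard Brownian motion $B$ satisfying
\begin{equation*}
\mathbf{E}|S_n - B_n|^{2+\delta} = o(n).
\end{equation*}
This is the quantitative content underlying the a.s.\ rate $S_n - B_n = o(n^{1/(2+\delta)})$ usually attributed to Major.

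The second step is to pass to a supremum. I would write
\begin{equation*}
\sup_{u\le n}|S_{[u]}-B_u| \;\le\; \sup_{k\le n}|S_k - B_k| + \sup_{u\le n}|B_u - B_{[u]}|,
\end{equation*}
and estimate each term separately. The second term is controlled by the Brownian modulus of continuity on unit subintervals: its maximum over $k\le n$ is of order $\sqrt{\log n}$ with overwhelming probability, and is hence negligible compared to $n^{1/2-a}$. For the first term, the block-wise (partial-sum-of-blocks) structure of Major's construction upgrades the pointwise moment estimate to $\mathbf{E}\sup_{k\le n}|S_k - B_k|^{2+\delta}=o(n)$ without losing the $o(n)$ rate.

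The third step is a one-line Markov argument:
\begin{equation*}
\mathbf{P}\!\left(\sup_{u\le n}|S_{[u]} - B_u| \ge n^{1/2-a}\right)
\le \frac{o(n)}{n^{(2+\delta)(1/2-a)}}
= o\!\left(n^{2a + a\delta - \delta/2}\right),
\end{equation*}
and the constraint $a<\delta/(2(2+\delta))$ is exactly what ensures the exponent $2a+a\delta-\delta/2$ is negative, so that the bound is genuinely $o(1)$ and improves as advertised. The main technical obstacle is the uniform-in-$k$ moment bound under the \emph{minimal} moment assumption $\mathbf{E}|\xi|^{2+\delta}$; this is the point at which the specific structure of Major's coupling matters, as a crude Skorokhod embedding would require strictly more than $2+\delta$ moments to absorb the fluctuations of the embedding times into the $\sup$.
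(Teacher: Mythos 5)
Your proposal is correct and takes essentially the same route as the paper, whose entire proof consists of citing Theorem~2 of Major \cite{M76} (see also Theorem~2 of \cite{Bor83}) together with exactly the exponent arithmetic you perform, $1-(2+\delta)(1/2-a)=2a+a\delta-\delta/2$, and the condition $a<\delta/(2(2+\delta))$ making this negative. The one remark worth making is that your step (ii) — the uniform-in-$k$ estimate — is precisely the content of the cited results rather than something to be re-derived: Major's and Borisov's theorems are already stated for $\sup_{k\le n}|S_k-B_k|$ in tail form with a numerator of order $o(n)$ (coming from a truncated $(2+\delta)$-moment), so one should quote that form directly instead of passing through the pointwise moment bound $\mathbf{E}|S_n-B_n|^{2+\delta}=o(n)$ and an unproved supremum upgrade.
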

This statement easily follows from Theorem~2 of \cite{M76}, see also Theorem~2 of \cite{Bor83}.

\begin{lemma}
\label{lem6}
There exists a finite constant $C$ such that
\begin{equation}\label{L6.1}
\mathbf{P}(\tau^{bm}_{y}>n)\leq C\frac{\Delta(y)}{n^{k(k-1)/4}},\quad y\in W.
\end{equation}
Moreover,
\begin{equation}\label{L6.2}
\mathbf{P}(\tau^{bm}_{y}>n)\sim \varkappa\frac{\Delta(y)}{n^{k(k-1)/4}},\end{equation}
uniformly in $y\in W$ satisfying $|y|\le \theta_n\sqrt{n}$ with some $\theta_n\to0$.  
Finally, the density $b_{t}(y,z)$ of the probability 
$
\mathbf{P}(\tau^{bm}_{y}>t, B_t\in dz)
$ is 
\begin{equation}\label{L6.3}
b_t(y,z)\sim K t^{-k/2 }
e^{-|z|^2/(2t)}\Delta(y)\Delta(z) t^{-\frac{k(k-1)}{2}}
\end{equation}
uniformly in $y,z \in W$ satisfying $|y|\le \theta_n\sqrt{n}$  and  $|z|\le \sqrt {n/\theta_n}$ with some $\theta_n\to0$. 
Here,
\begin{align*}
K=(2\pi )^{-k/2 }\prod_{l=0}^{k-1}\frac{1}{l!};
\ \varkappa=K\frac{1}{k!}\int_{\mathbf R^k}e^{-|x|^2/2}|\Delta(x)|dx=
K\frac{1}{k!}
2^{3k/2}\prod_{j=1}^k\Gamma(1+j/2).
\end{align*}
\end{lemma}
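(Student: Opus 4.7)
The foundation of the entire lemma is the Karlin–McGregor formula, which for $k$ independent Brownian motions with starting point $y\in W$ gives the non-collision density as a determinant of one-dimensional Gaussian kernels:
$$
b_t(y,z)=\det\bigl(p_t(y^{(i)},z^{(j)})\bigr)_{1\le i,j\le k},
\quad p_t(a,b)=\tfrac{1}{\sqrt{2\pi t}}e^{-(b-a)^2/(2t)}.
$$
Factoring the Gaussians yields
$$
b_t(y,z)=(2\pi t)^{-k/2}\,e^{-|y|^2/(2t)}e^{-|z|^2/(2t)}\det\!\bigl(e^{y^{(i)}z^{(j)}/t}\bigr).
$$
So the whole problem reduces to understanding the scalar determinant $\det(e^{u_iv_j})$ in the regime where $u=y/\sqrt t$ and $v=z/\sqrt t$ are small, and to obtaining a uniform upper bound on it for arbitrary $u,v\in W$.

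For the asymptotic statement (\ref{L6.3}), I would Taylor expand $e^{u_iv_j}=\sum_{n\ge 0}(u_iv_j)^n/n!$ and use multilinearity of the determinant:
$$
\det(e^{u_iv_j})=\sum_{(n_1,\ldots,n_k)}\Bigl(\prod_j\tfrac{1}{n_j!}\Bigr)\det\bigl(u_i^{n_j}v_j^{n_j}\bigr).
$$
Terms with any repeated $n_j$ vanish, and the minimal surviving multi-index is the permutation orbit of $(0,1,\ldots,k-1)$, which yields exactly $\Delta(u)\Delta(v)/\prod_{l=0}^{k-1}l!$ after the Vandermonde factorization of $\det(u_i^{l})\det(v_j^{l})$. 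All higher-order contributions are of order $|u|^{r}|v|^{r}$ with $r>k(k-1)/2$, hence negligible as $|u|,|v|\to 0$. Inserting this back gives (\ref{L6.3}) with $K=(2\pi)^{-k/2}\prod_{l=0}^{k-1}1/l!$, uniformly in the claimed regime.

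To deduce the tail asymptotic (\ref{L6.2}), I would integrate $b_t(y,z)$ over $W$ using the substitution $z=\sqrt t\, w$, so that $dz=t^{k/2}dw$ and $\Delta(z)=t^{k(k-1)/4}\Delta(w)$. Under $|y|\le\theta_t\sqrt t$, the expansion (\ref{L6.3}) holds on a large ball in $w$ that captures all but an exponentially small fraction of the Gaussian mass, so
$$
\mathbf{P}(\tau_y^{bm}>t)\sim K\,\Delta(y)\,t^{-k(k-1)/4}\int_W e^{-|w|^2/2}\Delta(w)\,dw.
$$
Since $|\Delta|$ is symmetric under permutations of its arguments and $W$ is one of $k!$ chambers, $\int_W e^{-|w|^2/2}\Delta(w)\,dw=\tfrac{1}{k!}\int_{\mathbf R^k}e^{-|w|^2/2}|\Delta(w)|\,dw$, which is a classical Selberg–Mehta integral evaluating to $2^{3k/2}\prod_{j=1}^k\Gamma(1+j/2)$; this produces $\varkappa$ as stated.

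The main obstacle is the global upper bound (\ref{L6.1}), since no smallness of $y$ is assumed there. By Brownian scaling $\mathbf{P}(\tau_y^{bm}>t)=\mathbf{P}(\tau_{y/\sqrt t}^{bm}>1)$ and $\Delta(y)=t^{k(k-1)/4}\Delta(y/\sqrt t)$, it suffices to prove $\mathbf{P}(\tau_{\tilde y}^{bm}>1)\le C\Delta(\tilde y)$ for all $\tilde y\in W$. Using the Doob $h$-transform for $k$ Brownian motions killed at the first collision, $\Delta(y+B_t)\mathbf 1\{\tau_y^{bm}>t\}$ is a non-negative martingale, so $\mathbf{E}[\Delta(\tilde y+B_1);\tau_{\tilde y}^{bm}>1]=\Delta(\tilde y)$. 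One then splits the integral $\int_W b_1(\tilde y,z)\,dz$ into the region where $\Delta(z)\ge c$ (handled directly by the martingale identity divided by $c$) and the complementary boundary region (handled by expanding the Karlin–McGregor determinant: a Cauchy–Schwarz/Hadamard bound shows $\det(e^{u_iv_j})\le \frac{|\Delta(u)\Delta(v)|}{\prod l!}e^{\,c(|u||v|)}$ in regions where one of the $\Delta$'s is small, because the boundary of $W$ forces cancellation of the determinant proportional to $\Delta(v)$). Both pieces yield the bound $C\Delta(\tilde y)$, completing the proof.
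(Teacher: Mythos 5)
The paper does not actually prove this lemma: it disposes of (\ref{L6.1}) by citing Varopoulos (Theorem~1 and formula (0.4.1) of \cite{Var99}) and of (\ref{L6.2})--(\ref{L6.3}) by citing Sections~5.1--5.2 of Grabiner \cite{Grab99}. What you have written is essentially a reconstruction of the proofs in those references (Karlin--McGregor determinant plus expansion of $\det(e^{u_iv_j})$), so the approach is the right one; the issue is whether your sketch actually delivers the \emph{uniformity} that the lemma asserts and that the rest of the paper relies on.

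The soft spot is your treatment of the remainder in the expansion of $\det(e^{u_iv_j})$. You discard the higher multi-indices because they contribute $O(|u|^{r}|v|^{r})$ with $r>k(k-1)/2$. But (\ref{L6.3}) is claimed uniformly over all $y\in W$ with $|y|\le\theta_n\sqrt n$, which includes points with very small consecutive gaps (this is exactly how the lemma is used in Lemma~\ref{lem5}, where $y\in W_{n,\varepsilon}$ has rescaled gaps as small as $n^{-\varepsilon}$). For such points $\Delta(u)\Delta(v)$ is far smaller than $|u|^{k(k-1)/2}|v|^{k(k-1)/2}$, so a remainder of size $|u|^{r}|v|^{r}$ is \emph{not} $o(\Delta(u)\Delta(v))$ and your argument does not give the stated uniform asymptotics. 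The repair is the factorization $\det\bigl(u_i^{n_j}\bigr)=\Delta(u)\,s_\lambda(u)$ into a Vandermonde times a Schur polynomial: every term of the expansion then carries the factor $\Delta(u)\Delta(v)$, and one gets
$\det(e^{u_iv_j})=\frac{\Delta(u)\Delta(v)}{\prod_{l=0}^{k-1}l!}\bigl(1+O(|u|\,|v|)\bigr)$
uniformly. Note also that in the regime of (\ref{L6.3}) only the product $|u|\,|v|\le\sqrt{\theta_n}$ tends to zero while $|v|$ itself may be large, so the remainder must be controlled through $|u|\,|v|$, not through $|u|,|v|$ separately as you state. The same factorization is what underlies the global bound $|\det(e^{u_iv_j})|\le C|\Delta(u)\Delta(v)|e^{c|u|\,|v|}$ that you invoke for (\ref{L6.1}) and that is also needed (but not mentioned) to control the tail $|z|>\sqrt{n/\theta_n}$ of the $z$-integral when you deduce (\ref{L6.2}); there one must additionally check that the exponent can be taken of the form $u\cdot v$ (or at worst $c|u|\,|v|$ with $c\le1$), since otherwise $e^{-|u|^2/2-|v|^2/2+c|u||v|}$ is not integrable against $\Delta(v)$ uniformly in $u$, and the final assembly of the two regions into $C\Delta(\tilde y)$ still needs the trivial bound $\mathbf P\le1$ when $\Delta(\tilde y)$ is large. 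With these points supplied your argument is complete and coincides with the cited proofs.
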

\begin{proof}
(\ref{L6.1}) has been proved by Varopoulos \cite{Var99}, see Theorem 1 and formula (0.4.1) there. The proof of (\ref{L6.2}) and (\ref{L6.3}) can be found in Sections 5.1-5.2 of \cite{Grab99}. 
\end{proof}

Using the coupling we can translate the results of Lemma~\ref{lem5} to 
the random walks setting when $y\in W_{n,\varepsilon}$.
\begin{lemma}\label{lem5}
For all sufficiently small $\varepsilon>0$,
\begin{equation}\label{L6.4}
\mathbf{P}(\tau_y>n)=\varkappa\Delta(y)n^{-k(k-1)/4}(1+o(1)),\quad\text{as }n\to\infty
\end{equation}
uniformly in $y\in W_{n,\varepsilon}$ such that $|y|\le \theta_n \sqrt n$ for 
some $\theta_n\to 0$. Moreover, there exists a constant $C$ such that  
\begin{equation}\label{L6.5}
\mathbf{P}(\tau_y>n)\le C \Delta(y)n^{-k(k-1)/4},
\end{equation}
uniformly in $y\in W_{n,\varepsilon},n\ge 1$. Finally, 
for any bounded open set $D\subset W$,
\begin{equation}\label{L6.6}
 \mathbf P(\tau_y>n, y+S_n\in \sqrt n D)\sim K\Delta(y) n^{-k(k-1)/4} 
\int_D dz e^{-|z|^2/2}\Delta(z).
\end{equation}

\end{lemma}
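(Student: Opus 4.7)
The idea is to transfer the Brownian estimates (\ref{L6.1})--(\ref{L6.3}) of Lemma~\ref{lem6} to the random walk via the KMT-type coupling of Lemma~\ref{lem4}, exploiting the fact that for $y\in W_{n,\varepsilon}$ the coordinate gaps of order $n^{1/2-\varepsilon}$ dominate the coupling error $n^{1/2-a}$ as soon as $\varepsilon<a$.

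Fix $a\in(0,\delta/(2(2+\delta)))$ and pick $\varepsilon<a$. Applying Lemma~\ref{lem4} in each of the $k$ independent coordinates produces, on a common probability space, Brownian motions $B_u=(B_u^{(1)},\ldots,B_u^{(k)})$ such that the event
$$\Omega_n:=\Bigl\{\sup_{u\le n,\,1\le j\le k}|S_{[u]}^{(j)}-B_u^{(j)}|\le n^{1/2-a}\Bigr\}$$
satisfies $\mathbf{P}(\Omega_n^c)=o(n^{2a+a\delta-\delta/2})$. Introduce perturbed starts $y_\pm$ by $(y_\pm)^{(j)}=y^{(j)}\pm 2jn^{1/2-a}$, so that each gap $(y_\pm)^{(l)}-(y_\pm)^{(j)}$ shifts by $\pm2(l-j)n^{1/2-a}$. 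A direct triangle-inequality argument then yields the sandwich
$$\{\tau^{bm}_{y_-}>n\}\cap\Omega_n\ \subset\ \{\tau_y>n\}\cap\Omega_n\ \subset\ \{\tau^{bm}_{y_+}>n\}\cap\Omega_n.$$
Since $y^{(l)}-y^{(j)}\ge n^{1/2-\varepsilon}$ and $\varepsilon<a$, each factor of the Vandermonde is perturbed multiplicatively by $1+O(n^{-(a-\varepsilon)})$, so $\Delta(y_\pm)=(1+o(1))\Delta(y)$ uniformly over $y\in W_{n,\varepsilon}$; likewise $|y_\pm|\le(\theta_n+o(1))\sqrt n$, preserving the applicability of (\ref{L6.2}).

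Combining the right inclusion with (\ref{L6.1}) gives $\mathbf{P}(\tau_y>n)\le C\Delta(y_+)n^{-k(k-1)/4}+\mathbf{P}(\Omega_n^c)$, and because $\Delta(y)\ge n^{(1/2-\varepsilon)k(k-1)/2}$ on $W_{n,\varepsilon}$, taking $\varepsilon$ and then $a$ sufficiently small absorbs the coupling remainder into $C\Delta(y)n^{-k(k-1)/4}$, yielding (\ref{L6.5}). The same sandwich, together with (\ref{L6.2}) applied at $y_\pm$ and $\Delta(y_\pm)\sim\Delta(y)$, produces (\ref{L6.4}). For (\ref{L6.6}), fix $\eta>0$ and bracket $D$ by open sets $D_-\subset D\subset D_+$ of Hausdorff distance at most $\eta$ from $D$; on $\Omega_n$ the event $\{y+S_n\in\sqrt n D\}$ is sandwiched between $\{y_\pm+B_n\in\sqrt n D_\pm\}$ (the $O(n^{1/2-a})$ discrepancy is $o(\sqrt n)$), so integrating the density (\ref{L6.3}) over $\sqrt n D_\pm$ and letting first $n\to\infty$ and then $\eta\to 0$ produces (\ref{L6.6}), using continuity of $D\mapsto\int_D e^{-|z|^2/2}\Delta(z)\,dz$.

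The main technical point is a careful balancing of exponents: the coupling failure rate $n^{2a+a\delta-\delta/2}$ has to be strictly dominated by the smallest possible value of the leading term, namely $n^{-\varepsilon k(k-1)/2}\cdot n^{-k(k-1)/4}$, uniformly in $y\in W_{n,\varepsilon}$. This forces the constraint $a(2+\delta)+\varepsilon k(k-1)/2<\delta/2$, which admits a solution precisely when $\delta>0$; this is exactly the reason the paper imposes the strengthening $\alpha>2$ in the moment assumption for $k=3$.
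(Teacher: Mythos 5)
Your proposal matches the paper's argument essentially step-for-step: the same KMT coupling from Lemma~\ref{lem4}, the same sandwich between Brownian exit events started from perturbed points $y^\pm$ shifted by $O(n^{1/2-a})$, the same observation that $\Delta(y^\pm)\sim\Delta(y)$ for $y\in W_{n,\varepsilon}$ when $\varepsilon<a$, and the same exponent-balancing to absorb the coupling-failure probability into the lower bound $\Delta(y)n^{-k(k-1)/4}\gtrsim n^{-\varepsilon k(k-1)/2}$ on $W_{n,\varepsilon}$. The paper simply specialises $a=2\varepsilon$ where you keep $a$ free, and your bracketing of $D$ by $D^\pm$ for (\ref{L6.6}) is also exactly what the paper does.
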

\begin{proof}
For every $y\in W_{n,\varepsilon}$ denote
$$
y^\pm=(y_i\pm(i-1)n^{1/2-2\varepsilon},1\le i\le k).
$$
Define $A=\left\{\sup_{u\leq n}|S^{(r)}_{[u]}-B^{(r)}_{u}|\le n^{1/2-2\varepsilon}\text{ for all }r\leq k\right\}$,
where $B^{(r)}$ are as in Lemma~\ref{lem4}.
Then, using (\ref{L4}) with $a=2\varepsilon$, we obtain
\begin{align}\label{L5.1}
\nonumber
\mathbf{P}(\tau_y>n)&=\mathbf{P}(\tau_y>n,A)+o\left(n^{-r}\right)\\
\nonumber
&=\mathbf{P}(\tau_y>n,\tau^{bm}_{y^+}>n,A)+o\left(n^{-r}\right)\\
\nonumber
&\leq \mathbf{P}(\tau^{bm}_{y^+}>n,A)+o\left(n^{-r}\right)\\
&=\mathbf{P}(\tau^{bm}_{y^+}>n)+o\left(n^{-r}\right),
\end{align}
where $r=r(\delta,\varepsilon)=\delta/2-4\varepsilon-2\varepsilon\delta.$
In the same way one can get
\begin{equation}
\label{L5.2}
\mathbf{P}(\tau^{bm}_{y^-}>n)\leq \mathbf{P}(\tau_{y}>n)+o\left(n^{-r}\right).
\end{equation}
By Lemma~\ref{lem6}, 
$$
\mathbf P(\tau^{bm}_{y^\pm}>n)\sim \varkappa\Delta(y^\pm)n^{-k(k-1)/4}.
$$
Next, since $y\in W_{n\varepsilon}$,
$$
\Delta(y^\pm)=\Delta(y)(1+O(n^{-\varepsilon}))
$$ 
Therefore, we conclude that
$$
\mathbf{P}(\tau^{bm}_{y^\pm}>n)=\varkappa\Delta(y)n^{-k(k-1)/4}(1+O(n^{-\varepsilon})).
$$
{F}rom this relation and bounds (\ref{L5.1}) and (\ref{L5.2}) we obtain
$$
\mathbf{P}(\tau_{y}>n)=\varkappa\Delta(y)n^{-k(k-1)/4}(1+O(n^{-\varepsilon}))+o\left(n^{-r}\right).
$$
Thus, it remains to show that
\begin{equation}
\label{L5.3}
n^{-r}=o(\Delta(y)n^{-k(k-1)/4})
\end{equation}
for all sufficiently small $\varepsilon>0$ and all $y\in W_{n,\varepsilon}$.
For that note that for $y\in W_{n,\varepsilon}$,
$$
\Delta(y)n^{-k(k-1)/4}\ge \prod_{i<j}(j-i)n^{-\varepsilon\frac{k(k-1)}{2}}.
$$
Therefore, (\ref{L5.3}) will be valid for all $\varepsilon$ satisfying
$$
r=4\varepsilon+2\delta\varepsilon-\delta/2<-\varepsilon\frac{k(k-1)}{2}.
$$
This proves (\ref{L6.4}). 
To prove  (\ref{L6.5}) it is sufficient to substitute (\ref{L6.1}) in (\ref{L5.1}).

The proof of  (\ref{L6.6}) is similar. Define two sets, 
\begin{align*}
D^+=\{z\in W: dist(z, D)\le 4k n^{-2\varepsilon}\},\ 
D^-=\{z\in D: dist(z, \partial D)\le 4k n^{-2\varepsilon}\}.
\end{align*}
Clearly $D^-\subset D\subset D^+.$ Then, arguing as above, we get
\begin{align}\label{L6.7}
\nonumber
\mathbf{P}(\tau_y>n,y+S_n\in \sqrt n D)&\le 
\mathbf{P}(\tau_y>n,y+S_n\in \sqrt n D, A)+o\left(n^{-r}\right)\\
\nonumber
&\le 
\mathbf{P}(\tau^{bm}_{y^+}>n,y^++B_n\in \sqrt n D^+, A)+o\left(n^{-r}\right)\\
&\le 
\mathbf{P}(\tau^{bm}_{y^+}>n,y^++B_n\in \sqrt n D^+)+o\left(n^{-r}\right).
\end{align}
Similarly,
\begin{equation}\label{L6.8}
\mathbf{P}(\tau_y>n,y+S_n\in \sqrt n D)\ge 
\mathbf{P}(\tau^{bm}_{y^-}>n,y^-+B_n\in \sqrt n D^-)+o\left(n^{-r}\right).
\end{equation}
Now we apply (\ref{L6.3}) and obtain 
\begin{align*}
 \mathbf{P}(\tau^{bm}_{y^\pm}>n,y^\pm+B_n\in \sqrt n D^\pm)&\sim 
K \Delta(y^\pm) \int_{\sqrt n D^\pm} dz e^{-|z|^2/(2n)}\Delta(z) n^{-\frac{k}{2}}n^{-\frac{k(k-1)}{4}} \\
&=K \Delta(y^\pm) \int_{ D^\pm} dz e^{-|z|^2/2}\Delta(z) n^{-\frac{k(k-1)}{4}}.
\end{align*}
It is sufficient to note now that 
$$
\Delta(y^\pm)\sim\Delta(y) \mbox{ and } \int_{ D^\pm} dz e^{-|z|^2/2}\Delta(z)\to 
\int_{ D} dz e^{-|z|^2/2}\Delta(z)
$$
as $n\to \infty$. From these relations and bounds (\ref{L6.7}) and 
(\ref{L6.8})  we obtain
$$
\mathbf{P}(\tau_y>n,y+S_n\in \sqrt n D)=
(K+o(1)) \Delta(y) \int_{ D} dz e^{-|z|^2/2}\Delta(z) n^{-\frac{k(k-1)}{4}}
+o\left(n^{-r}\right). 
$$
Recalling (\ref{L5.3}) we arrive at the conclusion. 
\end{proof}
\section{Asymptotics for $\mathbf P\{\tau_x>n\}$}\label{sect.asymptotics}
We first note that, in view of Lemma~\ref{lem1},
\begin{align}\label{T1.1}
\nonumber
\mathbf{P}(\tau_x>n)&=\mathbf{P}(\tau_x>n,\nu_n\leq n^{1-\varepsilon})+\mathbf{P}(\tau_x>n,\nu_n> n^{1-\varepsilon})\\
&=\mathbf{P}(\tau_x>n,\nu_n\leq n^{1-\varepsilon})+O\left(e^{-Cn^\varepsilon}\right).
\end{align}
Using the strong Markov property, we get for the first term the following estimates
\begin{align}
\label{T1.2}
\nonumber
&\int_{W_{n,\varepsilon}}\mathbf{P}\left(S_{\nu_n}\in dy,\tau_x>\nu_n,\nu_n\leq n^{1-\varepsilon}\right)\mathbf{P}(\tau_y>n)
\leq\mathbf{P}(\tau_x>n,\nu_n\leq n^{1-\varepsilon})\\
&\hspace{1cm}\leq\int_{W_{n,\varepsilon}}\mathbf{P}\left(S_{\nu_n}\in dy,\tau_x>\nu_n,\nu_n\leq n^{1-\varepsilon}\right)\mathbf{P}(\tau_y>n-n^{1-\varepsilon}).
\end{align}
Applying now Lemmas~\ref{lem5}, we obtain 
\begin{align}\label{T1.3}
\nonumber
&\mathbf{P}(\tau_x>n;\nu_n\leq n^{1-\varepsilon})\\
\nonumber
&=
\frac{\varkappa+o(1)}{n^{k(k-1)/4}}\mathbf{E}\left[\Delta(x+S_{\nu_n});\tau_x>\nu_n,|S_{\nu_n}|\leq \theta_n\sqrt{n},\nu_n\leq n^{1-\varepsilon}\right]\\
\nonumber
&\hspace{0.5cm}+O\left(\frac{1}{n^{k(k-1)/4}}\mathbf{E}\left[\Delta(x+S_{\nu_n});\tau_x>\nu_n,|S_{\nu_n}|> \theta_n\sqrt{n},\nu_n\leq n^{1-\varepsilon}\right]\right)\\
\nonumber
&=
\frac{\varkappa+o(1)}{n^{k(k-1)/4}}\mathbf{E}\left[\Delta(x+S_{\nu_n});\tau_x>\nu_n,\nu_n\leq n^{1-\varepsilon}\right]\\
&\hspace{0.5cm}+O\left(\frac{1}{n^{k(k-1)/4}}\mathbf{E}\left[\Delta(x+S_{\nu_n});\tau_x>\nu_n,|S_{\nu_n}|> \theta_n\sqrt{n},\nu_n\leq n^{1-\varepsilon}\right]\right).
\end{align}
We now show that the first expectation converges to $V(x)$ and that the second expectation is negligibly small.
\begin{lemma}\label{lem11}
Under the assumptions of Theorem~\ref{T},
$$
\lim_{n\to\infty}\mathbf{E}\left[\Delta(x+S_{\nu_n}){\bf 1}\{\tau_x>\nu_n\};\nu_n\leq n^{1-\varepsilon}\right]=V(x).
$$
\end{lemma}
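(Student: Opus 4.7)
The natural approach is optional stopping for the Vandermonde martingale $M_k=\Delta(x+S_k)$ at the bounded stopping time $\nu^\ast:=\nu_n\wedge n^{1-\varepsilon}\wedge\tau_x$. Integrability of $M_k$ at each fixed $k$ follows from our moment assumption: expanding the Vandermonde, each coordinate $\xi^{(j)}$ appears in exactly $k-1$ linear factors, so by independence of the coordinates, $\mathbf E|\xi|^{k-1}<\infty$ (or $\mathbf E|\xi|^\alpha<\infty$, $\alpha>2$, when $k=3$) gives $\mathbf E|M_k|<\infty$. Since $\nu^\ast\le n^{1-\varepsilon}$, optional stopping yields $\Delta(x)=\mathbf E[\Delta(x+S_{\nu^\ast})]$, and partitioning according to which of $\nu_n,n^{1-\varepsilon},\tau_x$ attains the minimum leads to
\begin{align*}
\Delta(x)&=\mathbf E[\Delta(x+S_{\nu_n});\tau_x>\nu_n,\nu_n\le n^{1-\varepsilon}]\\
&\quad{}+\mathbf E[\Delta(x+S_{n^{1-\varepsilon}});\tau_x>n^{1-\varepsilon},\nu_n>n^{1-\varepsilon}]\\
&\quad{}+\mathbf E[\Delta(x+S_{\tau_x});\tau_x\le\nu_n\wedge n^{1-\varepsilon}]=:E_1(n)+E_2(n)+E_3(n).
\end{align*}
The first term is exactly the expectation of interest, so it suffices to identify the limits of $E_2$ and $E_3$.

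For $E_2$ the key is the submartingale $Y_k:=\Delta(x+S_k){\bf 1}\{T_x>k\}$ from Lemma~\ref{lem-1}. Since $\tau_x\le T_x$, on $\{\tau_x>n^{1-\varepsilon}\}$ one has $T_x>n^{1-\varepsilon}$ and $\Delta(x+S_{n^{1-\varepsilon}})>0$, hence $E_2(n)\le \mathbf E[Y_{n^{1-\varepsilon}};\nu_n>n^{1-\varepsilon}]$. The event $\{\nu_n>n^{1-\varepsilon}\}$ lies in $\mathcal F_{n^{1-\varepsilon}}$, so the submartingale property lets us push the time argument forward to $n$,
\[\mathbf E[Y_{n^{1-\varepsilon}};\nu_n>n^{1-\varepsilon}]\le \mathbf E[Y_n;\nu_n>n^{1-\varepsilon}]\le \mathbf E[|\Delta(x+S_n)|;\nu_n>n^{1-\varepsilon}],\]
and Lemma~\ref{lem2} bounds the last expression by $c\Delta_1(x)e^{-Cn^\varepsilon}$. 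I expect this to be the delicate step: under our minimal moment hypothesis $|\Delta(x+S_{n^{1-\varepsilon}})|$ lies in $L^1$ but in no $L^{1+\delta}$ when $k\ge 4$, so the naive route through Lemma~\ref{lem1} and a H\"older inequality is not available, and routing through $Y_k$ is what allows the bound to be derived without extra moments.

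For $E_3$ I would apply dominated convergence. The integrand is dominated by the integrable $|\Delta(x+S_{\tau_x})|$ (integrability is established in the opening paragraph of the proof of Proposition~\ref{prop1} via Lemma~\ref{lem3}). Pointwise, $n^{1-\varepsilon}\to\infty$ deterministically and for each fixed $\omega$ one has $\nu_n(\omega)\to\infty$ (the gaps of $x+S_k$ over any fixed horizon are bounded while the threshold $n^{1/2-\varepsilon}$ tends to infinity), so $\nu_n\wedge n^{1-\varepsilon}\to\infty$; together with $\tau_x<\infty$ almost surely (each difference $S^{(j)}-S^{(i)}$ is a centred one-dimensional random walk with finite variance, hence recurrent), this forces ${\bf 1}\{\tau_x\le\nu_n\wedge n^{1-\varepsilon}\}\to 1$ a.s. Hence $E_3(n)\to\mathbf E\Delta(x+S_{\tau_x})$, and combining with $E_2(n)\to 0$ and the closed form $V(x)=\Delta(x)-\mathbf E\Delta(x+S_{\tau_x})$ from part~(a) of Proposition~\ref{prop1} gives $E_1(n)\to V(x)$, as required.
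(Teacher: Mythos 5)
Your proof is correct and takes essentially the same route as the paper: apply optional stopping to the Vandermonde martingale at a bounded stopping time built from $\nu_n\wedge n^{1-\varepsilon}$, so that the target expectation equals $\Delta(x)$ minus a ``$\nu_n$ is late'' error term (killed by Lemma~\ref{lem2}) minus a term that converges to $\mathbf E\Delta(x+S_{\tau_x})$ by dominated convergence. One small refinement in your write-up: for the $E_2$ term the paper invokes Lemma~\ref{lem2} directly on $\Delta(x+S_{n^{1-\varepsilon}})$, whereas the lemma as stated controls $\Delta(x+S_n)$; your extra step of pushing the time index forward from $n^{1-\varepsilon}$ to $n$ using the $\{T_x>\cdot\}$-killed submartingale $Y_k$ makes that citation exact, and you correctly observe that this route (rather than H\"older with Lemma~\ref{lem1}) is what keeps the argument within the minimal $(k-1)$-moment hypothesis. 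One minor slip: the identity $V(x)=\Delta(x)-\mathbf E\Delta(x+S_{\tau_x})$ is the definition~(\ref{defn.V}), not part~(a) of Proposition~\ref{prop1}.
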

\begin{proof}
Rearranging, we have
\begin{align}\label{T1.4}
\nonumber
&\mathbf{E}\left[\Delta(x+S_{\nu_n}){\bf 1}\{\tau_x>\nu_n\};\nu_n\leq n^{1-\varepsilon}\right]\\
\nonumber
&\hspace{1cm}
=\mathbf{E}\left[\Delta(x+S_{\nu_n\wedge n^{1-\varepsilon}}){\bf 1}\{\tau_x>\nu_n\wedge n^{1-\varepsilon}\};\nu_n\leq n^{1-\varepsilon}\right]\\
&\hspace{1cm}
\nonumber
=\mathbf{E}\left[\Delta(x+S_{\nu_n\wedge n^{1-\varepsilon}}){\bf 1}\{\tau_x>\nu_n\wedge n^{1-\varepsilon}\}\right]\\
&\hspace{2cm}-\mathbf{E}\left[\Delta(x+S_{n^{1-\varepsilon}}){\bf 1}\{\tau_x>n^{1-\varepsilon}\};\nu_n> n^{1-\varepsilon}\right].
\end{align}
According to Lemma~\ref{lem2},
\begin{equation}
\label{T1.5}
\left|\mathbf{E}\left[\Delta(x+S_{n^{1-\varepsilon}}){\bf 1}\{\tau_x>n^{1-\varepsilon}\};\nu_n> n^{1-\varepsilon}\right]\right|
\leq C(x)\exp\{-C n^\varepsilon\}.
\end{equation}
Further,
\begin{align*}
&\mathbf{E}\left[\Delta(x+S_{\nu_n\wedge n^{1-\varepsilon}}){\bf 1}\{\tau_x>\nu_n\wedge n^{1-\varepsilon}\}\right]\\
&\hspace{1cm}
=\mathbf{E}\left[\Delta(x+S_{\nu_n\wedge n^{1-\varepsilon}})\right]
-\mathbf{E}\left[\Delta(x+S_{\nu_n\wedge n^{1-\varepsilon}}){\bf 1}\{\tau_x\leq\nu_n\wedge n^{1-\varepsilon}\}\right]\\
&\hspace{1cm}
=\Delta(x)
-\mathbf{E}\left[\Delta(x+S_{\nu_n\wedge n^{1-\varepsilon}}){\bf 1}\{\tau_x\leq\nu_n\wedge n^{1-\varepsilon}\}\right]\\
&\hspace{1cm}
=\Delta(x)
-\mathbf{E}\left[\Delta(x+S_{\tau_x}){\bf 1}\{\tau_x\leq\nu_n\wedge n^{1-\varepsilon}\}\right],
\end{align*}
here we have used the martingale property of $\Delta(x+S_{n})$.
Noting that $\nu_n\wedge n^{1-\varepsilon}\to\infty$ almost surely, we have
$$
\Delta(x+S_{\tau_x}){\bf 1}\{\tau_x\leq\nu_n\wedge n^{1-\varepsilon}\}\to\Delta(x+S_{\tau_x}).
$$
Then, using the integrability of $\Delta(x+S_{\tau_x})$ and the dominated convergence, we obtain
\begin{equation}
\label{T1.6}
\mathbf{E}\left[\Delta(x+S_{\tau_x}){\bf 1}\{\tau_x\leq\nu_n\wedge n^{1-\varepsilon}\}\right]
\to\mathbf{E}\left[\Delta(x+S_{\tau_x})\right].
\end{equation}
Combining (\ref{T1.4})--(\ref{T1.6}), we finish the proof of the lemma.
\end{proof}
\begin{lemma}
Under the assumptions of Theorem~\ref{T},
$$
\lim_{n\to\infty}\mathbf{E}\left[\Delta(x+S_{\nu_n});\tau_x>\nu_n,|S_{\nu_n}|> \theta_n\sqrt{n},\nu_n\leq n^{1-\varepsilon}\right]
=0.
$$
\end{lemma}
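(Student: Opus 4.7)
The plan is to adapt the truncation strategy used in Lemma~\ref{lem0}. Fix a small $\delta>0$ (to be tuned against $\varepsilon$), set $\theta_n=n^{-\varepsilon/4}$, and introduce the event
$$A_n=\Bigl\{\max_{1\le l\le k,\ 1\le m\le n^{1-\varepsilon}}|\xi^{(l)}_m|\le n^{1/2-\delta}\Bigr\}.$$
Decompose the target expectation as $E=E_1+E_2$, where $E_1$ is the part on $A_n$ and $E_2$ the part on $\bar A_n$; the plan is to show $E_1=o(1)$ and $E_2=o(1)$ separately.

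For $E_1$, the centred truncated increments are bounded by $n^{1/2-\delta}$, so Bernstein's inequality applied coordinatewise gives, for $\delta\in(\varepsilon/4,\varepsilon/2)$,
$$\mathbf P\Bigl(\sup_{m\le n^{1-\varepsilon}}|S^{(l)}_m|>\theta_n\sqrt n/\sqrt k,\ A_n\Bigr)\le C\exp(-c n^{\gamma})$$
for some $\gamma>0$. A union bound over $l$ and the crude deterministic estimate $\Delta(x+S_{\nu_n})\le c(|x|+n^{3/2-\delta})^{k(k-1)/2}$ valid on $A_n\cap\{\nu_n\le n^{1-\varepsilon}\}$ yield $E_1\le c\,n^{O(1)}\exp(-c n^{\gamma})\to 0$.

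For $E_2$ I would mimic the $E_2$-step of Lemma~\ref{lem0}. Bound $\Delta(x+S_{\nu_n})\le\Delta_1(x)\Delta_1(S_{\nu_n})$, expand $\Delta_1(S_{\nu_n})$ as a sum of monomials $\prod_l|S^{(l)}_{\nu_n}|^{i_l}$ with $i_l\le k-1$ and $\sum_l i_l\le k(k-1)/2$, dominate $|S^{(l)}_{\nu_n}|\le M^{(l)}_{n^{1-\varepsilon}}$, and factor the expectation using independence of the one-dimensional walks. Writing $\bar A_n=\bigcup_r D_r$ with $D_r=\{\max_{m\le n^{1-\varepsilon}}|\xi^{(r)}_m|>n^{1/2-\delta}\}$ and considering each $D_r$ in turn, apply the sharp estimate (\ref{L0.4}) of Lemma~\ref{lem0} on the $r$-th coordinate and ordinary moment bounds $\mathbf E(M^{(l)}_{n^{1-\varepsilon}})^{i_l}\le C(n^{1-\varepsilon})^{i_l/2}$ on the remaining ones. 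The extra constraint $|S_{\nu_n}|>\theta_n\sqrt n$ forces some coordinate $l^\ast$ to satisfy $M^{(l^\ast)}_{n^{1-\varepsilon}}>\theta_n\sqrt n/\sqrt k$, and the uniform integrability of $(M^{(l^\ast)}_{n^{1-\varepsilon}}/n^{(1-\varepsilon)/2})^{k-1}$ (immediate from $\mathbf E|\xi|^{k-1}<\infty$) supplies a vanishing factor $\mathbf E[(M^{(l^\ast)}_{n^{1-\varepsilon}})^{i_{l^\ast}};M^{(l^\ast)}_{n^{1-\varepsilon}}>\theta_n\sqrt n/\sqrt k]=o(n^{(1-\varepsilon)i_{l^\ast}/2})$ for that coordinate. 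Collecting these estimates over all monomial indices and all choices of $r$, and picking $\delta$ sufficiently small, drives every exponent of $n$ strictly negative, giving $E_2\to 0$.

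I expect the main obstacle to lie in the bookkeeping for $E_2$: with only $k-1$ moments the naive polynomial growth of $\mathbf E\Delta_1(S_{\nu_n})$ is of order $n^{(1-\varepsilon)k(k-1)/4}$, and beating this growth simultaneously requires both the Lemma~\ref{lem0} decay coming from $D_r$ and the UI vanishing coming from the large-$|S|$ constraint. These two sources of decay must be assigned to the correct coordinates (the big-jump coordinate $r$ and the large-displacement coordinate $l^\ast$, which need not coincide), and the exact balance of exponents depends delicately on $\varepsilon$, $\delta$, $k$ and the multi-index $(i_l)$. The case $k=3$ is the tightest, which is precisely why the moment assumption is strengthened to $\mathbf E|\xi|^{2+\eta}<\infty$ there: the extra $\eta$ buys the room needed to keep every resulting exponent strictly negative.
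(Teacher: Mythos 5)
Your $E_1$ estimate is fine and close in spirit to the paper's (which first reduces to the deterministic time $n^{1-\varepsilon}$ via the submartingale property of $\Delta(x+S_j)\mathbf 1\{T_x>j\}$ from Lemma~\ref{lem-1}, and then applies a Fuk--Nagaev bound on the truncated event). The gap is in $E_2$, and it is not a bookkeeping issue that more care would fix: the transplant of the Lemma~\ref{lem0} computation fails for a structural reason. In Lemma~\ref{lem0} the starting point lies in $W_{n,\varepsilon}$, so every factor $n^{1/2}$ produced by the moment bounds is absorbed via $n^{1/2}\le n^{\varepsilon}|x^{(j)}-x^{(i)}|$, the polynomial growth is hidden inside $\Delta(x)n^{\varepsilon k(k-1)/2}$, and the single gain $n^{1-\alpha/2}$ from the big-jump coordinate easily beats $n^{\varepsilon k(k-1)/2}$. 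Here $x$ is fixed, $\Delta_1(x)$ is a constant, and once you pass to $\Delta_1(x)\Delta_1(S_{\nu_n})$ and take absolute values of monomials you are stuck with the full growth $m^{k(k-1)/4}$, $m=n^{1-\varepsilon}$. Against it you can harvest at most two polynomial gains of order $m^{1-\alpha/2+O(\delta)}$: one from $D_r$ via (\ref{L0.4}) and one from the moderate-deviation event $M^{(l^*)}_m>\theta_n\sqrt n/\sqrt k$ (and only one when $r=l^*$, since a single jump above the displacement threshold triggers both events; moreover your ``uniform integrability'' factor is only $o(m^{i_{l^*}/2})$, which is weaker still). The resulting exponent is at least $k(k-1)/4+2-\alpha$, which equals $2$ for $k=4$, grows with $k$, and is $3/2-\eta/2>0$ for $k=3$; no choice of $\delta,\varepsilon$ makes it negative. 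Your closing assertion that picking $\delta$ small ``drives every exponent strictly negative'' is therefore false.

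The missing idea is that on $\overline{A_n}$ one must retain the cancellations inside $\Delta$ rather than bound it by $\Delta_1$. The paper dominates $\mathbf 1_{\overline{A_n}}$ by the count $\Sigma_n$ of large jumps and evaluates $\mathbf E[\Delta(x+S_n)\Sigma_n]$ \emph{exactly}, using the exchangeability identity $\mathbf E[\Delta(x+S_l)\mid\Sigma_l]=\Delta(x)$ (Theorem~2.1 of \cite{KOR02}); this yields $k\Delta(x)\,n\,\mathbf P(|\xi|>n^{1/2+\delta})=o(1)$ with no polynomial growth at all. The complementary piece on $\{T_x\le n\}$ is then handled by conditioning at $T_x$ and dominated convergence, using the integrability of $\Delta(x+S_{T_x})$ established in Section~\ref{sect.integrability}. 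Some such signed/martingale argument appears unavoidable under the minimal moment assumption; a brute-force absolute-value expansion cannot close this case.
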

\begin{proof}
We first note that
\begin{align*}
&\mathbf{E}\left[\Delta(x+S_{\nu_n});\tau_x>\nu_n,|S_{\nu_n}|> \theta_n\sqrt{n},\nu_n\leq n^{1-\varepsilon}\right]\\
&\leq \mathbf{E}\left[\Delta(x+S_{\nu_n});T_x>\nu_n,|S_{\nu_n}|> \theta_n\sqrt{n},\nu_n\leq n^{1-\varepsilon}\right]\\
&\leq \mathbf{E}\left[\Delta(x+S_{n^{1-\varepsilon}});T_x>n^{1-\varepsilon},M_{n^{1-\varepsilon}}> \theta_n\sqrt{n}\right],
\end{align*}
where we used the submartingale property of $\Delta(x+S_{j}){\bf 1}\{T_x>j\}$, see Lemma~\ref{lem-1}.
(Recall that $M_j=\max_{i\leq j,r\leq k}|S_i^{(r)}|$.) Therefore, it is sufficient to show that
\begin{equation}\label{T1.7}
\mathbf{E}\left[\Delta(x+S_{n});T_x>n,M_{n}>n^{1/2+2\delta}\right]\to0
\end{equation}
for any positive $\delta$.

Define 
$$
A_n=\left\{\max_{1\le i\le n, 1\le j\le k}|\xi_i^{(j)}|\le n^{1/2+\delta}\right\}.
$$
Then
$$
\mathbf{E}\left[\Delta(x+S_{n});T_x>n,M_{n}>n^{1/2+2\delta},A_n\right]\leq
\mathbf{E}\left[|\Delta(x+S_{n})|;M_{n}>n^{1/2+2\delta},A_n\right].
$$
Since $|S_n^{(j)}|\leq n\max_{i\leq n}|\xi_i^{(j)}|\leq n^{3/2+\delta}$ on the event $A_n$, we arrive at the
following upper bound
\begin{align*}
&\mathbf{E}\left[\Delta(x+S_{n});T_x>n,M_{n}>n^{1/2+2\delta},A_n\right]\\
&\leq
C(x)\left(n^{3/2+\delta}\right)^{k(k-1)/2}\mathbf{P}(M_{n}>n^{1/2+2\delta},A_n).
\end{align*}
Applying now one of the Fuk-Nagaev inequalities, see Corollary 1.11 in \cite{Nag79}, we have
$$
\mathbf{P}(M_{n}>n^{1/2+2\delta},A_n)\leq\exp\{-Cn^\delta\}.
$$
As a result,
\begin{equation}\label{T1.8}
\lim_{n\to\infty}\mathbf{E}\left[\Delta(x+S_{n});T_x>n,M_{n}>n^{1/2+2\delta},A_n\right]=0
\end{equation}

Define
$$
\Sigma_l:=\sum_{i=1}^l\sum_{j=1}^k{\bf 1}\{|\xi_i^{(j)}|>n^{1/2+\delta}\},\quad l\leq n
$$
and 
$$
\Sigma_{l,n}:=\sum_{i=l+1}^n\sum_{j=1}^k{\bf 1}\{|\xi_i^{(j)}|>n^{1/2+\delta}\},\quad l<n.
$$
We note that
\begin{align}\label{T1.9}
\nonumber
\mathbf{E}\left[\Delta(x+S_{n});T_x>n,M_{n}>n^{1/2+2\delta},\overline{A_n}\right]\leq
\mathbf{E}\left[\Delta(x+S_{n})\Sigma_n;T_x>n\right]\\
=\mathbf{E}\left[\Delta(x+S_{n})\Sigma_n\right]-\mathbf{E}\left[\Delta(x+S_{n})\Sigma_n;T_x\leq n\right].
\end{align}
Since the conditioned distribution of $S_n$ given $\Sigma$ is exchangeable, we may apply Theorem 2.1
of \cite{KOR02}, which says that 
$$
\mathbf{E}[\Delta(x+S_l)|\Sigma_l]=\Delta(x),\quad l\leq n.
$$
Therefore,
\begin{equation}\label{T1.10}
\mathbf{E}[\Delta(x+S_l)]=\Delta(x)\mathbf{E}[\Sigma_l]
=k\Delta(x)l\mathbf{P}(|\xi|>n^{1/2+\delta}),\quad l\leq n.
\end{equation}
Using this equality and conditioning on $\mathcal{F}_l$, we have
\begin{align*}
&\mathbf{E}\left[\Delta(x+S_{n})\Sigma_n;T_x=l\right]=\mathbf{E}\left[\Delta(x+S_{n})\Sigma_l;T_x=l\right]+
\mathbf{E}\left[\Delta(x+S_{n})\Sigma_{l,n};T_x=l\right]\\
&=\mathbf{E}\left[\Delta(x+S_{l})\Sigma_l;T_x=l\right]+\mathbf{E}\left[\mathbf{E}[\Delta(x+S_{n})\Sigma_{l,n}|\mathcal{F}_l];T_x=l\right]\\
&=\mathbf{E}\left[\Delta(x+S_{l})\Sigma_l;T_x=l\right]+\mathbf{E}\left[\Delta(x+S_{l});T_x=l\right]\mathbf{E}\Sigma_{l,n},
\end{align*}
Consequently,
\begin{align*}
\mathbf{E}\left[\Delta(x+S_{n})\Sigma;T_x\leq n\right]&=\mathbf{E}\left[\Delta(x+S_{T})\Sigma_T;T_x\leq n\right]\\
&\hspace{1cm}+O\left(n\mathbf{P}(|\xi|>n^{1/2+\delta})\mathbf{E}\left[\Delta(x+S_{T});T_x\leq n\right]\right)\\
&=\mathbf{E}\left[\Delta(x+S_{T})\Sigma_T;T_x\leq n\right]+o(1).
\end{align*}
Finally,
$$
|\mathbf{E}\left[\Delta(x+S_{T})\Sigma_T;T_x\leq n\right]|\leq
\mathbf{E}\left[|\Delta(x+S_{T})|\Sigma_n\right]=o(1),
$$
by the dominated convergence, since $\Sigma_n\to0$. This implies that
\begin{equation}
\label{T1.11}
\mathbf{E}\left[\Delta(x+S_{n})\Sigma;T_x\leq n\right]=o(1).
\end{equation}
Combining (\ref{T1.9})--(\ref{T1.11}), we see that the left hand side of (\ref{T1.9}) converges to zero.
Then, taking into account (\ref{T1.8}), we get (\ref{T1.7}). Thus, the proof is finished.
\end{proof}
Now we are in position to complete the proof of Theorem~\ref{T}.
It follows from the lemmas and (\ref{T1.1}) and (\ref{T1.3}) that
$$
\mathbf{P}(\tau_x>n)=\frac{\varkappa V(x)}{n^{k(k-1)/4}}(1+o(1)).
$$
\section{Weak convergence results}\label{sect.weak.convergence}
\begin{lemma}
For any $x\in W$, the distribution 
$\mathbf P\left(\frac{x+S_n}{\sqrt n} \in \cdot | \tau_x>n\right)$ weakly converges to the 
distribution with the density $\frac{1}{Z_1}e^{-|y|^2/2}\Delta(y)$, where $Z_1$ is the norming constant.
\end{lemma}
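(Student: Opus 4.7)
The plan is to compute, for each bounded open set $D \subset W$ whose boundary has zero Lebesgue measure, the ratio
\[
\mathbf{P}\Bigl(\frac{x+S_n}{\sqrt{n}} \in D \,\Big|\, \tau_x > n\Bigr) = \frac{\mathbf{P}(\tau_x > n,\, x+S_n \in \sqrt{n} D)}{\mathbf{P}(\tau_x > n)},
\]
identify its limit, and then invoke Portmanteau to upgrade to weak convergence. The denominator is supplied directly by Theorem~\ref{T}, so all the work is in the numerator.

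The key observation is that the numerator yields to exactly the decomposition at the stopping time $\nu_n$ that drives Section~\ref{sect.asymptotics}, with the extra constraint $\{x+S_n \in \sqrt{n} D\}$ riding along harmlessly. I would first discard $\{\nu_n > n^{1-\varepsilon}\}$ at exponentially small cost via Lemma~\ref{lem1}, and then apply the strong Markov property at $\nu_n$ to write the numerator as
\begin{align*}
\int_{W_{n,\varepsilon}} \mathbf{P}\bigl(x + S_{\nu_n} & \in dy,\ \tau_x > \nu_n,\ \nu_n \leq n^{1-\varepsilon}\bigr) \\
& \times \mathbf{P}\bigl(\tau_y > n - \nu_n,\ y + S_{n-\nu_n} \in \sqrt{n} D\bigr)
\end{align*}
plus an exponentially small error. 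On $\{|S_{\nu_n}| \leq \theta_n \sqrt{n}\}$ with $\theta_n \to 0$ slowly, Lemma~\ref{lem5} equation (\ref{L6.6}) applies uniformly in $y$ (with $n-\nu_n$ absorbed into $n$ since $\nu_n \leq n^{1-\varepsilon}$) and evaluates the inner probability as $(K+o(1)) \Delta(y) n^{-k(k-1)/4} \int_D e^{-|z|^2/2} \Delta(z)\,dz$. The contribution from $\{|S_{\nu_n}| > \theta_n \sqrt{n}\}$ vanishes by the lemma immediately following Lemma~\ref{lem11}, and Lemma~\ref{lem11} itself identifies the remaining $y$-integral as $V(x) + o(1)$, giving the numerator asymptotic $(K+o(1)) V(x) n^{-k(k-1)/4} \int_D e^{-|z|^2/2} \Delta(z)\,dz$.

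Dividing by Theorem~\ref{T} yields the limit $\frac{K}{\varkappa} \int_D e^{-|z|^2/2} \Delta(z)\,dz$. To identify $K/\varkappa$ with $1/Z_1$, observe that $|\Delta|$ is invariant under permutations of coordinates, whence $\int_{\mathbf{R}^k} e^{-|y|^2/2} |\Delta(y)|\,dy = k! Z_1$, and the explicit formula for $\varkappa$ in Lemma~\ref{lem6} then gives $\varkappa = K Z_1$. Since the candidate limit is already a probability measure on $W$ (its Gaussian decay supplies tightness of the prelimits), set-wise convergence on continuity sets suffices to conclude weak convergence by Portmanteau.

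The main obstacle is conceptual rather than technical: all the heavy lifting has been done in Lemma~\ref{lem5}(\ref{L6.6}) (uniform Brownian asymptotics inside the Weyl chamber) and in the Markov-decomposition machinery of Section~\ref{sect.asymptotics}. The one point that needs genuine care is that the uniformity over $y$ in (\ref{L6.6}) meshes with the random entry point $y = x + S_{\nu_n}$ on $\{\nu_n \leq n^{1-\varepsilon}\}$, which is precisely what the uniform formulation of that lemma was designed for, so no substantially new estimate is required.
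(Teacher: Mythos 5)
Your proposal is correct and follows essentially the same route as the paper: discard $\{\nu_n>n^{1-\varepsilon}\}$ via Lemma~\ref{lem1}, apply the strong Markov property at $\nu_n$, evaluate the inner probability uniformly by the coupling asymptotics of Lemma~\ref{lem5} (equation (\ref{L6.6})), identify the remaining expectation as $V(x)+o(1)$ via Lemma~\ref{lem11} together with the negligibility of $\{|S_{\nu_n}|>\theta_n\sqrt n\}$, and divide by Theorem~\ref{T}. The only additions are the explicit verification that $\varkappa=KZ_1$ and the Portmanteau step, both of which the paper leaves implicit and which you handle correctly.
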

\begin{proof}
We need to show that 
\begin{eqnarray}\label{eq.mmm}
\frac{\mathbf P(x+S_n\in \sqrt n A,\tau_x>n)}{\mathbf P(\tau_x>n)}\to 
Z_1^{-1}\int_A e^{-|y|^2/2}\Delta(y)dy.
\end{eqnarray}
First note that, as in (\ref{T1.1}) and 
(\ref{T1.3}),
\begin{align*}
&\mathbf P(x+S_n\in \sqrt n A,\tau_x>n)
=\mathbf{P}(\tau_x>n,x+S_n\in \sqrt n A,\nu_n\leq n^{1-\varepsilon})+O\left(e^{-Cn^\varepsilon}\right)\\
&\hspace{1cm}=\mathbf{P}(\tau_x>n,x+S_n\in \sqrt n A, |S_{\nu_n}|\le \theta_n\sqrt n,\nu_n\leq n^{1-\varepsilon})+o(\mathbf P(\tau_x>n)).
\end{align*}
Next,
\begin{align*}
 &
\mathbf{P}(\tau_x>n,x+S_n\in \sqrt n A, |S_{\nu_n}|\le \theta_n\sqrt n,\nu_n\leq n^{1-\varepsilon})\\
&=\sum_{j=1}^{n^{1-\varepsilon}}\int_{W_{n,\varepsilon}\cap \{|y|\le \theta_n \sqrt n\}}\mathbf{P}(\tau_x>k,x+S_k\in y\in \sqrt n A,\nu_n=k)\\
 &\hspace{4cm} \times\mathbf{P}(\tau_y>n-k,y+S_{n-k}\in \sqrt n A).
\end{align*}
Using the coupling and arguing  as in Lemma~\ref{lem5}, 
one can show that 
$$
\mathbf{P}(\tau_y>n-k,y+S_{n-k}\in \sqrt n A)\sim
\mathbf{P}(\tau^{bm}_y>n,y+B_{n}\in \sqrt n A)
$$
uniformly in $k\le n^{1-\varepsilon}$ and $y\in W_{n,\varepsilon}$.
Next we apply asymptotics (\ref{L6.3}) and obtain that 
$$
\mathbf{P}(\tau_y>n-k,y+S_{n-k}\in \sqrt n A)
\sim K\int_A dz e^{-|z|^2/2}\Delta(y)\Delta(z) n^{-k(k-1)/4}
$$
uniformly in $y\in W_{n,\varepsilon},\ |y|\le \theta_n\sqrt n$.
As a result we obtain
\begin{align*}
& \mathbf P(x+S_n\in \sqrt n A,\tau_x>n)\sim
\int_A dz e^{-|z|^2/2}\Delta(z) n^{-k(k-1)/4}\\
&\hspace{1cm}\times K\mathbf{E}[\Delta(S_{\nu_n})\tau_x>n,x+S_n\in \sqrt n A, |S_{\nu_n}|\le \theta_n\sqrt n,\nu_n\leq n^{1-\varepsilon}]\\
&\hspace{1cm}\sim K\int_A dz e^{-|z|^2/2}\Delta(z) n^{-k(k-1)/4} V(x) ,
 \end{align*}
where the latter equivalence holds due to Lemma~\ref{lem11}.
Substituting the latter equivalence in (\ref{eq.mmm}) and using the asymptotics for $\mathbf P(\tau_x>n)$, we arrive at the conclusion.
\end{proof}

Now we change slightly notation. Let 
$$
\mathbf P_x(S_n\in A)=\mathbf P(x+S_n\in A).
$$
 
\begin{lemma}\label{lem.weak.convergence}
Let $X^n(t)=\frac{S_{[nt]}}{\sqrt n}$ be the family of processes  
with the probability measure $\mathbf{\widehat P}^{(V)}_{x\sqrt n},x \in W$.
Then $X^n$ weakly converges in $C(0,\infty)$ to the Dyson Brownian motion with starting point $x$, 
i.e. to the process distributed according to the probability measure $\mathbf {\widehat P}_x^{(\Delta)}$.
\end{lemma}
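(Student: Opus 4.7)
The plan is to establish weak convergence of $X^n$ on $C([0,T])$ for every $T>0$ by proving (i) convergence of finite-dimensional distributions and (ii) tightness. For (i), fix times $0<t_1<\cdots<t_m=T$ and open relatively compact sets $A_1,\ldots,A_m\subset W$. Unwinding the Doob $h$-transform via the Markov property,
\begin{equation*}
\mathbf{\widehat P}^{(V)}_{x\sqrt n}\!\left(X^n(t_i)\in A_i,\,i\leq m\right)=\frac{1}{V(x\sqrt n)}\,\mathbf E\!\left[V(x\sqrt n+S_{[nt_m]});\,\tau_{x\sqrt n}>[nt_m],\,X^n(t_i)\in A_i\,\forall i\right].
\end{equation*}
Since $x\in W$ is fixed, Proposition~\ref{prop1}(d) and the homogeneity $\Delta(y\sqrt n)=n^{k(k-1)/4}\Delta(y)$ give $V(x\sqrt n)\sim n^{k(k-1)/4}\Delta(x)$. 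On the event $\{X^n(t_m)\in A_m\}$, with $A_m$ relatively compact in $W$, the same asymptotic yields $V(x\sqrt n+S_{[nt_m]})=(1+o(1))n^{k(k-1)/4}\Delta(X^n(t_m))$ uniformly.

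Next, couple $S$ componentwise with a Brownian motion $B$ via Lemma~\ref{lem4}, so that $X^n$ and $\tilde B^n_t:=x+B_{nt}/\sqrt n$ differ in sup norm on $[0,t_m]$ by $o(1)$ on an event of probability $1-o(1)$. On this coupling event, and for $A_i$ relatively compact in $W$, the events $\{\tau_{x\sqrt n}>[nt_m]\}$ and $\{\tilde B^n_s\in W\text{ for all }s\leq t_m\}$ coincide up to a set whose probability vanishes (because the image of Brownian motion on a compact interval has zero probability of being within $o(1)$ of $\partial W$ without having already exited). Combining these ingredients,
\begin{align*}
\mathbf{\widehat P}^{(V)}_{x\sqrt n}(X^n(t_i)\in A_i\,\forall i)&\longrightarrow\frac{1}{\Delta(x)}\,\mathbf E\!\left[\Delta(x+B_{t_m});\,\tau^{bm}_x>t_m,\,x+B_{t_i}\in A_i\,\forall i\right]\\
&=\mathbf{\widehat P}^{(\Delta)}_x(B_{t_i}\in A_i,\,i\leq m),
\end{align*}
which is precisely the finite-dimensional distribution of Dyson's Brownian motion started at $x$.

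For tightness on $C([0,T])$ I would exploit the same coupling: on the coupling event, $\omega(X^n,\delta)\leq\omega(\tilde B^n,\delta)+o(1)$, and $\tilde B^n$ is distributionally a Brownian motion whose modulus of continuity is well controlled. The extra Radon--Nikodym factor $V(x\sqrt n+S_{[nT]})/V(x\sqrt n)$ is uniformly integrable by the submartingale property from Lemma~\ref{lem-1} together with the bound $V\leq C\Delta_1$ of Proposition~\ref{prop1}(c), so Markov's inequality transfers the modulus-of-continuity estimate from $\mathbf P$ to $\mathbf{\widehat P}^{(V)}_{x\sqrt n}$. The main obstacle is the uniform passage near $\partial W$: we need the asymptotic $V\sim\Delta$ to be usable on the rescaled endpoint, and the crossing of $\partial W$ must transfer faithfully from the walk to the coupled Brownian motion. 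Proposition~\ref{prop1}(d) and Lemma~\ref{lem4} supply exactly what is needed, after which the remainder of the argument is a standard Donsker-type invariance principle combined with the known representation of Dyson's Brownian motion as the $\Delta$-transform of a $k$-dimensional Brownian motion.
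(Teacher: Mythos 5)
Your route (finite-dimensional distributions plus tightness) is structurally different from the paper's, which tests directly against bounded uniformly continuous functionals $f$ on $C[0,1]$: there one writes $\mathbf{\widehat E}^{(V)}_{x\sqrt n}f(X^n)$ as a weighted expectation, replaces $f(X^n)$ by $f(B^n)$ on the coupling event, and uses the monotonicity of $V$ together with Proposition~\ref{prop1}(d) at the shifted starting points $x^{\pm}_n$ to replace the weight $V$ by $\Delta$, so that no separate tightness argument is ever needed. Your finite-dimensional part is essentially sound: on $\{X^n(t_m)\in A_m\}$ with $A_m$ relatively compact in $W$ all gaps of $x\sqrt n+S_{[nt_m]}$ are of order $\sqrt n$, so Proposition~\ref{prop1}(d) and homogeneity of $\Delta$ do give the asymptotics of the weight, the contribution of the complement of the coupling event is $O(n^{k(k-1)/4}\mathbf P(\overline{A_n}))/V(x\sqrt n)=o(1)$ there, and since both the prelimit and the limit are probability measures the convergence on compact subsets of $W$ upgrades to full weak convergence of the marginals.

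The genuine gap is in the tightness step. You claim that the Radon--Nikodym weights $W_n=V(x\sqrt n+S_{[nT]})\mathbf 1\{\tau_{x\sqrt n}>[nT]\}/V(x\sqrt n)$ are uniformly integrable ``by the submartingale property from Lemma~\ref{lem-1} together with the bound $V\leq C\Delta_1$.'' Neither fact yields uniform integrability: the submartingale property of $\Delta(x+S_n)\mathbf 1\{T_x>n\}$ only controls first moments (indeed $\mathbf E W_n=1$, which is far from UI), and the bound $V\le C\Delta_1$ reduces the problem to showing $\sup_n n^{-k(k-1)/4}\,\mathbf E\bigl[\Delta_1(x\sqrt n+S_{[nT]});\,|S_{[nT]}|>R\sqrt n\bigr]\to 0$ as $R\to\infty$, which is exactly the hard estimate and is not supplied. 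The standard shortcut --- bounding $\mathbf E W_n^{p}$ for some $p>1$ via H\"older --- is unavailable here because it requires $p(k-1)$ moments of $\xi$, while the whole point of the paper is to work with exactly $k-1$ moments (this is the content of the Remark after Lemma~\ref{lem2} and of the paper's criticism of the H\"older-based arguments in \cite{EK08}). To close the gap you would have to prove the displayed tail estimate by the truncation and Fuk--Nagaev analysis of the second part of Lemma~\ref{lem0}, i.e.\ essentially reproduce the paper's treatment of the term $E_2$ in Lemma~\ref{lem.weak.convergence}; once that estimate is in hand, your UI/Markov transfer of the modulus-of-continuity bound from $\mathbf P$ to $\mathbf{\widehat P}^{(V)}_{x\sqrt n}$ does work.
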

\begin{proof}
The proof is given via coupling from Lemma~\ref{lem4}.
To prove the claim we need to show that the convergence take place in 
$C[0,l]$ for every l. 
The proof is identical for $l$, so we let $l=1$ to simplify notation.
Thus it sufficient to show that  for every function $f:0\le f\le 1$ uniformly 
continuous on $C[0,1]$,
$$
\mathbf {\widehat E}_{x\sqrt n}^{(V)} f(X^n)\to 
\mathbf {\widehat E}_{x}^{(\Delta)} f(B)\quad\text{as } n\to\infty.
$$
By Lemma~\ref{lem4} one can define $B_n$ and $S_n$ on the same probability 
in such a way that the complement of the event
$$
A_n=\{\sup_{u\le n}|S_{[u]}-B_{u}|\le n^{1/2-a}\} 
$$
is negligible: 
$$
\mathbf P(\overline A_n)=o(n^{-\gamma})
$$
for some $a>0$ and $\gamma>0$.
Let $B^n_t=B_{nt}/\sqrt n$. By the scaling property of the Brownian motion 
$\mathbf {\widehat E}_{x}^{(\Delta)} f(B)
=\mathbf {\widehat E}_{x\sqrt n}^{(\Delta)} f(B^n)$.

Split the expectation into two parts,
\begin{align*}
\mathbf {\widehat E}_{x\sqrt n}^{(V)} f(X^n)
=\mathbf {\widehat E}_{x\sqrt n}^{(V)} [f(X^n);A_n]
+
\mathbf {\widehat E}_{x\sqrt n}^{(V)} [f(X^n);\overline A_n]\equiv E_1+E_2.
\end{align*}
Since the function $f$ is uniformly continuous, 
$$
|f(X^n)-f(B^n)|\le C\sup_{0\le u\le 1} |X^n_u-B^n_u|\le Cn^{-a}
$$
on the event $A_n$. Then,
\begin{align*}
 &\frac{1}{V(x\sqrt n)}\mathbf { E}_{x\sqrt n} [(f(X^n)-f(B^n))V(S_n);\tau>n,A_n]\\
&\le Cn^{-a} \frac{\mathbf { E}_{x\sqrt n} [V(S_n);\tau>n,A_n]}{V(x\sqrt n)}
\le Cn^{-a} \frac{\mathbf { E}_{x\sqrt n} [V(S_n);\tau>n]}{V(x\sqrt n)}
=Cn^{-a}
\end{align*}
tends to $0$ as $n\to \infty$. Therefore,
\begin{align*}
 E_1&=\frac{1}{V(x\sqrt n)}\mathbf { E}_{x\sqrt n} [f(X^n)V(S_n);\tau>n,A_n]\\
&=o(1)+\frac{1}{V(x\sqrt n)}\mathbf { E}_{x\sqrt n} [f(B^n)V(S_n);\tau>n,A_n].
\end{align*}
Moreover, on the event $A_n$ hold the following inequalities 
$$
B^{(j)}_i-B^{(j-1)}_i-2n^{1/2-a}\le S^{(j)}_i-S^{(j-1)}_i\le B^{(j)}_i-B^{(j-1)}_i+2n^{1/2-a}
$$
for $1\le i\le n$ and $2\le j\le k$. Let 
$x^{\pm}_n=(x\sqrt n\pm 2(j-1)n^{1/2-a})$.
Arguing as in Lemma~\ref{lem5} and using monotonicity of $V$, we obtain
\begin{align*}
& \frac{1}{V(x\sqrt n)}\mathbf { E}_{x\sqrt n} [f(B^n)V(S_n);\tau>n,A_n]\\
&\hspace*{1cm}\le 
\frac{(1+o(1))}{V(x \sqrt n)}\mathbf { E}_{x^+_n} [f(B^n)V(B_n);\tau^{bm}>n,A_n]
\\
&\hspace*{1cm}\le 
\frac{(1+o(1))}{V(x \sqrt n)}\mathbf { E}_{x^+_n} [f(B^n)\Delta(B_n);\tau^{bm}>n,A_n]\\
&\hspace*{1cm}=(1+o(1))
\frac{V(x^+_n)}{V(x \sqrt n)}\mathbf {\widehat E}_{x^+_n}^{\Delta} [f(B^n);A_n]
=(1+o(1))
\mathbf {\widehat E}_{x^+_n}^{\Delta} [f(B^n);A_n],
\end{align*}
where we used (d) of Proposition~\ref{prop1} in the second and the third lines. 
Replacing $x^+$ with $x^-$, one can easily obtain the following lower bound
$$
\frac{1}{V(x\sqrt n)}\mathbf { E}_{x\sqrt n} [f(B^n)V(S_n);\tau>n,A_n]
\ge (1+o(1))\mathbf {\widehat E}_{x^-_n}^{\Delta} [f(B^n);A_n].
$$
Note also that 
\begin{align*}
\mathbf {\widehat E}_{x^\pm_n}^{\Delta} [f(B^n);A_n]
&=
\mathbf {\widehat E}_{x^\pm_n}^{\Delta} [f(B^n)]
-\mathbf {\widehat E}_{x^\pm_n}^{\Delta} [f(B^n);\overline A_n]
\\
&=
(1+o(1))
\mathbf {\widehat E}_{x\sqrt n}^{\Delta} [f(B^n)]
-\mathbf {\widehat E}_{x^\pm_n}^{\Delta} [f(B^n);\overline A_n]
\end{align*}
Therefore,
$$
|E_1-
\mathbf {\widehat E}_{x\sqrt n}^{\Delta} [f(B^n)]|\le 
o(1)+\mathbf {\widehat E}_{x^+_n}^{\Delta} [f(B^n);\overline A_n]+
\mathbf {\widehat E}_{x^-_n}^{\Delta} [f(B^n);\overline A_n].
$$
Thus, if we show that 
$$
\mathbf {\widehat E}_{x^\pm_n}^{\Delta} [f(B^n);\overline A_n]=o(1),\quad\text{and}\quad 
E_2=o(1),
$$
we are done. Since the proofs of these statements are almost identical we concentrate 
on showing that $E_2=o(1).$
We have, since $f\le 1$, 
\begin{align*}
 E_2\le 
\frac{1}{V(x\sqrt n)}\mathbf { E}_{x\sqrt n} [V(S_n);|S_n|\le n^{1/2+\delta},\overline{A_n}]\\
+\frac{1}{V(x\sqrt n)}\mathbf { E}_{x\sqrt n} [V(S_n);|S_n|> n^{1/2+\delta}].
\end{align*}
Put $y_n=(2n^{1/2+\delta},\ldots, 2(k-1)n^{1/2+\delta})$.
Then,
\begin{align}\label{eq153}
\nonumber 
 &\frac{1}{V(x\sqrt n)}\mathbf { E}_{x\sqrt n} [V(S_n);|S_n|\le n^{1/2+\delta},\overline{A_n}]
\le 
\frac{V(x_n+y_n)}{V(x\sqrt n)}\mathbf { P}_{x\sqrt n} (|S_n|\le n^{1/2+\delta},\overline{A_n})
\\
&\hspace{2cm}\le 
C \frac{\Delta_1(x_n+y_n)}{\Delta(x\sqrt n)}\mathbf { P}_{x\sqrt n} 
(\overline{A_n})\le C n^{\delta k(k-1)/2}n^{-\gamma}\to 0,
\end{align}
if we pick $\delta $ sufficiently small.
Next, using the bounds $V(x)\leq V^{(T)}(x)\leq\Delta_1(x)$, we get
\begin{align*}
 \mathbf { E}_{x\sqrt n} [V(S_n);|S_n|> n^{1/2+\delta}]
\le \sum_{j=1}^k \mathbf { E} [\Delta_1(x\sqrt n+S_n);|S_n^{(j)}|> n^{1/2+\delta}/k].
\end{align*}
Arguing similarly to the second part of Lemma~\ref{lem0}, one can see that 
 \begin{align*}
 &\mathbf { E} [\Delta_1(x\sqrt n+S_n);|S_n^{(j)}|> n^{1/2+\delta}/k]\\
&\hspace{2cm}\le C(x)\sum_{\mathcal J\subset\mathcal P} n^{|\mathcal J|/2}
\prod_{\mathcal P \backslash \mathcal J} \mathbf 
\mathbf { E} [|S_n^{(j_2)}-S_n^{(j_1)}|;|S_n^{(j)}|> n^{1/2+\delta}/k].
\end{align*}
The expectation of the product can be estimated exactly as in Lemma~\ref{lem0} using 
the Fuk-Nagaev inequality. 
This gives us 
$$
 \frac{1}{V(x\sqrt n)}\mathbf { E}_{x\sqrt n} [V(S_n);|S_n|> n^{1/2+\delta}]
=\frac{o(n^\frac{k(k-1)}{4})}{\Delta(x\sqrt n)}=o(1).
$$
Thus, the proof is finished.
\end{proof}

Now we consider start from a fixed point $x$. 
\begin{lemma}\label{lem.weak.convergence2}
Let $X^n(t)=\frac{S_{[nt]}}{\sqrt n}$ be the family of processes  
with the probability measure $\mathbf{\widehat P}^{(V)}_{x},x \in W$.
Then $X^n$ converges weakly to the Dyson Brownian motion with starting point $0$. 
\end{lemma}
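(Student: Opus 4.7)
The plan is to condition at a small positive time and invoke Lemma~\ref{lem.weak.convergence} for the future evolution. It suffices to prove weak convergence in $C[0,l]$ for every $l$; I take $l=1$. Fix a bounded uniformly continuous functional $f$ on $C[0,1]$ and a small $s_0>0$. By the Markov property of the $V$-transformed chain at time $[ns_0]$,
\[
\widehat{\mathbf E}^{(V)}_x[f(X^n)]=\int_W \widehat{\mathbf P}^{(V)}_x(X^n(s_0)\in dw)\,\widehat{\mathbf E}^{(V)}_{\sqrt n\,w}[F^n_{s_0}(w,X^n)],
\]
where $F^n_{s_0}$ splices the trajectory on $[0,s_0]$ in front of the shifted path. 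I will analyse this in three steps: identify the limit of the $X^n(s_0)$-marginal, pass to the limit in the inner expectation via Lemma~\ref{lem.weak.convergence}, and show that replacing $X^n|_{[0,s_0]}$ by the constant $X^n(s_0)$ produces a vanishing error as $s_0\downarrow 0$.

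For the first step, the Doob identity
\[
\widehat{\mathbf P}^{(V)}_x\!\left(\tfrac{S_{[ns_0]}}{\sqrt n}\in D\right)=\frac{1}{V(x)}\mathbf E\!\left[V(x+S_{[ns_0]});\tau_x>[ns_0],\tfrac{x+S_{[ns_0]}}{\sqrt n}\in D\right]
\]
combined with $V(y)\sim\Delta(y)$ as the gaps of $y$ grow (Proposition~\ref{prop1}(d)), the tail asymptotics $\mathbf P(\tau_x>n)\sim\varkappa V(x)n^{-k(k-1)/4}$ (Theorem~\ref{T}), and the first lemma of this section (the density $\propto \Delta(w)e^{-|w|^2/2}$ for the law of $(x+S_n)/\sqrt n$ given $\tau_x>n$) shows that $X^n(s_0)$ converges weakly to the law with density proportional to $\Delta(w)^2 e^{-|w|^2/(2s_0)}$ on $W$, i.e.\ the time-$s_0$ marginal of Dyson's Brownian motion started at $0$. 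For the second step, Lemma~\ref{lem.weak.convergence} gives that $\widehat{\mathbf E}^{(V)}_{\sqrt n w}$ of any bounded continuous functional converges to $\widehat{\mathbf E}^{(\Delta)}_w$ of the same functional for each fixed $w\in W$; inspection of the coupling-based proof shows this is locally uniform in $w\in W$, while $\partial W$ is negligible for the limit marginal. Together with the Markov property of Dyson's BM under $\widehat{\mathbf P}^{(\Delta)}_0$, this yields
\[
\widehat{\mathbf E}^{(V)}_x[f(\widetilde X^n_{s_0})]\longrightarrow\widehat{\mathbf E}^{(\Delta)}_0[f(\widetilde B_{s_0})],
\]
where $\widetilde X^n_{s_0}$ (respectively $\widetilde B_{s_0}$) denotes the path obtained from $X^n$ (respectively $B$) by replacing the $[0,s_0]$-segment by its endpoint value at time $s_0$.

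The remaining and main obstacle is the tightness estimate at the origin,
\[
\lim_{s_0\downarrow 0}\limsup_{n\to\infty}\widehat{\mathbf P}^{(V)}_x\!\left(\sup_{t\le s_0}|X^n(t)|>\eta\right)=0,\qquad\eta>0,
\]
which controls the error introduced by replacing $X^n|_{[0,s_0]}$ by its endpoint; the corresponding continuity for $B$ at $0$ under $\widehat{\mathbf P}^{(\Delta)}_0$ is standard. Rewriting via the Doob transform and bounding $V\le C\Delta_1$ from Proposition~\ref{prop1}(c), the probability is majorised by $V(x)^{-1}\mathbf E[\Delta_1(x+S_{[ns_0]});\tau_x>[ns_0],\sup_{j\le[ns_0]}|x+S_j|>\eta\sqrt n]$. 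Splitting according to whether $\max_{i\le[ns_0],r\le k}|\xi_i^{(r)}|$ exceeds $n^{1/2-\delta}$ or not, and combining Doob's maximal inequality with the submartingale bound of Lemmas~\ref{lem-1}--\ref{lem3} in the bounded-jumps case and the Fuk--Nagaev estimates underlying Lemma~\ref{lem0} in the complementary case, should yield an upper bound of the form $C(x)s_0^\gamma+o(1)$ for some $\gamma>0$. Sending $n\to\infty$ and then $s_0\downarrow 0$, and combining with the preceding step, gives $\widehat{\mathbf E}^{(V)}_x[f(X^n)]\to\widehat{\mathbf E}^{(\Delta)}_0[f(B)]$, which completes the proof.
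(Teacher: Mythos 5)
Your overall architecture is different from the paper's and is viable in principle: you condition at a fixed macroscopic time $[ns_0]$ and let $s_0\downarrow 0$ at the end, whereas the paper conditions at the random time $\nu_n$ (the entrance time into $W_{n,\varepsilon}$). The paper's choice is what makes its proof short: on the main event one has $\nu_n\le n^{1-\varepsilon}$ and $|S_{\nu_n}|\le\theta_n\sqrt n$, so the initial segment of the path is automatically negligible in $C[0,1]$ and no tightness-at-the-origin estimate is ever needed; the entrance-law convergence is then delegated to O'Connell--Yor (Dyson BM started from $y/\sqrt n\to 0$ converges to Dyson BM from $0$), and the normalising expectation converges by Lemma~\ref{lem11} together with (\ref{T1.7}). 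Your route buys a cleaner probabilistic picture (Markov property at a deterministic time, identification of the entrance law as the $\Delta(w)^2e^{-|w|^2/(2s_0)}$ marginal), but it pays for it by making the modulus of continuity at $t=0$ the central difficulty rather than a non-issue.

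That is where the genuine gap lies. The estimate
$\lim_{s_0\downarrow0}\limsup_n\widehat{\mathbf P}^{(V)}_x(\sup_{t\le s_0}|X^n(t)|>\eta)=0$
amounts to showing that
$V(x)^{-1}\mathbf E[V(x+S_m);\tau_x>m,\max_{j\le m}|x+S_j|>R\sqrt m]$
is small uniformly in $m$ for $R=\eta/\sqrt{s_0}$ large, i.e.\ a uniform integrability statement for the martingale $V(x+S_j)\mathbf 1\{\tau_x>j\}$ over the event that the path exits a ball of radius $R\sqrt m$. None of the lemmas you cite covers this regime: (\ref{T1.7}) treats the threshold $n^{1/2+2\delta}$ (a higher power of $n$, where Fuk--Nagaev gives superpolynomial decay that beats the polynomial size of $\Delta$), and Lemmas~\ref{lem-1}--\ref{lem3} control expectations without any localisation of the maximum. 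With threshold $R\sqrt m$ the competition is between a power decay in $R$ and the polynomial weight $\Delta_1(x+S_\sigma)$ evaluated at the (possibly large, because of a big jump) overshoot at the exit time $\sigma$; making this work requires redoing the big-jump/exchangeability analysis of the proof of (\ref{T1.7}) with the new threshold, which you have not done — ``should yield $C(x)s_0^\gamma+o(1)$'' is precisely the step that needs a proof. The same uniform integrability is silently used a second time in your first step: the weak convergence of the conditional law $\mathbf P((x+S_m)/\sqrt m\in\cdot\mid\tau_x>m)$ does not by itself identify the limit of $\widehat{\mathbf P}^{(V)}_x(X^n(s_0)\in\cdot)$, because the Doob weight $V(y)\asymp\Delta(y)$ is unbounded; without the no-mass-escape estimate you cannot conclude that the limiting marginal is the full probability measure $\propto\Delta(w)^2e^{-|w|^2/(2s_0)}$. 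So either supply the uniform-in-$m$ version of (\ref{T1.7}) with threshold $R\sqrt m$, or switch to the paper's decomposition at $\nu_n$, which sidesteps the issue entirely.
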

\begin{proof}
As in the proof of the previous lemma, wee show the convergence on $C[0,1]$ only.
It sufficient to show that  for every function $f:0\le f\le 1$ uniformly 
continuous on $C[0,1]$,
$$
\mathbf {\widehat E}_{x}^{(V)} f(X^n)\to 
\mathbf {\widehat E}_{0} f(B)\quad\text{as } n\to\infty.
$$

First,
\begin{align*}
\mathbf {\widehat E}_{x}^{(V)} [f(X^n)]
=
\mathbf {\widehat E}_{x}^{(V)}[ f(X^n),\nu_n\le n^{1-\varepsilon}]
+
\mathbf {\widehat E}_{x}^{(V)}[ f(X^n),\nu_n>n^{1-\varepsilon}].
\end{align*}
The second term 
\begin{align*}
\mathbf {\widehat E}_{x}^{(V)}[ f(X^n),\nu_n>n^{1-\varepsilon}]
&\le 
\mathbf {\widehat P}_{x}^{(V)}(\nu_n>n^{1-\varepsilon})
=\frac{\mathbf E[ V(x+S_n);\tau_x>\nu_n,\nu_n>n^{1-\varepsilon}]}{V(x)}
\\
&\le 
C \frac{\mathbf E[ \Delta_1(x+S_n);\tau_x>\nu_n,\nu_n>n^{1-\varepsilon}]}{V(x)}\to 0,
\end{align*}
where the latter convergence follows from Lemma~\ref{lem2}. Next,
\begin{align*}
\mathbf {\widehat E}_{x}^{(V)}[ f(X^n);\nu_n\le n^{1-\varepsilon}]
&=
\mathbf {\widehat E}_{x}^{(V)}[ f(X^n);\nu_n\le n^{1-\varepsilon}, M_{\nu_n}\le \theta_n\sqrt n]
\\
&\hspace{2cm}+
\mathbf {\widehat E}_{x}^{(V)}[ f(X^n);\nu_n\le n^{1-\varepsilon}, M_{\nu_n}> \theta_n\sqrt n].
\end{align*}
Then,
\begin{align*}
 &\mathbf {\widehat E}_{x}^{(V)}[ f(X^n);\nu_n\le n^{1-\varepsilon}, M_{\nu_n}> \theta_n\sqrt n]
\le 
\mathbf {\widehat P}_{x}^{(V)}(\nu_n\le n^{1-\varepsilon}, M_{\nu_n}> \theta_n\sqrt n)
\\
&\hspace{2cm}=
\frac{\mathbf {E}(V(S_{\nu_n});\nu_n\le n^{1-\varepsilon}, M_{\nu_n}> \theta_n\sqrt n)}{V(x)}
\\
&\hspace{2cm}\le (1+o(1)) 
\frac{\mathbf {E}(\Delta(S_{\nu_n});\nu_n\le n^{1-\varepsilon}, M_{\nu_n}> \theta_n\sqrt n)}{V(x)}\to 0,
\end{align*}
by (\ref{T1.7}). These preliminary estimates give us 
\begin{align}\label{eq.preliminary}
 \mathbf {\widehat E}_{x}^{(V)} [f(X^n)]
=
\mathbf {\widehat E}_{x}^{(V)}[ f(X^n);\nu_n\le n^{1-\varepsilon}, M_{\nu_n}\le \theta_n\sqrt n]+o(1).
\end{align}
Next let 
$$
f(y,k, X^n)=f\left(\frac{y}{\sqrt n}\mathbf 1_{\{t\leq k/n\}}+X^n(t)\mathbf 1_{\{t> k/n\}}\right).
$$
It is not difficult to see that 
on the event 
$\{x+S_{\nu_n}\in dy,M_{\nu_n}\le \theta_n\sqrt n\}$, the following  holds 
$$
f(y,k, X^n)-f(X^n)=o(1)
$$ 
uniformly in $|y|\leq\theta_n\sqrt n$ and $k\leq n^{1-\varepsilon}$. 
Therefore,
\begin{align*}
&\mathbf {\widehat E}_{x}^{(V)}[ f(X^n);\nu_n\le n^{1-\varepsilon}, M_{\nu_n}\le \theta_n\sqrt n]\\
&\sim
\mathbf {\widehat E}_{x}^{(V)}[ f(S_{\nu_n},\nu_n,X^n);\nu_n\le n^{1-\varepsilon}, M_{\nu_n}\le \theta_n\sqrt n]
\\
&=\sum_{k\leq n^{1-\varepsilon}}\int_{W_{n,\varepsilon}}
\mathbf P\left(x+S_{k}\in dy,\tau_x>k,\nu_n=k, M_{\nu_n}\le \theta_n\sqrt n\right)
\frac{V(y)}{V(x)}\\
&\hspace{2cm}\times\mathbf {\widehat E}_{y}^{(V)}f\left(\frac{y}{\sqrt n}\mathbf 1_{\{t\leq k/n\}}+X^n(t-k/n)\mathbf 1_{\{t> k/n\}}\right). 
\end{align*}
Using coupling arguments from Lemma \ref{lem.weak.convergence}, one can easily get
\begin{align*}
&\mathbf{\widehat E}_{y}^{(V)}f\left(\frac{y}{\sqrt n}\mathbf 1_{\{t\leq k/n\}}+X^n(t-k/n)\mathbf 1_{\{t> k/n\}}\right)\\
&\hspace{2cm}
\sim\mathbf{\widehat E}_{y}^{(\Delta)}f\left(\frac{y}{\sqrt n}\mathbf 1_{\{t\leq k/n\}}+B^n(t-k/n)\mathbf 1_{\{t> k/n\}}\right).
\end{align*}
Using results of Section 4 of \cite{OY02}, one has
$$
\mathbf{\widehat E}_{y}^{(\Delta)}f\left(\frac{y}{\sqrt n}\mathbf 1_{\{t\leq k/n\}}+B^n(t-k/n)\mathbf 1_{\{t> k/n\}}\right)
\sim \mathbf{\widehat E}_{0}^{(\Delta)}[f(B)].
$$
Consequently,
\begin{align*}
&\mathbf {\widehat E}_{x}^{(V)}[ f(X^n);\nu_n\le n^{1-\varepsilon}, M_{\nu_n}\le \theta_n\sqrt n]\\
&\sim\mathbf{\widehat E}_{0}^{(\Delta)}[f(B)]
\frac{\mathbf {E}[V(x+S_{\nu_n});\tau_x>\nu_n, \nu_n\le n^{1-\varepsilon}, M_{\nu_n}\le \theta_n\sqrt n]}{V(x)}\\
&\sim\mathbf{\widehat E}_{0}^{(\Delta)}[f(B)]
\frac{\mathbf {E}[\Delta(x+S_{\nu_n});\tau_x>\nu_n, \nu_n\le n^{1-\varepsilon}, M_{\nu_n}\le \theta_n\sqrt n]}{V(x)}.
\end{align*}
Using now Lemma \ref{lem11} and relation (\ref{T1.7}), we get finally
$$
\mathbf {\widehat E}_{x}^{(V)}[ f(X^n);\nu_n\le n^{1-\varepsilon}, M_{\nu_n}\le \theta_n\sqrt n]
\sim\mathbf{\widehat E}_{0}^{(\Delta)}[f(B)].
$$
Combining this with (\ref{eq.preliminary}), we complete the proof of the lemma.
\end{proof}

{\bf Acknowledgement} The research of Denis Denisov was supported by EPSRC grant No.~EP/E033717/1. 
This work was carried out during the visits of the first author to Technische Universit{\"a}t M{\"u}nchen and Ludwig-Maximilians-Universit{\"a}t M{\"u}nchen and the visit of the second author to Heriot-Watt University. Both authors are grateful to the above-mentioned institutions for their hospitality.


\end{document}